\title{Cameron-Liebler $k$-sets in $\text{AG}(n,q)$}
\begin{document}

\theoremstyle{plain}
\newtheorem{St}{Theorem}[section]
\newtheorem{theorem}[St]{Theorem}
\newtheorem{Le}[St]{Lemma}
\newtheorem{Gev}[St]{Corollary}
\theoremstyle{definition}
\newtheorem{Ex}[St]{Example}
\newtheorem{Def}[St]{Definition}
\newtheorem{Opm}[St]{Remark}
\newtheorem{Cons}[St]{Construction}
\newtheorem*{Consb}{Construction}

\newcommand{\AG}{\text{AG}}
\newcommand{\PG}{\text{PG}}
\newcommand{\PGL}{\text{PGL}}
\newcommand{\AGL}{\text{AGL}}
\newcommand{\cL}{\mathcal{L}}
\newcommand{\gauss}[2]{\genfrac{[}{]}{0pt}{}{#1}{#2}}
\renewcommand{\thefootnote}{\fnsymbol{footnote}}

 \footnotetext{$*$ Department of Mathematics: Analysis, Logic and Discrete Mathematics, Ghent University, Krijgslaan 281, Building S8, 9000 Gent, Flanders, Belgium (Email: jozefien.dhaeseleer@ugent.be, ferdinand.ihringer@ugent.be, leo.storme@ugent.be) \newline (http://cage.ugent.be/$\sim$ jmdhaese/, http://math.ihringer.org/, http://cage.ugent.be/$\sim$ls)\label{Gent}.}%
    \footnotetext{$\dagger$ Department of Mathematics and Data Science, University of Brussels (VUB),  Pleinlaan 2, Building G, 1050 Elsene, Flanders, Belgium  (Email: Jonathan.Mannaert@vub.be)\newline(http://homepages.vub.ac.be/$\sim$jonmanna/)\label{VUB}.}%

\author{J. D'haeseleer\footnotemark\label{Gent}, F.Ihringer$^*$, J. Mannaert\footnotemark{\label{VUB}} and L. Storme$^*$}

\maketitle

\begin{abstract}
        We study Cameron-Liebler $k$-sets in the affine geometry, so sets of $k$-spaces in $\AG(n, q)$. This generalizes research on Cameron-Liebler $k$-sets in the projective geometry $\PG(n, q)$.
        Note that in algebraic combinatorics, Cameron-Liebler $k$-sets of $\AG(n, q)$ correspond to certain equitable bipartitions of the association
        scheme of $k$-spaces in $\AG(n, q)$, while in the analysis of Boolean functions, they correspond to Boolean degree $1$ functions of $\AG(n, q)$. 
        
        We define Cameron-Liebler $k$-sets in $\AG(n, q)$ { in a similar way as Cameron-Liebler $k$-sets in $\PG(n,q)$, such that its characteristic vector is a linear combination of point-pencils.}
        In particular, we investigate the relationship between Cameron-Liebler $k$-sets in $\AG(n, q)$ and $\PG(n, q)$.
        As a by-product, we calculate the character table of the association scheme of affine lines.
        Furthermore, we characterize the smallest examples of Cameron-Liebler $k$-sets.
        
        This paper focuses on $\AG(n, q)$ for $n > 3$, while the case for Cameron-Liebler line classes in $\AG(3, q)$ was already treated separately.
\end{abstract}

\section{Introduction}

The investigation of Cameron-Liebler line classes in the projective geometry $\PG(n, q)$ 
goes back to Cameron and Liebler in 1982 \cite{Cameron-Liebler}. 
Their motivation was the investigation of the subgroup structure of $\PGL(n+1, q)$.
Particularly, a line orbit of a subgroup of $\PGL(n+1, q)$ acting on $\PG(n, q)$ with the same number of point- and line-orbits is 
a Cameron-Liebler line class. 

The concept of Cameron-Liebler line classes was rediscovered several times, see the introduction of \cite{Delsarte77} for a short overview.
In particular, algebraic combinatorialists studied equitable bipartitions as a natural generalization of perfect codes 
under various names in several highly symmetric families of graphs such as hypercubes and Johnson graphs. Similarly,
they also correspond to Boolean degree 1 functions in the analysis of Boolean functions.

In the special case of $\PG(3, q)$, a Cameron-Liebler line class can be defined as 
a family of lines which intersects all spreads of $\PG(3, q)$ in exactly $x$ lines for some constant $x$ \cite{Cameron-Liebler}.
We call $x$ the \textit{parameter} of the Cameron-Liebler line class.
In $\PG(3, q)$, there exists a list of examples which we refer to as \textit{trivial}: (1) the empty set
with parameter $x=0$, (2) all lines through a fixed point with parameter $x=1$, 
(3) all lines in a fixed plane with parameter $x=1$, (4) the union of (2) and (3), when disjoint, with parameter $x=2$,
and (5)--(8) the complements of (1)--(4) with parameters $x=q^2+1, q^2, q^2, q^2-1$.
Cameron and Liebler conjectured that these are the only examples. This was disproven by Drudge who found an example with parameter $x=5$ in PG($3,3$) \cite{Drudge}. Nowadays there are several infinite families of non-trivial examples known \cite{BruenAndDrude,DeBeule2016,Feng2015,Gavrilyuk2018}.
In contrast to this, there are no non-trivial examples known for $n > 3$. Hence,
there is some difference in behaviour between $n=3$ and $n > 3$.
This carries over to $\AG(n, q)$, where this paper handles the case $n>3$, while we treat the case $n=3$ separately in \cite{DMSS}.

Cameron-Liebler line classes were generalized to $k$-spaces of $\PG(n, q)$ 
in \cite{DBD2018,Delsarte77}. These are families of $k$-spaces which lie in the span of the point-($k$-space) incidence matrix.
We call such families \textit{Cameron-Liebler $k$-sets of $\PG(n, q)$}.
Note that if $\cL$ is a Cameron-Liebler $k$-set of $\PG(n, q)$, then its parameter $x$ is defined by $|\cL| = x \gauss{n}{k}_q$ where
$\gauss{n}{k}_q$ denotes the  Gaussian binomial coefficient.
Analogously, we call a family $\mathcal{L}$ of $k$-spaces which lies in the span of the (affine) point-($k$-space) incidence matrix  a \emph{Cameron-Liebler $k$-set of $\AG(n, q)$}. Moreover, if $|\cL| = x \gauss{n}{k}_q$, we say that $x$ is the parameter of the Cameron-Liebler $k$-set $\cL$.

After some preliminaries, we start our paper with some general properties of Cameron-Liebler $k$-sets in Section \ref{sec:AGgen}.
In particular, we show the following.

\begin{St}\label{H6ProjToAff}
Let $\cL$ be a Cameron-Liebler $k$-set with parameter $x$ in PG($n,q$) which does not contain $k$-spaces in some hyperplane $H$. 
Then $\cL$ is a Cameron-Liebler $k$-set with parameter $x$ of $\AG(n, q) \cong \PG(n, q) \setminus H$.
\end{St}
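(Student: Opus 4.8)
The plan is to transfer the projective pencil-representation of $\chi_\cL$ to the affine setting by simply restricting coordinates. I identify each affine $k$-space of $\AG(n,q)\cong\PG(n,q)\setminus H$ with the unique $k$-space of $\PG(n,q)$ spanning it, so that the $k$-spaces of $\PG(n,q)$ not contained in $H$ correspond bijectively to the $k$-spaces of $\AG(n,q)$. Since by hypothesis $\cL$ contains no $k$-space inside $H$, this identifies $\cL$ with a set of affine $k$-spaces of the same cardinality $|\cL|$. As the parameter is in both geometries defined by $|\cL| = x\gauss{n}{k}_q$ with the same Gaussian binomial coefficient, the parameter will be preserved automatically once $\cL$ is shown to be a Cameron--Liebler $k$-set of $\AG(n,q)$; hence the entire content is to prove that the characteristic vector of $\cL$, read off on the affine $k$-spaces, lies in the span of the affine point-pencils.

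First I would write $\chi_\cL = \sum_P c_P \chi_P$ as a linear combination of projective point-pencils, where $P$ runs over all points of $\PG(n,q)$ and $\chi_P$ denotes the characteristic vector of the $k$-spaces through $P$. Restricting this identity to the coordinates indexed by the affine $k$-spaces, the left-hand side becomes the affine characteristic vector of $\cL$. For an affine point $P\notin H$, the restriction of $\chi_P$ is exactly the affine point-pencil through $P$, because a $k$-space not contained in $H$ passes through $P$ if and only if its affine part does. The only terms that are not visibly affine point-pencils are those coming from the points at infinity $P\in H$, and controlling these is the heart of the argument.

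The key step is to exploit the hypothesis that $\cL$ contains no $k$-space of $H$. Evaluating $\chi_\cL = \sum_P c_P\chi_P$ at a $k$-space $\sigma\subseteq H$ yields $0 = \sum_{P\in\sigma}c_P$, so the restriction of the coefficient vector to the points of $H$ satisfies $\sum_{P\in\sigma}c_P = 0$ for every $k$-space $\sigma$ of $H\cong\PG(n-1,q)$; in other words it lies in the left kernel of the point-$k$-space incidence matrix $N_H$ of $\PG(n-1,q)$. I would then show this left kernel is trivial: the Gram matrix $N_H N_H^{\mathsf{T}}$ equals $(\mu_1-\mu_2)I + \mu_2 J$, where $\mu_1$ and $\mu_2$ count the $k$-spaces of $\PG(n-1,q)$ through one, respectively two, fixed points, and both eigenvalues $\mu_1-\mu_2$ and $\mu_1-\mu_2+\mu_2\cdot\tfrac{q^{n}-1}{q-1}$ are strictly positive. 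Hence $N_H$ has full row rank and $c_P = 0$ for all $P\in H$. Consequently every point-at-infinity term disappears, and the restricted identity expresses the affine characteristic vector of $\cL$ as a combination of affine point-pencils, which is the claim.

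The step I expect to be the main obstacle is the non-degeneracy of $N_H$: the eigenvalue computation requires $\mu_1>\mu_2$, which holds throughout the range $1\le k\le n-2$ but breaks down in the extreme case $k=n-1$. There the only $k$-space inside $H$ is $H$ itself, the single equation $\sum_{P\in H}c_P=0$ no longer forces the coefficients on $H$ to vanish, and indeed the projective point-$k$-space incidence matrix becomes square and invertible, so the projective notion degenerates. For the linear-algebra argument I would therefore restrict to $k\le n-2$ and treat $k=n-1$ by separate, more combinatorial means, for instance by analysing the surviving contribution, which depends only on the direction $\overline{\pi}\cap H$ of each affine $k$-space $\pi$, against the all-ones vector $q^{-k}\sum_{P\notin H}\chi_P$ that already lies in the affine pencil span. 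Pinning down this boundary value of $k$ is where I expect the real care to be needed.
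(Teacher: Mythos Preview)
Your argument is correct and amounts to a self-contained proof of what the paper handles by citation. The paper's proof simply invokes the block decomposition of Construction~\ref{IncidenceMatrix} and then defers to the black-box Theorem~\ref{chiUpToDown} (established in \cite{DMSS}) to pass from $(\ker P_n)^\perp$ to $(\ker A_n)^\perp$. You make this passage explicit: writing $\chi_\cL=\sum_P c_P\chi_P$, you observe that the vanishing of $\chi_\cL$ on all $k$-spaces inside $H$ forces $(c_P)_{P\in H}$ into the left kernel of the point--$k$-space incidence matrix of $H\cong\PG(n-1,q)$, and your Gram-matrix computation $N_H N_H^{\mathsf T}=(\mu_1-\mu_2)I+\mu_2 J$ with $\mu_1-\mu_2=q^k\gauss{n-2}{k}_q>0$ shows this kernel is trivial for $k\le n-2$. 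This full-rank fact is exactly what the paper records (for lines) as Theorem~\ref{FullRankIncidence}; the extension to $k$-spaces is standard. So the two routes differ only in packaging: the paper cites the kernel-transfer as a known lemma, while you supply its mechanism.

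Your caution about $k=n-1$ is well placed, but your proposed rescue does not work and cannot: the residual term $\sum_{P\in H}c_P\,\chi_P|_{\text{aff}}$ is constant on each parallel class of affine hyperplanes, hence lies in the span of the type~II spreads rather than in $\text{Im}(A_n^T)$, and subtracting a multiple of the all-ones vector does not change this. Indeed the statement is genuinely false for $k=n-1$: since $P_n$ is then square and invertible, \emph{every} set of projective hyperplanes avoiding $H$ is a Cameron--Liebler set in $\PG(n,q)$, whereas Section~\ref{sec:hyp} shows that affine Cameron--Liebler hyperplane sets must meet every parallel class in a constant number of elements. The paper sidesteps this by treating hyperplanes separately, and the working hypothesis in the main development is $n\ge 2k+1$; your restriction to $k\le n-2$ is therefore exactly the right scope.
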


%
%
%
%
We also obtained a result on the converse of Theorem \ref{H6ProjToAff}. In particular, we show the following.

\begin{St}\label{CLkSpaceBasic_plus_lines}\label{CLkSpaceBasic}
    If $\cL$ is a Cameron-Liebler $k$-set of $\AG(n, q)$ with parameter $x$, then $\cL$ is a Cameron-Liebler $k$-set of $\PG(n, q)$
    with parameter $x$ in the projective closure $\PG(n, q)$ of $\AG(n, q)$.
\end{St}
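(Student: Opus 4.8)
The plan is to give a direct embedding argument, mirroring the forward direction (Theorem \ref{H6ProjToAff}) but in the easier reading. Write $\PG(n,q) = \AG(n,q) \cup H$ with $H$ the hyperplane at infinity, so that the affine points are the points of $\PG(n,q)$ outside $H$ and the affine $k$-spaces are exactly the $k$-spaces of $\PG(n,q)$ not contained in $H$. By definition, $\cL$ being a Cameron-Liebler $k$-set of $\AG(n,q)$ says that its characteristic vector, indexed by the affine $k$-spaces, lies in the span of the rows of the affine point-($k$-space) incidence matrix; that is, $\chi_{\cL} = \sum_P a_P v_P$, where $P$ runs over the affine points and $v_P$ is the incidence vector of the affine $k$-spaces through $P$.

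First I would record the one observation that drives everything: for an affine point $P$, every $k$-space of $\PG(n,q)$ through $P$ fails to lie in $H$ (as $P \notin H$) and is therefore an affine $k$-space. Hence the set of projective $k$-spaces through $P$ is identical to the set of affine $k$-spaces through $P$. Embedding the functions on affine $k$-spaces into the functions on all projective $k$-spaces by padding with zeros on the $k$-spaces contained in $H$, this observation says precisely that the affine pencil $v_P$ is sent to the full projective point-pencil $w_P$ at $P$.

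Next I would apply this embedding to the identity $\chi_{\cL} = \sum_P a_P v_P$. The right-hand side becomes $\sum_P a_P w_P$, a linear combination of projective point-pencils, hence an element of the row span of the projective point-($k$-space) incidence matrix. The left-hand side becomes the characteristic vector of $\cL$ viewed as a set of projective $k$-spaces: since $\cL$ consists only of affine $k$-spaces, this projective characteristic vector is already zero on every $k$-space contained in $H$, exactly the coordinates that the embedding fills with zeros. Matching the two sides then shows that the projective characteristic vector of $\cL$ is a combination of point-pencils, i.e.\ $\cL$ is a Cameron-Liebler $k$-set of $\PG(n,q)$. For the parameter, note that $\cL$ is literally the same family of $k$-spaces in both geometries, so $|\cL|$ is unchanged; as the parameter is defined in each setting by $|\cL| = x\,\gauss{n}{k}_q$, the projective parameter is again $x$.

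I do not expect a genuine obstacle here; the direction is strictly easier than its converse precisely because only pencils centered at affine points occur. The single point deserving care is the coordinate match on the $k$-spaces inside $H$: one must confirm that the lifted combination $\sum_P a_P w_P$ vanishes there, which is immediate since each $w_P$ with $P$ affine is supported on affine $k$-spaces, and that this agrees with $\chi_{\cL}$ being zero on those coordinates, which uses the hypothesis that $\cL$ contains no $k$-space of $H$. By contrast, a restriction argument in the reverse direction would force one to re-express pencils centered at points of $H$ through affine pencils, the step that makes Theorem \ref{H6ProjToAff} the substantive half.
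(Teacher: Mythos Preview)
Your argument is correct and is essentially the same as the paper's proof. The paper phrases the key step via the block decomposition of Construction~\ref{IncidenceMatrix}, namely $P_n=\begin{pmatrix} A_n & \bar{0}\\ B_2 & P_{n-1}\end{pmatrix}$, and observes that padding $\chi_\cL\in\mathrm{Im}(A_n^T)$ with zeros on the $k$-spaces of $\pi_\infty$ lands in $\mathrm{Im}(P_n^T)$; your pencil formulation is exactly the row-by-row reading of that block identity.
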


 We prove this theorem in general, but for the case $k=1$ we give an alternative proof using the character table of the association scheme of affine lines.
This association scheme has been investigated before as it is a well-known 3-class association scheme, see \cite{VanDam2000} for a more detailed study.
We could not find the character table of the affine lines scheme in the literature, so we provide the latter in Section \ref{sec:assoc}. While for $\PG(n, q)$ the character tables of the association scheme 
of $k$-spaces is explicitly known due to Delsarte \cite{Delsarte1976} and Eisfeld \cite{Eisfeld1999}, the determination of the character tables of the association scheme of $k$-spaces in $\AG(n, q)$
is still open.

A 3-class association scheme has four common eigenspaces $V_0, V_1, V_2, V_3$, where $V_0$ is spanned
by the all-ones vector. In our ordering, we provide explicit bases for $V_0 + V_1$ and $V_0 + V_3$, and we give a spanning set for $V_0 + V_2 + V_3$.

An immediate consequence of Theorem \ref{CLkSpaceBasic_plus_lines} is that the following results for Cameron-Liebler $k$-sets 
of $\PG(n, q)$ are also valid for Cameron-Liebler $k$-sets of $\AG(n, q)$.


\begin{St}\cite[Theorem 4.9]{DBD2018}\label{JozefienClassific}
There are no Cameron-Liebler $k$-sets in $\AG(n,q)$, with $n\geq 3k+2$ and $q\geq 3$, of parameter 
	$$2\leq x\leq \frac{1}{\sqrt[8]{2}}q^{\frac{n}{2}-\frac{k^2}{4}-\frac{3k}{4}-\frac{3}{2}}(q-1)^{\frac{k^2}{4}-\frac{k}{4}+\frac{1}{2}}\sqrt{q^2+q+1}.$$
\end{St}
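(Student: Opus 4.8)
The plan is to reduce Theorem \ref{JozefienClassific} entirely to the already-established projective result via Theorem \ref{CLkSpaceBasic_plus_lines}. Since that theorem tells us that every Cameron-Liebler $k$-set $\cL$ of $\AG(n,q)$ with parameter $x$ is, in the projective closure $\PG(n,q)$, a Cameron-Liebler $k$-set of $\PG(n,q)$ with the \emph{same} parameter $x$, we may simply transport the hypotheses and invoke \cite[Theorem 4.9]{DBD2018} directly.

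\medskip

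First I would take a putative Cameron-Liebler $k$-set $\cL$ of $\AG(n,q)$ with $n \geq 3k+2$, $q \geq 3$, and parameter $x$ lying in the stated interval $2 \leq x \leq \frac{1}{\sqrt[8]{2}}q^{\frac{n}{2}-\frac{k^2}{4}-\frac{3k}{4}-\frac{3}{2}}(q-1)^{\frac{k^2}{4}-\frac{k}{4}+\frac{1}{2}}\sqrt{q^2+q+1}$. Next I would apply Theorem \ref{CLkSpaceBasic_plus_lines} to conclude that $\cL$ is a Cameron-Liebler $k$-set of the projective closure $\PG(n,q)$ with the same parameter $x$. The key point is that the defining interval for $x$ and the dimensional constraints $n \geq 3k+2$, $q \geq 3$ are stated identically in terms of $n$, $k$, and $q$ in both the affine theorem and \cite[Theorem 4.9]{DBD2018}; in particular the parameter is preserved under the affine-to-projective correspondence, so the same $x$ satisfies the same bounds. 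Thus $\cL$ would be a projective Cameron-Liebler $k$-set whose parameter lies in exactly the forbidden range of the cited theorem.

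\medskip

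Finally I would note that \cite[Theorem 4.9]{DBD2018} asserts that no such projective Cameron-Liebler $k$-set exists, yielding a contradiction and hence the nonexistence of $\cL$ in $\AG(n,q)$. Since the statement in the excerpt already attributes the result to \cite[Theorem 4.9]{DBD2018}, the intended proof is precisely this transfer argument rather than a fresh combinatorial computation.

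\medskip

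I expect the main (and essentially only) obstacle to be verifying that the invocation of Theorem \ref{CLkSpaceBasic_plus_lines} is clean: one must confirm that the embedding $\AG(n,q) \cong \PG(n,q) \setminus H$ does not alter the parameter and that the numerical bound carries over verbatim. Because Theorem \ref{CLkSpaceBasic_plus_lines} explicitly guarantees parameter preservation, there is no delicate estimation to perform here; the entire content of the proof is the reduction, and the quantitative heavy lifting was already done in the projective setting in \cite{DBD2018}. Hence the result for $\AG(n,q)$ follows as an immediate corollary of Theorem \ref{CLkSpaceBasic_plus_lines} combined with the projective classification.
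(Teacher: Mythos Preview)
Your proposal is correct and matches the paper's approach exactly: the paper presents this theorem as ``an immediate consequence of Theorem \ref{CLkSpaceBasic_plus_lines}'' applied to \cite[Theorem 4.9]{DBD2018}, with no further argument given. The transfer of the affine Cameron-Liebler $k$-set to the projective closure with the same parameter $x$, followed by invocation of the projective nonexistence result, is precisely the intended proof.
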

 The statement in \cite[Theorem 4.9]{DBD2018} is slightly different and its proof currently contains an error. 
The authors of \cite{DBD2018} have submitted an erratum. The version in this article is based on the recent corrected version on arXiv.\footnote{See arXiv:1805.09539v3 [math.CO]}
For $n \geq \frac52 k + \frac32$, a similar bound is given in \cite[Theorem 7]{Ihringer2020}.
For $n = 2k+1$, there is a better bound.

\begin{theorem}\cite[(Metsch, Theorem 1.4)]{Metsch2017}
 For $k\geq 3$, 
 there are no non-trivial Cameron-Liebler $k$-sets with parameter $x$ in $\AG(2k+1, q)$ for $2 \leq x \leq q/5$ and $q \geq q_0$
 for some universal constant $q_0$.
\end{theorem}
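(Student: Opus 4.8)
The plan is to obtain this statement as a direct corollary of the corresponding projective result of Metsch \cite{Metsch2017} by means of the transfer result Theorem \ref{CLkSpaceBasic_plus_lines}. First I would take an arbitrary Cameron-Liebler $k$-set $\cL$ of $\AG(2k+1, q)$ with parameter $x$, where $2 \le x \le q/5$ and $q \ge q_0$. By Theorem \ref{CLkSpaceBasic}, $\cL$ is then a Cameron-Liebler $k$-set of the projective closure $\PG(2k+1, q)$ with the same parameter $x$. Since the members of $\cL$ are affine $k$-spaces, none of them is contained in the hyperplane at infinity $H_\infty$, so, viewed projectively, $\cL$ is a Cameron-Liebler $k$-set that avoids every $k$-space lying inside $H_\infty$.

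The second step is to invoke Metsch's theorem in its original projective formulation: for $k \ge 3$, $q \ge q_0$, and $2 \le x \le q/5$, every Cameron-Liebler $k$-set of $\PG(2k+1, q)$ with parameter $x$ is trivial. Applied to $\cL$, this forces $\cL$ to be a trivial Cameron-Liebler $k$-set of $\PG(2k+1, q)$. It then remains to transport the word \emph{trivial} back to the affine setting. I would run through the list of trivial projective examples of parameter $x \in [2, q/5]$ (built from point-pencils, dual point-pencils in a fixed hyperplane, and their complements) and record which of them contain no $k$-space inside $H_\infty$: a point-pencil with vertex an affine point avoids $H_\infty$ automatically, whereas a dual point-pencil in a hyperplane, or a point-pencil with vertex on $H_\infty$, always meets $H_\infty$. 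Matching the surviving configurations against the affine trivial examples introduced earlier in the paper shows that $\cL$ is a trivial Cameron-Liebler $k$-set of $\AG(2k+1, q)$, which is exactly the claim.

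The transfer of the first step and the citation of Metsch in the second step are essentially immediate once Theorem \ref{CLkSpaceBasic} is at hand; I expect the only genuine work to lie in the final bookkeeping. Concretely, the obstacle is to verify that the projective--affine dictionary for \emph{trivial} is exhaustive, i.e.\ that every trivial projective Cameron-Liebler $k$-set avoiding a fixed hyperplane corresponds to a trivial affine one and conversely, so that the property \emph{non-trivial} is preserved under the correspondence in both directions.
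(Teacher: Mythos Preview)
Your approach is exactly the one the paper takes: the paper does not give a separate proof of this theorem but simply states it as an ``immediate consequence of Theorem~\ref{CLkSpaceBasic_plus_lines}'', i.e.\ it transfers Metsch's projective result to $\AG(2k+1,q)$ via the embedding of affine Cameron--Liebler $k$-sets into projective ones. Your additional bookkeeping about which trivial projective examples survive the restriction to $\AG$ is more careful than the paper itself, which glosses over this point; in fact the paper handles the border case $x=2$ independently later (Corollary~\ref{No2}), so your concern about the ``trivial'' dictionary is legitimate but not something the paper addresses at this stage.
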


Note that \cite[Theorem 1.4]{Metsch2017} requires that $k < q \log q - q - 1$. This condition can be removed, see \cite[Theorem 1.8]{Ihringer2019}. Complementary to the two previous results, Theorem \ref{H6ProjToAff} also implies that the situation is known for small $q$.

\begin{theorem}{\cite[Theorem 1.4]{BDF}}
  Let $n \geq 2k+1 > 3$. 
Then there are no non-trivial Cameron-Liebler $k$-sets in $\AG(n, q)$ for $q \leq 5$.
\end{theorem}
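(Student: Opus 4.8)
The plan is to deduce the affine classification from the projective one, which for $q \le 5$ is already available. Let $\cL$ be a Cameron-Liebler $k$-set of $\AG(n,q)$ with parameter $x$, viewed as a set of $k$-spaces of the projective closure $\PG(n,q)$ none of which is contained in the hyperplane at infinity $H_\infty$. By Theorem~\ref{CLkSpaceBasic}, $\cL$ is then a Cameron-Liebler $k$-set of $\PG(n,q)$, again with parameter $x$. Since $n \ge 2k+1 > 3$ and $q \le 5$, the corresponding projective classification \cite[Theorem 1.4]{BDF} applies and forces $\cL$ to coincide with one of the trivial Cameron-Liebler $k$-sets of $\PG(n,q)$.

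It remains to translate this back to the affine world. First I would fix the list of trivial projective examples (the empty set, a point-pencil consisting of all $k$-spaces through a fixed point $p$, the set of all $k$-spaces contained in a fixed hyperplane, together with their disjoint unions and complements), and then ask which of them contain no $k$-space inside $H_\infty$, since $\cL$ has exactly this property. A point-pencil through $p$ avoids $H_\infty$ precisely when $p$ is an affine point, and in that case it is exactly the affine point-pencil through $p$; if $p \in H_\infty$ it contains $k$-spaces of $H_\infty$ and is excluded. The hypothesis $n \ge 2k+1$ enters here: it guarantees that both a hyperplane $H$ and the $(n-2)$-space $H \cap H_\infty$ still contain $k$-spaces (as $k \le n-2$), so that ``all $k$-spaces in a hyperplane'', and a fortiori the full set, always meet $H_\infty$ and are excluded. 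Matching each surviving configuration with its affine meaning, using Theorem~\ref{H6ProjToAff} to certify that a trivial projective example avoiding $H_\infty$ indeed restricts to a genuine affine Cameron-Liebler $k$-set, then shows that $\cL$ is a trivial affine Cameron-Liebler $k$-set.

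The hard part is not any single computation but this final bookkeeping: one must check that the trivial projective Cameron-Liebler $k$-sets which happen to avoid $H_\infty$ are in parameter-preserving correspondence with the trivial affine Cameron-Liebler $k$-sets, taking care that complements and disjoint unions are handled consistently. For instance, the complement of an affine point-pencil arises projectively only after first removing the $k$-spaces of $H_\infty$, and not as the naive projective complement, which would reintroduce $k$-spaces inside $H_\infty$. Once this correspondence is pinned down, the theorem follows at once from the projective input \cite[Theorem 1.4]{BDF} together with Theorems~\ref{CLkSpaceBasic} and~\ref{H6ProjToAff}.
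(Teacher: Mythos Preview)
Your proposal is correct and follows essentially the same approach as the paper. The paper's argument (given in Section~\ref{sec:class}) is the one-line combination of Theorem~\ref{CLkSpaceBasic_plus_lines}, Theorem~\ref{Non-ExBD1} (the projective classification from \cite{BDF}), and Lemma~\ref{x=1Inn-1}; your bookkeeping step, where you inspect by hand which trivial projective examples avoid $H_\infty$, is precisely what Lemma~\ref{x=1Inn-1} accomplishes in the paper (any non-empty set of $k$-spaces lying inside a hyperplane fails to be an affine Cameron-Liebler $k$-set, so the hyperplane-type trivial examples and their contributions to unions and complements are ruled out at once).
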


We conclude with Section \ref{sec:class}, where we obtain a classification of the smallest Cameron-Liebler $k$-sets of $\AG(n, q)$.

\begin{theorem}\label{Smallx}
  All Cameron-Liebler $k$-sets of $\AG(n, q)$ with parameter $x \leq 2$ are trivial.
\end{theorem}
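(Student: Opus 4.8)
The plan is to deduce this affine classification from the known classification of the smallest Cameron--Liebler $k$-sets in projective space, with Theorem~\ref{CLkSpaceBasic} serving as the bridge. First I would dispose of the degenerate case $x=0$ directly: then $|\cL| = 0 \cdot \gauss{n}{k}_q = 0$, so $\cL = \emptyset$, which is trivial. Hence assume $1 \le x \le 2$, and let $\PG(n,q)$ be the projective closure of $\AG(n,q)$ with hyperplane at infinity $H_\infty$. By Theorem~\ref{CLkSpaceBasic}, the family $\overline{\cL}$ of projective completions of the members of $\cL$ is a Cameron--Liebler $k$-set of $\PG(n,q)$ with the same parameter $x$. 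The essential extra feature is that every element of $\overline{\cL}$ contains an affine point and hence is not contained in $H_\infty$; thus $\overline{\cL}$ contains no $k$-space lying inside $H_\infty$.

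Next I would invoke the classification of the smallest Cameron--Liebler $k$-sets of $\PG(n,q)$, namely that such a set with $x \le 2$ is a point-pencil (all $k$-spaces through a fixed point), a \emph{dual} pencil (all $k$-spaces inside a fixed hyperplane), or a disjoint union of two such objects, the latter accounting for $x=2$. The core of the argument is to intersect this list with the constraint that $\overline{\cL}$ avoids every $k$-space inside $H_\infty$. For this I would record three positivity counts, all valid in the range $1 \le k \le n-2$ with $n > 3$. First, a dual pencil on a hyperplane $\pi$ always contains $k$-spaces inside $H_\infty$, since $\pi \cap H_\infty$ has projective dimension at least $n-2 \ge k$ and hence already contains $\gauss{n-1}{k+1}_q > 0$ of them; consequently $\overline{\cL}$ can have no dual-pencil constituent. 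Second, a point-pencil centred at a point $P \in H_\infty$ contains the $\gauss{n-1}{k}_q > 0$ $k$-spaces through $P$ that lie in $H_\infty$, so any point-pencil constituent must be centred at an affine point. Third, two distinct point-pencils are never disjoint, since every $k$-space through the line $P_1 P_2$ belongs to both, and there are $\gauss{n-1}{k-1}_q > 0$ of these.

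Combining these three facts finishes the argument: the dual-pencil possibility is excluded outright, so $\overline{\cL}$ is either a single point-pencil ($x=1$) or a disjoint union of two point-pencils ($x=2$), but the latter is impossible because two point-pencils always meet. Hence $x=1$ and $\overline{\cL}$ is a point-pencil whose centre must be an affine point $P$; restricting to $\AG(n,q)$ shows that $\cL$ is exactly the pencil of all affine $k$-spaces through $P$, a trivial Cameron--Liebler $k$-set of parameter $1$. I expect the main obstacle to lie not in this bookkeeping but in securing the exact statement of the projective classification in the full range of $(n,q,k)$ claimed here, and in treating the boundary case $k = n-1$ separately: there the dual pencils degenerate to single hyperplanes and the complement of the empty set, respectively of a point-pencil, can itself have parameter at most $2$ when $q^{\,n-k}$ is small, so these complementary trivial families must be incorporated into the final list rather than excluded.
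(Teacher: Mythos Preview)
Your argument has a genuine gap at its core: the projective classification you invoke for $x=2$ does not exist in the literature in the generality you need. What is known for $\PG(n,q)$ is the $x=1$ classification (Theorem~\ref{CharX=1PG(n,q)}, requiring $n\ge 2k+1$) together with nonexistence results for $x=2$ that only cover $n\ge 3k+2$ with $q\ge 3$ (Theorem~\ref{JozefienClassific}) or $q\le 5$ (Theorem~\ref{Non-ExBD1}). There is no general statement saying that a Cameron--Liebler $k$-set of $\PG(n,q)$ with parameter $2$ must be a disjoint union of a point-pencil and a dual pencil; indeed the dual pencil has parameter $\frac{q^{n-k}-1}{q^{k+1}-1}$, which is rarely $1$. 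The paper is explicit about this: immediately after stating Theorem~\ref{Smallx} it says the result \emph{cannot} be deduced from the literature on $\PG(n,q)$, and Remark~\ref{CharacInAG} spells out exactly which $(n,q)$ the reduction-to-$\PG$ strategy misses. Your closing paragraph already anticipates this obstacle; it is not a technicality but the entire difficulty.

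What the paper does instead is genuinely different. For $x=1$ it proceeds as you do, via Theorem~\ref{CLkSpaceBasic} and the projective $x=1$ classification, ruling out the hyperplane case by Lemma~\ref{x=1Inn-1}. For $x=2$ it first gives a direct affine argument for \emph{lines} (Theorem~\ref{ConclParam2}): the two lines of $\cL$ through each point at infinity span a plane, these planes pairwise meet in a line, and the resulting pencil-of-planes structure is shown to contradict the disjointness count~\eqref{EqLinesEq}. Then for general $k$ it introduces a projection trick (Construction~\ref{ConstructNew} and Theorem~\ref{CLkSetReduction}): fixing a $(k-2)$-space $I$ at infinity and projecting the elements of $\cL$ through $I$ onto a complementary $(n-k+1)$-space yields a Cameron--Liebler \emph{line} class in $\AG(n-k+1,q)$ with the same parameter, to which the line result applies (Corollary~\ref{No2}). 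This affine reduction-to-lines has no projective analogue and is precisely what lets the paper bypass the missing projective classification.
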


Note that this cannot be deduced from the literature on $\PG(n, q)$. We are also able to classify all Cameron-Liebler sets of hyperplanes in AG($n,q$), this will be done in Section \ref{sec:hyp}. We conclude with suggestions for future work in Section \ref{sec:fut}.

	\section{Preliminaries}
	Consider a prime $p$ and let $q=p^h$, with $h\geq 1$. Then consider $\PG(n,q)$, and $\AG(n,q)$, for $n>2$, as the $n$-dimensional projective, and affine, space over $\mathbb{F}_q$ respectively. 
 Suppose that we consider a hyperplane $\pi_\infty$ in $\PG(n,q)$, which from now on will be called the hyperplane at infinity. Then we can consider all the points, lines, planes, and other spaces that do not lie inside $\pi_\infty$. In this way we obtain the affine space $\AG(n,q)$. 
The following notation will be used throughout this article.
\begin{Def}
 For $a,b \in \mathbb{N}$, we denote the \emph{Gaussian binomial coefficient} by
  $$\gauss{a}{b}_q= \frac{(q^a-1)\cdots(q^{a-b+1}-1)}{(q^b-1)\cdots (q-1)}.$$
  The Gaussian binomial coefficient $\gauss{a}{b}_q$ equals the number of $(b-1)$-spaces in  $\PG(a-1,q)$. Here we define that  $\gauss{a}{b}_q=0$ if $b>a$.
\end{Def}

 In general we take $k\geq 1$ with $n\geq k+1$, unless otherwise stated. Note that if we also ask that $(k+1)\mid (n+1)$, then it automatically follows that $n\geq 2k+1$. 

\begin{Def}
Consider $\PG(n,q)$, or $\AG(n,q)$ respectively.
\begin{enumerate}
\item A \emph{partial $k$-spread} is a set of pairwise disjoint $k$-spaces.
\item A \emph{conjugated switching $k$-set} is a pair of disjoint partial $k$-spreads that cover the same set of points.
\item A \emph{$k$-spread} is a partial $k$-spread that partitions the point set of $\PG(n,q)$, or $\AG(n,q)$ respectively.
\end{enumerate}
\end{Def}

\begin{Opm}
Since there is a lot of interaction between $\PG(n,q)$ and $\AG(n,q)$ we want to clarify some formulations. 
\begin{itemize}
\item Suppose that we have a set of $k$-spaces $\mathcal{L}$ in $\PG(n,q)$. Then the \emph{restriction} of $\mathcal{L}$ to $\AG(n,q)$ is the set of $k$-spaces of $\mathcal{L}$ that are not contained in $\pi_\infty$.
\item We say that two $k$-spaces  are \emph{projectively (or affinely) disjoint} if they intersect in the empty set in $\PG(n,q)$ (or $\AG(n,q)$ respectively).
\end{itemize}
\end{Opm}

We will give some examples of $k$-spreads in $\AG(n,q)$, which we will denote by respectively type I, II and III for future purposes.
\begin{Le}\label{SpreadsAG(n,q)}
			Consider the affine space $\AG(n,q)$ and the corresponding projective space $\PG(n,q)$. Then the following $k$-sets $\mathcal{S}$ are  $k$-spreads in $\AG(n,q)$.
			
			\begin{enumerate}
				\item (\emph{Type I}) 
Every  $k$-spread in $\PG(n,q)$ restricted to the affine space.
				\item (\emph{Type II}) Consider a $(k-1)$-space $K$ in \(\pi_\infty\) and define	the set \(\mathcal{S}\) as the set of all affine $k$-spaces through $K$.
				\item(\emph{Type III}) 	Consider an $(n-2)$-space \(\pi_{n-2}\) in \(\pi_\infty\), then there are exactly \(q\) other hyperplanes through $\pi_{n-2}$ not equal to \(\pi_\infty\). Call these hyperplanes \(\pi_i\), for \(i \in \{1,...,q\}\). If we select for every hyperplane \(\pi_i\) a $(k-1)$-space $\tau_i\subseteq \pi_{n-2}$ (not all equal), then we can define the $k$-spread
				 \[\mathcal{S}:=\{K \mid K \text{ a $k$-space in }\AG(n,q), \,\,\, \tau_i \subseteq K\subseteq \pi_i \text{ for some } i \in \{1,...,q\}\}.\]		
			 \end{enumerate}
	\end{Le}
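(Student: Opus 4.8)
The plan is to verify, for each of the three types, the two defining properties of a $k$-spread: that the affine $k$-spaces involved are pairwise affinely disjoint, and that every affine point of $\AG(n,q)$ lies on one of them. I would treat Type II first, since it is the conceptual heart of the argument. Let $K$ be the fixed $(k-1)$-space in $\pi_\infty$. The key observation is that for any affine point $P$ the span $\langle K, P\rangle$ is a projective $k$-space: since $P\notin\pi_\infty\supseteq K$, the point $P$ does not lie in $\langle K\rangle$, so the dimension increases by exactly one. This $k$-space is not contained in $\pi_\infty$, so its affine part is an affine $k$-space through $K$ covering $P$, which gives the covering property. For disjointness, if two affine $k$-spaces through $K$ shared an affine point $P$, then both would contain $K$ and $P$ and hence both equal $\langle K, P\rangle$ projectively, forcing them to coincide. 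Thus distinct members are affinely disjoint, and Type II is a spread.

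For Type I, I would use that a $k$-spread of $\PG(n,q)$ partitions all projective points, and in particular all affine points. The members contained in $\pi_\infty$ cover no affine point and are discarded by the restriction; every remaining member meets $\pi_\infty$ in a $(k-1)$-space, so its affine part is a genuine affine $k$-space. Since the projective members were pairwise disjoint and together covered each affine point exactly once, their affine restrictions form a spread of $\AG(n,q)$.

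For Type III, I would first note that $\pi_1,\dots,\pi_q$ together with $\pi_\infty$ form the pencil of all hyperplanes through $\pi_{n-2}$, indexed by the quotient line $\PG(n,q)/\pi_{n-2}\cong\PG(1,q)$. Every affine point $P$ projects to a point of this line and therefore lies in exactly one hyperplane of the pencil; being affine, $P\notin\pi_\infty$, so $P$ lies in exactly one $\pi_i$. Each $\pi_i$ meets $\pi_\infty$ in precisely $\pi_{n-2}$, so its affine part is an $\AG(n-1,q)$ with hyperplane at infinity $\pi_{n-2}$, and the prescribed members $\tau_i\subseteq K\subseteq\pi_i$ are exactly the affine $k$-spaces of this $\AG(n-1,q)$ through the $(k-1)$-space $\tau_i$ at infinity. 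By the Type II argument applied inside $\pi_i$, these partition the affine points of $\pi_i$. Finally, two members coming from different hyperplanes $\pi_i,\pi_j$ can only meet inside $\pi_i\cap\pi_j=\pi_{n-2}\subseteq\pi_\infty$, hence are affinely disjoint. Combining the partition of the affine space across the $\pi_i$ with the within-$\pi_i$ spreads yields a spread of $\AG(n,q)$.

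I expect no single hard obstacle here; the only points requiring care are the dimension bookkeeping (checking that each constructed member is truly a $k$-dimensional affine space and meets $\pi_\infty$ in exactly a $(k-1)$-space) and recognising that the hypothesis that the $\tau_i$ are not all equal plays no role in the spread property itself---it is presumably imposed to ensure that Type III genuinely differs from the other constructions. The reduction of Type III to Type II via the hyperplane pencil is the structural insight that keeps the argument short.
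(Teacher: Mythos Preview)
Your proposal is correct and follows essentially the same approach as the paper: for Type I you restrict a projective spread, for Type II you use the span $\langle K,P\rangle$, and for Type III you observe that each affine point lies in a unique $\pi_i$ and then argue within that hyperplane. Your version is in fact more detailed than the paper's---the paper calls Type II ``trivial'' and for Type III simply asserts disjointness and constructs $\langle p,\tau_i\rangle$ directly, whereas you make the reduction of Type III to Type II inside each $\pi_i$ explicit; your remark that the ``not all equal'' hypothesis is irrelevant to the spread property is also correct and is not commented on in the paper.
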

 \begin{proof}
 	\begin{enumerate}
 		\item  Consider a projective $k$-spread $\mathcal{S}$, then it is clear that every two $k$-spaces of $\mathcal{S}$ are affinely disjoint.   Secondly, $\mathcal{S}$ restricted to $\AG(n,q)$ partitions the point set of $\AG(n,q)$, since its extension reaches every  (affine) point in $\PG(n,q)$.
 		\item Trivial.
 		\item It is clear that all these elements are disjoint. Thus we only need to prove that for every affine point \(p\) there exists an element of \(\mathcal{S}\) that contains it. Consider for this point \(p\) the hyperplane $\langle p, \pi_{n-2} \rangle$, then this is a hyperplane through \(\pi_{n-2}\). Without loss of generality we may assume that it is \(\pi_i\). Such that $\langle p, \tau_i \rangle$ is a $k$-space in \(\mathcal{S}\) which contains \(p\). This proves that \(\mathcal{S}\) is indeed a  $k$-spread.
 	\end{enumerate}

\end{proof}

\begin{Opm}
The size of a $k$-spread in $\AG(n,q)$ is equal to $\frac{q^n}{q^k}=q^{n-k}$, where $q^k$ is the number of points in an affine $k$-space and $q^n$ is the total number of points in $\AG(n,q)$. An analogous result can be obtained in $\PG(n,q)$, where the size of a $k$-spread is known to be $\frac{q^{n+1}-1}{q^{k+1}-1}$. Note that this number is only an integer if $(k+1)\mid (n+1)$, so it follows that this is a necessary condition for the existence of $k$-spreads in $\PG(n,q)$. It is proven in \cite[Corollary 4.17]{Hirschfeld} that this is also a sufficient condition. 
\end{Opm}

\begin{Def}
Let us denote the set of $k$-spaces in $\PG(n,q)$, and $\AG(n,q)$, by $\Pi_k$, and $\Phi_k$, respectively. If we number the points and the $k$-spaces in these spaces, then we can define the point-($k$-space) incidence matrix $P_n$ and $A_n$ respectively. These matrices are $0,1$-valued matrices with a $1$ on position $(i,j)$ if and only if point $i$ lies on $k$-space $j$.
\end{Def}
We now give a special construction for the matrix $P_n$.
\begin{Cons}[Incidence matrix]\label{IncidenceMatrix}
Consider now the point-($k$-space) incidence matrix \(P_n\) of $\PG(n,q)$, where the rows correspond to the points and the columns correspond to the elements of \(\Pi_k\). We order the rows and columns in such a way that the first rows and columns correspond to the affine points and  affine $k$-spaces respectively.  Then \(P_n\) is of the following form:
\begin{equation}\label{IncidenceMatrixPn}
P_n= \begin{bmatrix}
A_n & \bar{0} \\
B_2 & P_{n-1}
\end{bmatrix},
\end{equation}
where \(A_n\) is the incidence matrix of $\AG(n,q)$, where again the rows  correspond to the points and the columns correspond to the elements of \(\Phi_k\). The matrix $\bar{0}$ is the zero-matrix and the part that remains unnamed, we call \(B_2\).
\end{Cons}
We will use the notation of Construction \ref{IncidenceMatrix} in the following results. These results give some information about the characteristic vector of Cameron-Liebler $k$-sets. This characteristic vector is a $0,1$ valued vector, which contains a $1$ on position $i$ if and only if the $i$th $k$-space belongs to the Cameron-Liebler $k$-set.

\begin{Le}\cite[Lemma 2.4]{DMSS}\label{2to8}
	Consider a set \(\mathcal{L}\) of $k$-spaces in $\AG(n,q)$ such that \(\chi_{\mathcal{L}} \in (\ker(A_n))^\perp=\text{Im}(A_n^T)\). 
Then it follows for every affine $k$-spread \(\mathcal{S}\) that
	\[|\mathcal{L}\cap\mathcal{S}|=x,\]
	for a fixed integer \(x\).
\end{Le}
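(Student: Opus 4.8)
The plan is to turn the linear-algebra hypothesis into an explicit witness and then read off the intersection number as a bilinear form that manifestly does not depend on the spread. Since $\chi_{\mathcal{L}} \in \text{Im}(A_n^T)$, I would first write $\chi_{\mathcal{L}} = A_n^T v$ for some real vector $v$ indexed by the affine points of $\AG(n,q)$. Both $\chi_{\mathcal{L}}$ and the characteristic vector $\chi_{\mathcal{S}}$ of a $k$-spread $\mathcal{S}$ are column vectors indexed by $\Phi_k$, so the intersection number is the standard inner product $|\mathcal{L} \cap \mathcal{S}| = \chi_{\mathcal{L}}^T \chi_{\mathcal{S}}$.

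The key step is the observation that a $k$-spread behaves very simply under $A_n$. Because $\mathcal{S}$ is a partition of the affine point set, every affine point lies on exactly one $k$-space of $\mathcal{S}$; hence the vector $A_n \chi_{\mathcal{S}}$, whose $p$-th coordinate counts the $k$-spaces of $\mathcal{S}$ through the point $p$, is the all-ones vector $\mathbf{1}$ indexed by the affine points. This single identity is where the spread hypothesis enters, and it is really the only nontrivial input.

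Combining the two, I would compute
\[
|\mathcal{L} \cap \mathcal{S}| = \chi_{\mathcal{L}}^T \chi_{\mathcal{S}} = (A_n^T v)^T \chi_{\mathcal{S}} = v^T (A_n \chi_{\mathcal{S}}) = v^T \mathbf{1} = \sum_p v_p .
\]
The right-hand side depends only on $v$, hence only on $\mathcal{L}$, and not on the choice of spread $\mathcal{S}$; setting $x := \sum_p v_p$ gives the claimed constant. Finally, since the left-hand side is a cardinality for every spread $\mathcal{S}$, and affine $k$-spreads do exist (e.g. those of Lemma \ref{SpreadsAG(n,q)}), the common value $x$ is automatically a nonnegative integer.

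As for difficulty, there is essentially no obstacle once the identity $A_n \chi_{\mathcal{S}} = \mathbf{1}$ is in hand; the only thing to be careful about is the bookkeeping of the row/column index sets of $A_n$, so that the transposition and the product $v^T A_n \chi_{\mathcal{S}}$ are well-formed. I would also emphasize that the argument uses nothing about $\mathcal{S}$ beyond its being a spread, so it applies uniformly to the Type I, II and III spreads and to any other affine $k$-spread.
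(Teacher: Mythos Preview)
Your argument is correct and is exactly the standard proof: write $\chi_{\mathcal L}=A_n^Tv$, use that a spread gives $A_n\chi_{\mathcal S}=\mathbf 1$, and read off $|\mathcal L\cap\mathcal S|=v^T\mathbf 1$. Note, however, that in this paper Lemma~\ref{2to8} is not proved at all; it is simply quoted from \cite[Lemma~2.4]{DMSS}, so there is no in-paper proof to compare against. Your write-up is precisely the argument one expects to find in that reference, so there is nothing to add beyond observing that your remark about integrality (via the existence of a spread from Lemma~\ref{SpreadsAG(n,q)}) is a nice touch that the present paper leaves implicit.
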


\begin{Opm}
We should also note that in general it holds that $\text{Im}(P_n^T)=(\ker(P_n))^\perp$ and $\text{Im}(A_n^T)=(\ker(A_n))^\perp$.
\end{Opm}
\begin{St}\cite[Theorem 2.3]{DMSS}\label{chiUpToDown}
	Consider the projective space $\PG(n,q)$ and consider a set of $k$-spaces \(\mathcal{L}\). If its characteristic vector \(\chi_{\mathcal{L} }\in (\ker(P_n))^\perp \) and \(\mathcal{L}\) also contains no $k$-spaces at infinity, then  \(\chi_{\mathcal{L} } \) restricted to the affine space belongs to \( (\ker(A_n))^\perp\). 
\end{St}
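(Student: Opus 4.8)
The plan is to exploit the block decomposition of $P_n$ from Construction~\ref{IncidenceMatrix} and to reduce the statement to a solvability question for the incidence matrix $P_{n-1}$ of the geometry at infinity. Write the characteristic vector as $\chi_{\mathcal{L}} = (\chi, \chi_\infty)$, where $\chi$ collects the coordinates of the affine $k$-spaces and $\chi_\infty$ those of the $k$-spaces contained in $\pi_\infty$. Since $\mathcal{L}$ contains no $k$-space at infinity we have $\chi_\infty = \bar 0$, and the goal is to show $\chi \in \text{Im}(A_n^T) = (\ker(A_n))^\perp$.

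First I would set up the duality. Let $w \in \ker(A_n)$ be arbitrary; it suffices to prove $\langle \chi, w\rangle = 0$. The idea is to lift $w$ to an element of $\ker(P_n)$ with the same affine part. Using the block form, a vector $(w, w_\infty)$ lies in $\ker(P_n)$ exactly when $A_n w = \bar 0$ (true, since $w \in \ker(A_n)$) and $P_{n-1} w_\infty = -B_2 w$. If such a $w_\infty$ exists, then $(w,w_\infty) \in \ker(P_n)$, and since $\chi_{\mathcal{L}} \in (\ker(P_n))^\perp$ we obtain $0 = \langle \chi_{\mathcal{L}}, (w,w_\infty)\rangle = \langle \chi, w\rangle + \langle \chi_\infty, w_\infty\rangle = \langle \chi, w\rangle$, the last equality because $\chi_\infty = \bar 0$. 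Thus the theorem reduces to the single claim that $B_2 w \in \text{Im}(P_{n-1})$ for every $w \in \ker(A_n)$, that is, to the solvability of $P_{n-1} w_\infty = -B_2 w$.

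The main step, and the place where the geometry enters, is to show that $\text{Im}(P_{n-1})$ already contains the vector $B_2 w$, which lives on the points of $\pi_\infty \cong \PG(n-1,q)$. When $k \le n-2$ I would prove the stronger fact that $P_{n-1}$ has full row rank, so that $\text{Im}(P_{n-1})$ is the entire space indexed by the points of $\pi_\infty$ and solvability is automatic for every right-hand side. This reduces to a standard Gram-matrix computation: letting $N$ be the point-($k$-space) incidence matrix of $\PG(n-1,q)$, and using that two distinct points at infinity lie together on $\gauss{n-2}{k-1}_q$ of the $k$-spaces at infinity while each point lies on $\gauss{n-1}{k}_q$ of them, Pascal's identity gives $N N^T = q^k \gauss{n-2}{k}_q I + \gauss{n-2}{k-1}_q J$. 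This is a nonnegative combination of $I$ and $J$ with a strictly positive coefficient on $I$ precisely when $\gauss{n-2}{k}_q \neq 0$, i.e.\ when $k \le n-2$; hence $N N^T$ is positive definite and $N$ has full row rank.

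The remaining obstacle is the boundary case $n = k+1$, where the $k$-spaces are affine hyperplanes, $P_{n-1}$ degenerates to the single all-ones column, and $\text{Im}(P_{n-1}) = \langle \bar 1\rangle$ is no longer the full space. Here the full-rank argument fails and one must use the kernel condition directly: writing $(B_2 w)_p = \sum_{H \ni p} w_H$ as a sum over affine hyperplanes $H$ through the infinite point $p$, I would exploit $w \in \ker(A_n)$ (which forces $\sum_H w_H = 0$ together with the vanishing of the relevant incidence sums) to show that $B_2 w$ is a scalar multiple of $\bar 1$, hence lies in $\text{Im}(P_{n-1})$. This hyperplane case is genuinely separate, consistent with its independent treatment in Section~\ref{sec:hyp}, and is the delicate part of the argument; for all $n \ge k+2$ the full-rank computation above settles the theorem at once.
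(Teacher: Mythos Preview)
The paper does not prove this statement; it is quoted verbatim from \cite[Theorem 2.3]{DMSS}, where the setting is lines in $\PG(3,q)$. So there is no ``paper's own proof'' to compare with.

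Your linear-algebraic reduction is the natural one, and for $k\le n-2$ it is correct: the block form yields the solvability condition $B_2 w\in\text{Im}(P_{n-1})$, and your Gram computation $P_{n-1}P_{n-1}^T=q^{k}\gauss{n-2}{k}_q I+\gauss{n-2}{k-1}_q J$ shows that $P_{n-1}$ has full row rank precisely when $\gauss{n-2}{k}_q>0$, i.e.\ when $k\le n-2$. This already covers every instance in which the paper actually \emph{uses} Theorem~\ref{chiUpToDown}, and in particular the original DMSS setting.

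The genuine gap is your handling of the boundary case $k=n-1$. There the theorem as stated is \emph{false}, so your sketch cannot be completed. For hyperplanes the matrix $P_n$ is square and (by your own Gram argument) invertible, so $\ker(P_n)=0$ and the hypothesis $\chi_{\mathcal L}\in(\ker P_n)^\perp$ is vacuous. But $\ker(A_n)\ne 0$ in general. Concretely, in $\AG(2,2)$ label the six affine lines by parallel classes $\{h_0,h_1\},\{v_0,v_1\},\{d_0,d_1\}$ and take $w=(1,1,0,0,-1,-1)$; one checks directly that $A_n w=0$, yet $(B_2w)_p$ equals $2,0,-2$ at the three points at infinity, which is not a multiple of $\bar 1$. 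Equivalently, the single parallel class $\mathcal L=\{h_0,h_1\}$ satisfies all hypotheses of the theorem but $\chi_{\mathcal L}\notin(\ker A_n)^\perp$. So your claim that the kernel condition forces $B_2w\in\langle\bar 1\rangle$ is simply wrong. The correct conclusion is that Theorem~\ref{chiUpToDown} should be read with the implicit restriction $k\le n-2$ (as in the source \cite{DMSS}); the hyperplane case is genuinely different and is treated separately in Section~\ref{sec:hyp}, not via this lemma.
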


\subsection{Cameron-Liebler $k$-sets in $\PG(n,q)$}
Our goal here is to state some important results that are known for Cameron-Liebler $k$-sets in $\PG(n,q)$.
We start with the definition of Cameron-Liebler $k$-sets in $\PG(n,q)$. 
\begin{Def}

 A Cameron-Liebler $k$-set $\mathcal{L}$ in $\PG(n,q)$ is a set of $k$-spaces such that for its characteristic vector $\chi_\cL$, it holds that $\chi_\cL \in$ Im($P_n^T$). We say that $\mathcal{L}$ has parameter $x:=|\mathcal{L}|/\gauss{n}{k}$.

\end{Def}
\begin{Opm}
The fact that $\chi_\cL \in$ Im($P_n^T$) states that $\chi_\cL$ is a linear combination of the rows of $P_n^T$. In some literature, for example \cite{BDF},  the characteristic vector $\chi_\cL$ is called a Boolean degree 1 function in $\PG(n,q)$. Similarly we can consider $\chi_\cL \in$ Im($A_n^T$) for the incidence matrix $A_n$ in $\AG(n,q)$.
\end{Opm}

In this section we list some results on Cameron-Liebler $k$-spaces in PG($n,q$). We refer to \cite{DBD2018} for more information. 

\begin{St}\cite[Theorem 2.9]{DBD2018}\label{EquivalenceProj}
	Let \(\mathcal{L}\) be a non-empty set of $k$-spaces in $\PG(n,q)$, $n \geq 2k+1$, with characteristic vector $\chi$, and $x$ so that 
	$|\mathcal{L}|= x\gauss{n}{k}_q$. Then the following properties are equivalent.
	\begin{enumerate}
		\item The set $\mathcal{L}$ is a Cameron-Liebler $k$-set. 
		\item For every $k$-space $K$, the number of elements of $\mathcal{L}$ disjoint from $K$ is equal to $(x-\chi(K))\gauss{n-k-1}{k}_q q^{k^2+k}$.
		 \item For every pair of conjugated switching sets $\mathcal{R}$ and $\mathcal{R}'$, $|\mathcal{L}\cap \mathcal{R}|=|\mathcal{L}\cap \mathcal{R}'|$.

	\end{enumerate}
If $\PG(n,q)$ has a $k$-spread, then the following property is equivalent to the previous ones.
\begin{enumerate}
\item[4] For every $k$-spread $\mathcal{S}$, $|\mathcal{L}\cap \mathcal{S}|=x$.
\end{enumerate}
\end{St}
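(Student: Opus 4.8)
The plan is to pass to the association scheme of $k$-spaces of $\PG(n,q)$ (the Grassmann scheme), whose pairwise orthogonal common eigenspaces I denote $V_0, V_1, \dots, V_{k+1}$, with $V_0 = \langle \mathbf{1} \rangle$ the all-ones line. The whole argument rests on the classical identification $\text{Im}(P_n^T) = V_0 \oplus V_1$: the rows of $P_n^T$ are the point-pencils (for a fixed point, the set of $k$-spaces through it), and for $n \geq 2k+1$ these span exactly the two bottom eigenspaces. Under this identification property (1) reads $\chi \in V_0 \oplus V_1$, and, since the $V_j$ are mutually orthogonal, $\ker(P_n) = (\text{Im}\,P_n^T)^\perp = V_2 \oplus \cdots \oplus V_{k+1}$. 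I would then prove $(1) \Leftrightarrow (2)$, $(1) \Leftrightarrow (3)$ and $(1) \Leftrightarrow (4)$ separately.

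For $(1) \Leftrightarrow (2)$ I introduce the disjointness matrix $D$, the adjacency matrix of the graph on $k$-spaces in which two $k$-spaces are adjacent if and only if they are projectively disjoint; this is the extreme relation of the scheme, so $D$ acts as a scalar $\lambda_j$ on each $V_j$. Writing $c = \gauss{n-k-1}{k}_q q^{k^2+k}$, property (2) is exactly the vector identity $D\chi = xc\,\mathbf{1} - c\,\chi$, i.e. $(D + cI)\chi \in V_0$. The key computation, which I read off from the known eigenvalues of the Grassmann scheme \cite{Delsarte1976,Eisfeld1999}, is that $\lambda_1 = -c$, while $\lambda_0 + c \neq 0$ and $\lambda_j \neq -c$ for all $j \geq 2$. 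Given this, $D + cI$ annihilates $V_1$, is nonzero on $V_0$ and invertible on each $V_j$ with $j \geq 2$; hence $(D+cI)\chi \in V_0$ forces the components of $\chi$ in $V_2, \dots, V_{k+1}$ to vanish, which is $\chi \in V_0 \oplus V_1$. The reverse direction is the same computation run backwards, the constant term matching up to the Gaussian-coefficient identity $\gauss{n+1}{k+1}_q = \frac{q^{n+1}-1}{q^{k+1}-1}\gauss{n}{k}_q$.

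The forward directions of (3) and (4) are soft and purely linear-algebraic. If $\mathcal{R}, \mathcal{R}'$ are conjugated switching sets then they cover the same points, so $P_n(\chi_{\mathcal{R}} - \chi_{\mathcal{R}'}) = \bar{0}$, i.e. $\chi_{\mathcal{R}} - \chi_{\mathcal{R}'} \in \ker(P_n)$; as (1) gives $\chi \in \text{Im}(P_n^T) = (\ker P_n)^\perp$, we obtain $|\mathcal{L} \cap \mathcal{R}| = \chi^T \chi_{\mathcal{R}} = \chi^T \chi_{\mathcal{R}'} = |\mathcal{L} \cap \mathcal{R}'|$, which is (3). Similarly, writing $\chi = P_n^T y$ and using $P_n \chi_{\mathcal{S}} = \mathbf{1}$ for a $k$-spread $\mathcal{S}$, we get $|\mathcal{L} \cap \mathcal{S}| = \chi^T \chi_{\mathcal{S}} = y^T P_n \chi_{\mathcal{S}} = y^T \mathbf{1}$, independent of $\mathcal{S}$; averaging this constant over all spreads and using the same Gaussian identity identifies it as $x$, which is (4).

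The main obstacle is the two converses $(3) \Rightarrow (1)$ and $(4) \Rightarrow (1)$: here I must show that the differences $\chi_{\mathcal{R}} - \chi_{\mathcal{R}'}$ of conjugated switching sets (respectively the differences $\chi_{\mathcal{S}} - \chi_{\mathcal{S}'}$ of two $k$-spreads) already span all of $\ker(P_n) = V_2 \oplus \cdots \oplus V_{k+1}$. Once that is in place, the hypotheses of (3) or (4) say precisely that $\chi$ is orthogonal to this spanning set, hence to $\ker(P_n)$, so $\chi \in \text{Im}(P_n^T)$ and (1) follows; the constant being the stated $x$ is forced by the averaging computation above. To produce enough switching sets I would build them from small exchanges supported in low-dimensional subspaces---e.g. replacing one minimal partial spread of a suitable subconfiguration by a conjugate one covering the same points---and verify by an eigenspace and dimension count that their span exhausts the kernel; the hypothesis $n \geq 2k+1$ provides exactly the room needed to embed these configurations and, for (4), to guarantee that a $k$-spread exists. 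Carrying out this spanning argument, rather than the soft linear algebra, is where the real work lies.
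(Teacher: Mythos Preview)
This statement is not proved in the present paper: it is quoted as \cite[Theorem~2.9]{DBD2018} and used as a black box (for instance in the proof of Theorem~\ref{EquivalencesGeneral}). There is therefore no proof in this paper to compare your proposal against.

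That said, your outline follows the standard association-scheme route and is sound where it is explicit. The identification $\text{Im}(P_n^T)=V_0\oplus V_1$ is the correct starting point, and the eigenvalue argument for $(1)\Leftrightarrow(2)$ via the disjointness relation is right: the relevant eigenvalue on $V_1$ is indeed $-\gauss{n-k-1}{k}_q q^{k^2+k}$, and it is distinct from the eigenvalues on the other $V_j$, so $(D+cI)\chi\in V_0$ is equivalent to $\chi\in V_0\oplus V_1$. The forward implications $(1)\Rightarrow(3)$ and $(1)\Rightarrow(4)$ are exactly the soft linear-algebra arguments you give.

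The place where your proposal is only a plan and not a proof is, as you yourself flag, the converses $(3)\Rightarrow(1)$ and $(4)\Rightarrow(1)$. Saying you will ``build switching sets from small exchanges in low-dimensional subspaces and verify by an eigenspace and dimension count that their span exhausts the kernel'' is a description of what needs to be done, not an argument that it works. Concretely you must exhibit, for each $j\in\{2,\dots,k+1\}$, either a switching pair (for (3)) or a pair of spreads (for (4)) whose difference has nonzero $V_j$-component, and then invoke transitivity of the collineation group on the relevant relations to fill out each eigenspace. For (4) one typically computes the inner distribution of a Desarguesian $k$-spread and checks which entries of $uQ$ are nonzero; for (3) one has to manufacture enough local configurations, and this is where the hypothesis $n\geq 2k+1$ genuinely enters. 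Until that computation or construction is written out, the converses remain unproved.
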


\begin{Ex}\cite[Example 3.2]{DBD2018}\label{TrivialEx}
The following $k$-sets are examples of Cameron-Liebler $k$-sets in $\PG(n,q)$.
\begin{enumerate}
 \item The set of all the $k$-spaces through a fixed point is an example of a Cameron-Liebler $k$-set of parameter $x=1$.

 \item If we consider the set of $k$-spaces inside a fixed hyperplane, then this is a Cameron-Liebler $k$-set of parameter $x=\frac{q^{(n-k)}-1}{q^{(k+1)}-1}$. Note that $x$ is only an integer if and only if $(k+1)\mid (n+1)$.
\end{enumerate}
\end{Ex}
In order to give some context on the study of Cameron-Liebler $k$-sets in $\PG(n,q)$, we now give some classification results
\begin{St}\cite[Theorem 4.1]{BDF}\label{Non-ExBD1}
Let $q \in\{2,3,4,5\}$. Then all Cameron-Liebler $k$-sets in $\PG(n,q)$ are of the form of Example \ref{TrivialEx}, if $k,n-k \geq 2$ and either (a) $n \geq 5$ or (b) $n = 4$ and $q = 2$.
\end{St}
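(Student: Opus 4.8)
The plan is to first recast the definition in terms of point-weights. Since $\chi_\cL \in \mathrm{Im}(P_n^T)$, there are real numbers $a_P$, one for each projective point $P$, such that $\chi_\cL(K)=\sum_{P\in K}a_P$ for every $k$-space $K$; because $\chi_\cL$ is a $0,1$-vector, every such subspace-sum lies in $\{0,1\}$. So the task becomes: classify all real weightings of the points of $\PG(n,q)$ whose restriction to each $k$-space sums to $0$ or $1$, and show that under the stated hypotheses and $q\le 5$ the only ones are the configurations of Example~\ref{TrivialEx}, together with their disjoint unions and complements. Throughout I would keep Theorem~\ref{EquivalenceProj} available as the numerical backbone in the regime $n\ge 2k+1$, where the equivalent formulation via disjoint $k$-spaces and spreads gives tight control on the parameter $x$.

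The core of the argument would be an induction on $n$ and $k$ built from two derived Cameron-Liebler sets. First, fix a point $P_0$ and pass to the quotient $\PG(n,q)/P_0\cong\PG(n-1,q)$: the $k$-spaces through $P_0$ correspond to $(k-1)$-spaces of the quotient, and setting $b_{\bar Q}=\sum_{Q\in \bar Q\setminus P_0}a_Q + a_{P_0}(q-1)/(q^k-1)$ on each quotient point $\bar Q$ (a line through $P_0$) yields $\sum_{\bar Q\subseteq \bar K}b_{\bar Q}=\chi_\cL(K)$, so the star of $P_0$ restricts to a Cameron-Liebler $(k-1)$-set of the quotient. Second, intersecting with a hyperplane $\Sigma\cong\PG(n-1,q)$, the weights $a_P$ with $P\in\Sigma$ witness directly that the $k$-spaces of $\cL$ inside $\Sigma$ form a Cameron-Liebler $k$-set of $\Sigma$. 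Projective duality ($k$-spaces $\leftrightarrow (n-1-k)$-spaces) then lets me move between $k\le (n-1)/2$ and $k\ge (n-1)/2$ and, in the codimension-two case $n-k=2$, connect to the classification of line classes. By the induction hypothesis every quotient and every section is trivial.

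Next I would reconstruct $\cL$ from the fact that all its quotients and hyperplane-sections are trivial. The crucial local observation is that restricting the weights $a_P$ to a fixed $(k+1)$-space $\Pi\cong\PG(k+1,q)$ forces every hyperplane of $\Pi$ to carry total weight $0$ or $1$; hence the weight-$1$ hyperplanes form a Cameron-Liebler set of hyperplanes of $\Pi$, and the admissible local pictures are exactly the ingredients the section and quotient data must be assembled from. Matching these overlapping local configurations consistently across all $(k+1)$-spaces should force the global weight function, up to complementation, to be concentrated at a single point (a point-pencil), or supported so as to single out one hyperplane (a hyperplane-$k$-set), or the disjoint union of two such, which are precisely the examples of Example~\ref{TrivialEx}.

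I expect the main obstacle to be exactly this gluing step, together with the base cases, and this is also where the restriction $q\le 5$ enters. The base cases are small $n$ (notably $n=4$, $q=2$) and the rank-one case $k=1$ reached through duality; for $q\le 5$ the needed classifications of Cameron-Liebler line classes and of hyperplane sets are available (partly by direct computation in the small instance $n=4$, $q=2$), and the number of candidate local configurations inside a $\PG(k+1,q)$ is small enough to exclude any exotic assembly. The hard part is ensuring that local triviality on every quotient and section really propagates to a single global trivial example, ruling out ``mixed'' weightings that are admissible on each $(k+1)$-space yet cannot be realized simultaneously; for $q>5$ this exclusion is precisely what is not known, which is why the statement is confined to $q\le 5$.
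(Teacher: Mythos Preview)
The statement you are attempting to prove is not proved in this paper at all: it is quoted verbatim as \cite[Theorem 4.1]{BDF} in the preliminaries section and is used as a black box (for instance in the proof of the affine analogue just after Lemma~\ref{x=1Inn-1}). There is therefore no ``paper's own proof'' to compare your proposal against.

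That said, your outline is broadly in the spirit of the argument in \cite{BDF}: the reformulation as a Boolean degree~$1$ function (your point-weights $a_P$ with $\sum_{P\in K}a_P\in\{0,1\}$), the two inductive moves via quotients at a point and restrictions to a hyperplane, and the use of duality are all ingredients that appear there. Your proposal is, however, only a strategy sketch rather than a proof. The step you yourself flag as the obstacle---the global gluing of locally trivial configurations---is essentially the entire content of the theorem, and nothing in what you wrote actually carries it out. In particular, the quotient weight you propose, $b_{\bar Q}=\sum_{Q\in\bar Q\setminus P_0}a_Q + a_{P_0}(q-1)/(q^k-1)$, does not give $\sum_{\bar Q\subseteq\bar K}b_{\bar Q}=\chi_\cL(K)$ as claimed (count the contribution of $a_{P_0}$), and you never explain concretely where the hypothesis $q\le 5$ is used beyond saying the base cases are ``available''. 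As written, this is a plausible plan but not a proof.
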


\begin{St}\cite[Theorem 4.1]{DBD2018} \label{CharX=1PG(n,q)}
Let $\mathcal{L}$ be a Cameron-Liebler $k$-set  with parameter $x = 1$ in $\PG(n,q)$, $n \geq 2k+1$. Then $\mathcal{L}$ consists out of all the $k$-spaces through a fixed point or $n = 2k+1$ and $\mathcal{L}$ is the set of all the $k$-spaces in a hyperplane of PG($2k + 1,q$).
\end{St}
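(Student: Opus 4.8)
The plan is to reduce the statement to the Erd\H{o}s--Ko--Rado theorem for subspaces together with its characterization of the extremal families. Throughout I write $\mathcal{P}_p$ for the point-pencil of a point $p$, i.e.\ the set of all $k$-spaces through $p$; recall that $|\mathcal{P}_p| = \gauss{n}{k}_q$ (these $k$-spaces correspond to the $(k-1)$-spaces of the quotient $\PG(n-1,q)$) and that $\mathcal{P}_p$ is itself a Cameron-Liebler $k$-set of parameter $1$ by Example \ref{TrivialEx}.

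First I would show that $\cL$ is an \emph{intersecting} family, meaning that any two of its $k$-spaces meet projectively. Since $\cL$ is a Cameron-Liebler $k$-set with $n \geq 2k+1$, I may apply property $2$ of Theorem \ref{EquivalenceProj} to its characteristic vector $\chi$: for every $k$-space $K$ the number of elements of $\cL$ projectively disjoint from $K$ equals $(x - \chi(K))\gauss{n-k-1}{k}_q q^{k^2+k}$. Taking $x = 1$ and $K \in \cL$, so that $\chi(K) = 1$, this count is $0$. Hence no element of $\cL$ is disjoint from any other, and $\cL$ is an intersecting family of size $|\cL| = \gauss{n}{k}_q$.

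Next I would recognise this size as extremal. Viewing the $k$-spaces as $(k+1)$-dimensional subspaces of $\mathbb{F}_q^{\,n+1}$, the Erd\H{o}s--Ko--Rado theorem for subspaces states that for $n \geq 2k+1$ an intersecting family of $k$-spaces of $\PG(n,q)$ has at most $\gauss{n}{k}_q$ members. Our family $\cL$ attains this bound, so it is a \emph{maximum} intersecting family, and I would invoke the classification of the extremal families. For $n \geq 2k+2$ the only maximum intersecting families are the point-pencils $\mathcal{P}_p$; since $|\cL| = |\mathcal{P}_p|$, the equality $\cL = \mathcal{P}_p$ follows and we are in the first case. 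For $n = 2k+1$ there is exactly one further type, namely the set of all $k$-spaces contained in a fixed hyperplane $H \cong \PG(2k,q)$; this set has $\gauss{2k+1}{k+1}_q = \gauss{2k+1}{k}_q = \gauss{n}{k}_q$ members, so again equality gives the second case of the theorem. Both families are genuinely Cameron-Liebler $k$-sets of parameter $1$: the pencil by Example \ref{TrivialEx}(1), and the hyperplane family by Example \ref{TrivialEx}(2), whose parameter $\tfrac{q^{n-k}-1}{q^{k+1}-1}$ equals $1$ precisely when $n = 2k+1$.

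The main obstacle is the characterization of the extremal intersecting families, i.e.\ the \emph{uniqueness} part of the Erd\H{o}s--Ko--Rado theorem for subspaces, which is considerably deeper than the bound itself; the critical case $n = 2k+1$, where both the point-pencils and the hyperplane families are extremal and one must rule out any other configuration, is the delicate point. The reduction from ``Cameron-Liebler $k$-set of parameter $1$'' to ``maximum intersecting family'', by contrast, is immediate once property $2$ of Theorem \ref{EquivalenceProj} is available, and it is exactly what makes the Erd\H{o}s--Ko--Rado classification applicable here.
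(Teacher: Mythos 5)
Your proof is correct, and it is essentially the intended argument: this paper does not prove the statement itself but quotes it as \cite[Theorem 4.1]{DBD2018}, whose proof proceeds exactly as you do — the disjointness count of Theorem \ref{EquivalenceProj}(2) with $x=1$ and $\chi(K)=1$ shows $\cL$ is an intersecting family of the maximum size $\gauss{n}{k}_q$, and the $t=0$ case of the Erd\H{o}s--Ko--Rado classification for $k$-spaces (available in this paper as Theorem \ref{ErdosKoRadoSt}) gives precisely the point-pencils and, for $n=2k+1$, the hyperplane family. You also correctly identify that the uniqueness part of the extremal characterization is the deep ingredient, which both you and the source import rather than reprove.
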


\begin{St}\cite[Theorem 4.2]{DBD2018}
 There are no Cameron–Liebler $k$-sets in PG($n,q$) with parameter $x \in ] 0,1[$ and if $n \geq 3k + 2$, then there are no Cameron–Liebler $k$-sets with parameter $x\in ]1,2[$.
\end{St}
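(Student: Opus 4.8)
The plan is to prove both non-existence statements by exploiting the nonnegativity of the disjointness counts furnished by Theorem~\ref{EquivalenceProj}, property (2), treating the two cases at successive ``orders'' of the same counting scheme. Throughout I may assume $n \geq 2k+1$, so that the scalar $C := \gauss{n-k-1}{k}_q q^{k^2+k}$ is strictly positive.

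For the first assertion (no parameter in $]0,1[$) the argument is short and essentially complete. Suppose $\cL$ is a Cameron-Liebler $k$-set with $0<x<1$; then $\cL \neq \emptyset$, so I pick some $K \in \cL$. By Theorem~\ref{EquivalenceProj}(2) the number of elements of $\cL$ disjoint from $K$ equals $(x-\chi(K))C = (x-1)C$, which is strictly negative since $x<1$ and $C>0$. As this is supposed to count a set of $k$-spaces, we have a contradiction, so no such $\cL$ exists. Note that this step uses nothing beyond $n \geq 2k+1$.

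For the second assertion I would run the analogous argument one order higher, which is exactly where the hypothesis $n \geq 3k+2$ enters. First observe that if $x>1$, then for each $K\in\cL$ the first-order count $(x-1)C$ is strictly positive, so every element of $\cL$ has at least one partner in $\cL$ disjoint from it; in particular $\cL$ contains a pair $K_1,K_2$ of projectively disjoint $k$-spaces. The goal is then to count $h(K_1,K_2) := |\{M\in\cL : M\cap K_1 = M\cap K_2 = \emptyset\}|$ and to show it is negative when $1<x<2$. To obtain a closed form for $h$ I would use the algebraic meaning of the Cameron-Liebler property, namely that $\chi_\cL$ lies in the sum $V_0 \oplus V_1$ of the two smallest eigenspaces of the association scheme of $k$-spaces of $\PG(n,q)$ (the Boolean degree $1$ condition). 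Consequently $\langle \chi_\cL, w\rangle$ depends only on the projections of a test vector $w$ onto $V_0$ and $V_1$. Taking $w$ to be the indicator of ``disjoint from both $K_1$ and $K_2$,'' its $V_0$-projection is the global fraction of such $k$-spaces, while its $V_1$-projection is an explicit function of the mutual position of $K_1$ and $K_2$. This yields an expression of the shape $h(K_1,K_2) = (x - \chi(K_1) - \chi(K_2) + \delta)\,C'$, where $C'>0$ is the number of $k$-spaces disjoint from two fixed disjoint $k$-spaces and $\delta$ is a correction term coming from the $V_1$-component. Here $n \geq 3k+2$ is precisely what guarantees that three pairwise disjoint $k$-spaces exist (as $3(k+1)\le n+1$), equivalently $C'>0$, so that $h$ is a genuine scaled count. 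Substituting $\chi(K_1)=\chi(K_2)=1$ gives $h = (x-2+\delta)C'$, and it remains to see that this is negative for $1<x<2$.

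The main obstacle is the last step: pinning down the $V_1$-contribution $\delta$ and verifying that $h<0$ on the whole open interval $]1,2[$. This requires the explicit eigenvalues of the Grassmann scheme together with a careful estimate showing that $\delta$ is too small to compensate for the deficit $x-2<0$; it is exactly at this point that the dimension hypothesis $n\ge 3k+2$ is used a second time, both to keep $C'$ positive and to bound the cross-term $\delta$. By contrast, the first assertion needs only the zeroth-to-first order count and no lower bound on $n$ beyond $n\ge 2k+1$, which explains why the interval $]1,2[$ is excluded only under the stronger hypothesis.
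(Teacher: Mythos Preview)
This statement is quoted from \cite[Theorem~4.2]{DBD2018} and is not proved in the present paper, so there is no in-paper argument to compare against. I therefore comment only on the internal merits of your proposal.

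Your treatment of the interval $]0,1[$ is correct and complete: picking $K\in\cL$ and reading off $(x-1)C<0$ from Theorem~\ref{EquivalenceProj}(2) is exactly the standard one-line argument.

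For the interval $]1,2[$ there is a genuine gap. You correctly locate a disjoint pair $K_1,K_2\in\cL$ and set up the second-order count $h(K_1,K_2)$, but the whole weight of the argument rests on the value of the correction term you call $\delta$, and you do not compute it. The claim that $\chi_\cL\in V_0\oplus V_1$ lets you replace the test vector $w$ (the indicator of ``disjoint from both $K_i$'') by its projection onto $V_0\oplus V_1$ is fine; however, the $V_1$-component of $w$ is not obviously small, and without an explicit formula you cannot conclude that $h=(x-2+\delta)C'$ is negative on all of $]1,2[$. In fact, for the analogous first-order count the identity $(x-\chi(K))C$ has \emph{no} error term precisely because the indicator of ``disjoint from $K$'' already lies in $V_0\oplus V_1$ (it is a constant minus a sum of point-pencils through the points of $K$); the product of two such indicators generally does not lie in $V_0\oplus V_1$, so a nonzero $\delta$ is to be expected and must be controlled, not assumed away. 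Your remark that $n\ge 3k+2$ guarantees $C'>0$ is correct, but that alone does not bound $\delta$. As written, the second half is a proof sketch with its decisive computation missing.
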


\section{Cameron-Liebler $k$-sets in $\AG(n,q)$}\label{sec:AGgen}

 \begin{Def}
A Cameron-Liebler $k$-set $\mathcal{L}$ in $\AG(n,q)$  is a set of $k$-spaces such that for its characteristic vector $\chi_{\mathcal{L}}$, it holds that  $\chi_{\mathcal{L}}\in \text{Im}(A_n^T)$. Here we say that $\mathcal{L}$ has parameter $x:=|\mathcal{L}|/\gauss{n}{k}$.
\end{Def}

Due to Lemma \ref{SpreadsAG(n,q)}, we know that for every value of $n$, the affine space $\AG(n,q)$ contains $k$-spreads. 
By Lemma \ref{2to8}, we find that Cameron-Liebler $k$-sets have a constant intersection number with $k$-spreads. This number will be equal to the parameter $x$ of the Cameron-Liebler $k$-set. This is proven in the next lemma.

\begin{Le}\label{SizeAffCLkSet}\label{ConstInt}
Suppose that $\mathcal{L}$ is a Cameron-Liebler $k$-set in $\AG(n,q)$, then it holds for every $k$-spread $\mathcal{S}$ that
$$|\mathcal{L}\cap \mathcal{S}|=x,$$
where $x$ is the parameter of $\mathcal{L}$.
\end{Le}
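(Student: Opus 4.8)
The plan is to reduce the claimed constant intersection number to the already-established machinery of Lemma \ref{2to8} together with the size computation coming from the definition of the parameter $x$. By definition of a Cameron-Liebler $k$-set in $\AG(n,q)$, the characteristic vector $\chi_{\mathcal{L}}$ lies in $\text{Im}(A_n^T)$. By the remark following Lemma \ref{2to8}, $\text{Im}(A_n^T)=(\ker(A_n))^\perp$, so $\chi_{\mathcal{L}} \in (\ker(A_n))^\perp$. This places us precisely in the hypothesis of Lemma \ref{2to8}, which immediately yields that there is a \emph{fixed} integer, say $x'$, with $|\mathcal{L}\cap \mathcal{S}| = x'$ for every affine $k$-spread $\mathcal{S}$. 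Thus the only substantive content left is to identify this constant $x'$ with the parameter $x = |\mathcal{L}|/\gauss{n}{k}_q$.

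To pin down $x'=x$, I would count incidences between the elements of $\mathcal{L}$ and a suitable collection of $k$-spreads, or equivalently use a double-counting/averaging argument. The cleanest route is to observe that every affine $k$-space of $\AG(n,q)$ lies in the same number $N$ of $k$-spreads of a fixed type (for instance the type II spreads of Lemma \ref{SpreadsAG(n,q)}, or more robustly one averages over all $k$-spreads in a transitive family). If $\mathcal{T}$ denotes such a family of $k$-spreads, then counting pairs $(K,\mathcal{S})$ with $K\in\mathcal{L}$, $\mathcal{S}\in\mathcal{T}$, $K\in\mathcal{S}$ in two ways gives
\begin{equation*}
|\mathcal{L}|\cdot N = \sum_{\mathcal{S}\in\mathcal{T}} |\mathcal{L}\cap \mathcal{S}| = |\mathcal{T}|\cdot x'.
\end{equation*}
Since the total number of affine $k$-spaces is $|\Phi_k|$ and each spread has $q^{n-k}$ elements, counting pairs $(K,\mathcal{S})$ over all $K$ likewise gives $|\Phi_k|\cdot N = |\mathcal{T}|\cdot q^{n-k}$. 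Dividing, $x' = |\mathcal{L}|\cdot q^{n-k}/|\Phi_k|$, and a direct computation using $|\Phi_k| = q^{n-k}\gauss{n}{k}_q$ (the number of affine $k$-spaces equals $q^{n-k}$ times the number of parallel classes, which is $\gauss{n}{k}_q$) reduces this to $x' = |\mathcal{L}|/\gauss{n}{k}_q = x$.

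I expect the main obstacle to be justifying the regularity needed for the averaging step, namely that one can choose a family $\mathcal{T}$ of $k$-spreads in which every affine $k$-space occurs equally often. One must verify that $\AG(n,q)$ admits such a balanced family; the type I, II, III spreads of Lemma \ref{SpreadsAG(n,q)} provide enough spreads, and the affine group $\AGL(n,q)$ acts transitively on affine $k$-spaces, so averaging over an orbit of a single spread under $\AGL(n,q)$ automatically produces the required uniform covering. An even simpler alternative, which sidesteps the counting entirely, is to note that the identity $|\mathcal{L}\cap\mathcal{S}|=x'$ holds for \emph{every} spread and then evaluate it on a single conveniently chosen spread together with the defining relation $\chi_{\mathcal{L}}\in\text{Im}(A_n^T)$: writing $\chi_{\mathcal{L}} = A_n^T v$ for some vector $v$, the inner product of $\chi_{\mathcal{L}}$ with the characteristic vector $\chi_{\mathcal{S}}$ of a spread equals $v^T (A_n \chi_{\mathcal{S}})$, and $A_n\chi_{\mathcal{S}}$ is the all-ones vector on affine points since a spread partitions the affine point set; hence $|\mathcal{L}\cap\mathcal{S}| = v^T \mathbf{1}$ is manifestly independent of $\mathcal{S}$, and its value is recovered as $x$ by the global incidence count. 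I would present the averaging argument as the primary proof, since it transparently delivers the numerical value $x$ rather than merely reconfirming constancy.
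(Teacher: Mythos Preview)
Your proposal is correct and follows essentially the same approach as the paper: invoke Lemma~\ref{2to8} for constancy, then double count over a family of spreads to identify the constant with $x$. The paper carries this out with the type~II spreads specifically (where each affine $k$-space lies in exactly one such spread, so $N=1$ and $|\mathcal{T}|=\gauss{n}{k}_q$), which is precisely the concrete instance you mention; your extra discussion of general transitive families and the $A_n^T v$ alternative is sound but unnecessary for the argument.
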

\begin{proof}
 Due to Lemma \ref{2to8}, it follows that for every $k$-spread $\mathcal{S}$ it holds that $|\cL\cap \mathcal{S}|=c$, for a certain fixed integer $c$. To prove that $c=x$, we double count the pairs  $(K,\mathcal{S})$, where $\mathcal{S}$ is a $k$-spread of type II and $K \in \mathcal{L}\cap \mathcal{S}$. So we obtain that
$$x\gauss{n}{k}_q\cdot 1=\gauss{n}{k}_q\cdot c.$$
This proves the lemma.
\end{proof}
Remark that this statement holds for every $n$ and $k$, since in each case we have these $k$-spreads. We now give some basic properties.
\begin{Le}\label{CLkSetProp}\label{Chapter6Porperties}
	Consider \(\mathcal{L}\) and \(\mathcal{L}'\) to be Cameron-Liebler $k$-sets with parameter $x$ and $x'$ both in $\AG(n,q)$ or both in $\PG(n,q)$ respectively, then the following properties hold.
	\begin{enumerate}
		\item If $\cL$ is a Cameron-Liebler $k$-set in $\AG(n,q)$ or $\PG(n,q)$, then we have that $0 \leq x\leq q^{n-k}$ or $0 \leq x\leq \frac{q^{n+1}-1}{q^{k+1}-1}$ respectively.
		\item If \(\mathcal{L} \cap \mathcal{L}'= \emptyset\), then \(\mathcal{L}\cup \mathcal{L}'\) is a Cameron-Liebler $k$-set of parameter $x+x'$.
		\item If \(\mathcal{L}' \subseteq \mathcal{L}\), then \(\mathcal{L}\setminus \mathcal{L}'\) is a Cameron-Liebler $k$-set of parameter $x-x'$.
		\item If $\cL$ is a Cameron-Liebler $k$-set in $\AG(n,q)$ or $\PG(n,q)$, then the complement of $\mathcal{L}$ in $\AG(n,q)$ or $\PG(n,q)$ is a Cameron-Liebler $k$-set with parameter \(q^{n-k}-x\) or $\frac{q^{n+1}-1}{q^{k+1}-1}-x$ respectively.
	\end{enumerate}
\end{Le}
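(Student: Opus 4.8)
The plan is to treat the four parts together, exploiting that both $\text{Im}(A_n^T)$ and $\text{Im}(P_n^T)$ are linear subspaces, so that the defining condition on characteristic vectors is preserved under sums and differences. Parts (2) and (3) are then immediate: when $\cL \cap \cL' = \emptyset$ we have $\chi_{\cL \cup \cL'} = \chi_\cL + \chi_{\cL'}$, and when $\cL' \subseteq \cL$ we have $\chi_{\cL \setminus \cL'} = \chi_\cL - \chi_{\cL'}$; in each case the resulting vector still lies in the relevant image, and the parameter follows by dividing the cardinality $|\cL| \pm |\cL'| = (x \pm x')\gauss{n}{k}_q$ by $\gauss{n}{k}_q$.

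The heart of the argument is part (4), for which I would first show that the set of all $k$-spaces is itself a Cameron-Liebler $k$-set. Summing all the point-rows of $A_n$ produces the vector whose entry at a given $k$-space counts the affine points it contains, and every affine $k$-space contains exactly $q^k$ points; hence this sum equals $q^k \mathbf{1}$, so the all-ones vector $\mathbf{1}$ lies in $\text{Im}(A_n^T)$. Applying part (3) with $\cL'$ replaced by $\cL$ and $\cL$ replaced by the full set of $k$-spaces then shows that the complement of $\cL$ is again a Cameron-Liebler $k$-set. To identify its parameter I would invoke Lemma \ref{SizeAffCLkSet}: the full set meets every $k$-spread in all of its $q^{n-k}$ members, so the full set has parameter $q^{n-k}$, whence the complement has parameter $q^{n-k} - x$. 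The projective case is analogous, replacing $q^k$ by the number $\frac{q^{k+1}-1}{q-1}$ of points on a projective $k$-space to obtain $\mathbf{1} \in \text{Im}(P_n^T)$; since $\PG(n,q)$ need not admit a $k$-spread, I would instead compute the parameter of the full set directly as $\gauss{n+1}{k+1}_q / \gauss{n}{k}_q = \frac{q^{n+1}-1}{q^{k+1}-1}$, giving complement parameter $\frac{q^{n+1}-1}{q^{k+1}-1} - x$.

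Finally, part (1) follows. The lower bound $x \geq 0$ is immediate from $x = |\cL|/\gauss{n}{k}_q$ and $|\cL| \geq 0$. For the upper bound I would apply part (4): the complement of $\cL$ is a Cameron-Liebler $k$-set, so its parameter $q^{n-k} - x$ (respectively $\frac{q^{n+1}-1}{q^{k+1}-1} - x$) is also a nonnegative cardinality ratio, forcing $x \leq q^{n-k}$ (respectively $x \leq \frac{q^{n+1}-1}{q^{k+1}-1}$).

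The only genuine obstacle is the membership of the all-ones vector in the image, that is, verifying that the full $k$-space set is a Cameron-Liebler $k$-set, together with the correct computation of its parameter. The subtlety is that the affine argument can lean on the spread intersection of Lemma \ref{SizeAffCLkSet}, whereas the projective case must compute the total count $\gauss{n+1}{k+1}_q$ directly because $\PG(n,q)$ may have no $k$-spread. Everything else reduces to linearity of the image subspace and elementary counting.
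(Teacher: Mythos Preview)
Your proposal is correct and follows essentially the same line as the paper: parts (2)--(4) rest on the linearity of $\text{Im}(A_n^T)$ (respectively $\text{Im}(P_n^T)$), the all-ones vector lies in that image since the row-sum of the incidence matrix is a constant multiple of $\mathbf{1}$, and the parameter bounds in (1) come from Lemma~\ref{SizeAffCLkSet} together with nonnegativity of the complement's parameter. The paper's own proof is a one-line citation of Lemma~\ref{SizeAffCLkSet} and \cite[Lemma 3.1]{DBD2018}, so your write-up is considerably more explicit---in particular your observation that the projective parameter of the full set must be computed directly (since $\PG(n,q)$ need not admit a $k$-spread) is a detail the paper leaves implicit.
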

\begin{proof}
	This lemma follows due to Lemma \ref{SizeAffCLkSet} and \cite[Lemma 3.1]{DBD2018} for the projective case.
\end{proof}

We now give some general results that will give connections between Cameron-Liebler $k$-sets in $\PG(n,q)$ and $\AG(n,q)$. These results will be similar to the results obtained in \cite{DMSS}. 

Here we prove Theorems \ref{H6ProjToAff} and \ref{CLkSpaceBasic_plus_lines} from the introduction.
\begin{proof}[Proof of Theorem \ref{H6ProjToAff}]
Suppose that \(\mathcal{L}\) is a Cameron-Liebler $k$-set in $\PG(n,q)$ that misses the set of $k$-spaces in \(\pi_\infty\).  So the characteristic vector of $\cL$ in $\PG(n,q)$ lies inside Im($P_n^T$)$= (\ker(P_n))^\perp$. Then, by Theorem \ref{chiUpToDown}, we obtain that for its characteristic vector $\chi_{\mathcal{L}}$ in $\AG(n,q)$ it holds that \(\chi_{\mathcal{L}} \in (\ker(A_n))^\perp=\text{ Im}(A_n^T)\). Here \(A_n\) is the point-($k$-space) incidence matrix of the affine space from Construction \ref{IncidenceMatrix}.  Due to the size of $\cL$, the parameter remains the same. This proves the assertion. 
\end{proof}
\begin{proof}[Proof of Theorem \ref{CLkSpaceBasic_plus_lines}]
Consider the characteristic vector $\chi_\mathcal{L}$ of the $k$-set  $\mathcal{L}$, then we know that $\chi_\mathcal{L}\in$ Im($A_n^T$). Here $A_n$ is the point-($k$-space) incidence matrix of $\AG(n,q)$. Due to Construction \ref{IncidenceMatrix}, we know that 
$$\begin{pmatrix} \chi_\mathcal{L} \\
\bar{0} \end{pmatrix} \in \text{ Im}(P_n^T),$$
with $\bar{0}$ the vector of the correct dimension that only contains zeroes. Note that $\binom{\chi_\mathcal{L}}{
\bar{0}}$ is in fact the characteristic vector of $\mathcal{L}$ in $\PG(n,q)$. So $\mathcal{L}$ is by definition a Cameron-Liebler $k$-set in $\PG(n,q)$. Due to the size of $\cL$, the parameter remains the same.
\end{proof}

\begin{St}\label{ProgToAff}
	Suppose that $\mathcal{L}$ is  a Cameron-Liebler $k$-set of parameter $x$ in $\PG(n,q)$. Then   $\mathcal{L}$ is a Cameron-Liebler $k$-set in $\AG(n,q)$ of the same parameter $x$ if and only if \(\mathcal{L}\) is skew to the set of $k$-spaces in the hyperplane at infinity $\pi_\infty$ of the affine space.
\end{St}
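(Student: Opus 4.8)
The plan is to recognize this statement as essentially the combination of Theorems \ref{H6ProjToAff} and \ref{CLkSpaceBasic_plus_lines}, turned into a biconditional, and to argue both implications directly from the block structure of the incidence matrix in Construction \ref{IncidenceMatrix}. First I would fix the ordering of rows and columns of $P_n$ so that the affine points and affine $k$-spaces come first, giving
\begin{equation*}
P_n= \begin{bmatrix} A_n & \bar{0} \\ B_2 & P_{n-1} \end{bmatrix}.
\end{equation*}
In this ordering, $\mathcal{L}$ being skew to the $k$-spaces in $\pi_\infty$ means precisely that $\chi_{\mathcal{L}}$, as a vector indexed by all of $\Pi_k$, is zero on the coordinates corresponding to $k$-spaces contained in $\pi_\infty$; equivalently $\chi_{\mathcal{L}} = \binom{\chi'}{\bar 0}$, where $\chi'$ is the characteristic vector of $\mathcal{L}$ viewed inside $\AG(n,q)$.

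For the backward implication ($\Leftarrow$), I would assume $\mathcal{L}$ is skew to the $k$-spaces at infinity and invoke Theorem \ref{H6ProjToAff} directly: a Cameron-Liebler $k$-set of $\PG(n,q)$ with parameter $x$ that contains no $k$-spaces in the hyperplane $\pi_\infty$ is a Cameron-Liebler $k$-set of $\AG(n,q)$ with the same parameter $x$. This is exactly the content of that theorem (its proof rests on Theorem \ref{chiUpToDown}, which sends $\chi_{\mathcal{L}} \in (\ker P_n)^\perp$ to its affine restriction in $(\ker A_n)^\perp$), so nothing new is needed here beyond citing it.

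For the forward implication ($\Rightarrow$), I would argue the contrapositive together with a size count. If $\mathcal{L}$ is a Cameron-Liebler $k$-set of $\PG(n,q)$ of parameter $x$ that is \emph{also} a Cameron-Liebler $k$-set of $\AG(n,q)$ of the same parameter $x$, then by definition $|\mathcal{L} \cap \Phi_k| = x \gauss{n}{k}_q$, since its affine parameter is $x$ and the affine parameter counts exactly the $k$-spaces of $\mathcal{L}$ not in $\pi_\infty$. But its projective parameter is also $x$, so $|\mathcal{L}| = x \gauss{n}{k}_q$ as well. Comparing the two counts forces $|\mathcal{L} \setminus \Phi_k| = 0$, i.e.\ $\mathcal{L}$ contains no $k$-space of $\pi_\infty$, which is exactly the skewness condition. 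Conversely, by Theorem \ref{CLkSpaceBasic_plus_lines} any affine Cameron-Liebler $k$-set automatically extends to a projective one of the same parameter, so the hypothesis that $\mathcal{L}$ is affine-Cameron-Liebler of parameter $x$ is consistent, and the count above is the only thing that pins down the geometry.

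The main obstacle, and the point deserving the most care, is the bookkeeping of the parameters: one must be sure that the affine parameter $x = |\mathcal{L} \cap \Phi_k| / \gauss{n}{k}_q$ and the projective parameter $x = |\mathcal{L}| / \gauss{n}{k}_q$ are being compared correctly, and that the normalizing factor $\gauss{n}{k}_q$ is genuinely the same in both settings (it is, since it is the number of $k$-spaces through a fixed point, which coincides for a point of $\AG(n,q)$ and its projective counterpart). Once this size comparison is made explicit, the equivalence is immediate: equal parameters force the set to lose no $k$-spaces at infinity, and skewness at infinity feeds back into Theorem \ref{H6ProjToAff} to recover the affine Cameron-Liebler property with the matching parameter.
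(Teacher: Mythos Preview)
Your proof is correct. The backward implication is identical to the paper's: both simply invoke Theorem \ref{H6ProjToAff}.

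For the forward implication you take a genuinely different, more elementary route. The paper lets $\mathcal{L}'$ be the affine restriction of $\mathcal{L}$, uses Theorem \ref{CLkSpaceBasic} to view $\mathcal{L}'$ again as a Cameron--Liebler $k$-set of $\PG(n,q)$ with parameter $x$, and then applies the subtraction property from Lemma \ref{CLkSetProp} to conclude that $\mathcal{L}\setminus\mathcal{L}'$ is a Cameron--Liebler $k$-set of parameter $0$, hence empty. You instead just compare cardinalities: both the projective and the affine parameter equal $x$, and since both are normalized by the same quantity $\gauss{n}{k}_q$, one gets $|\mathcal{L}| = |\mathcal{L}\cap\Phi_k|$ directly, forcing $\mathcal{L}$ to miss all $k$-spaces in $\pi_\infty$. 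Your argument is shorter and avoids invoking Theorem \ref{CLkSpaceBasic} altogether; the paper's argument is more structural and would survive even if one redefined the parameter via a different normalization. Two minor remarks: the phrase ``argue the contrapositive'' is not what you actually do (you give a direct size argument), and the sentence beginning ``Conversely, by Theorem \ref{CLkSpaceBasic_plus_lines}\dots'' is superfluous for the implication being proved and can be dropped.
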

\begin{proof}
	Suppose that \(\mathcal{L}\) is a Cameron-Liebler $k$-set in $\PG(n,q)$ that misses the set of $k$-spaces in \(\pi_\infty\). Then, by Theorem \ref{H6ProjToAff}, we obtain that $\mathcal{L}$ is a Cameron-Liebler $k$-set in $\AG(n,q)$ that has the same parameter $x$.
	
Let $\mathcal{L}$ be a Cameron-Liebler $k$-set in $\PG(n,q)$ whose restriction is a Cameron-Liebler $k$-set in $\AG(n,q)$ of the same parameter $x$. Then we can define the restriction of $\mathcal{L}$ to $\AG(n,q)$ by $\mathcal{L}'$. Using Theorem \ref{CLkSpaceBasic}, we know that $\mathcal{L'}$ is a Cameron-Liebler $k$-set in $\PG(n,q)$. So, by Lemma \ref{CLkSetProp}, it follows that $\mathcal{L}\setminus \mathcal{L}'$ is a Cameron-Liebler $k$-set of parameter $x=0$ in $\PG(n,q)$. This Cameron-Liebler $k$-set would only contain $k$-spaces in the hyperplane at infinity. So clearly $\mathcal{L}\setminus \mathcal{L}'= \emptyset$. Thus $\mathcal{L}$ does not contain $k$-spaces at infinity.
\end{proof}

\begin{St}\label{xAffineToHighProj}
	If there exists an affine Cameron-Liebler $k$-set with parameter $x$ in $\AG(n,q)$, then there exists a Cameron-Liebler $k$-set of parameter $x+\frac{q^{n-k}-1}{q^{k+1}-1}$ in the projective closure $\PG(n,q)$.
\end{St}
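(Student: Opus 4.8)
The plan is to realize the extra parameter $\frac{q^{n-k}-1}{q^{k+1}-1}$ as the parameter of the trivial Cameron-Liebler $k$-set consisting of all $k$-spaces inside the hyperplane $\pi_\infty$, and then to glue this to the projective extension of $\cL$ using the disjoint-union lemma. The starting observation is the Gaussian-binomial identity $\frac{q^{n-k}-1}{q^{k+1}-1} = \gauss{n}{k+1}_q / \gauss{n}{k}_q$. By Example \ref{TrivialEx}(2), the set $\mathcal{H}$ of all $k$-spaces contained in a fixed hyperplane is a Cameron-Liebler $k$-set of $\PG(n,q)$, and the identity above shows that its parameter is exactly $\frac{q^{n-k}-1}{q^{k+1}-1}$. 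Since any hyperplane serves, I would take this fixed hyperplane to be $\pi_\infty$.

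First I would apply Theorem \ref{CLkSpaceBasic_plus_lines} to the affine Cameron-Liebler $k$-set $\cL$: this upgrades $\cL$ to a Cameron-Liebler $k$-set of the projective closure $\PG(n,q)$ with the same parameter $x$. Crucially, every element of $\cL$ is an affine $k$-space, hence is not contained in $\pi_\infty$; therefore, viewed inside $\PG(n,q)$, the sets $\cL$ and $\mathcal{H}$ are disjoint. I would then invoke Lemma \ref{CLkSetProp}(2), which guarantees that the union of two disjoint Cameron-Liebler $k$-sets is again a Cameron-Liebler $k$-set whose parameter is the sum of the two parameters. Applied to $\cL$ and $\mathcal{H}$, this produces the Cameron-Liebler $k$-set $\cL \cup \mathcal{H}$ of $\PG(n,q)$ with parameter $x + \frac{q^{n-k}-1}{q^{k+1}-1}$, which is the assertion.

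There is no genuinely hard step here; the argument is a recombination of facts already established in the excerpt. The two points that deserve a line of justification are the identification of $\frac{q^{n-k}-1}{q^{k+1}-1}$ with the hyperplane parameter, which is the short Gaussian-binomial computation above, and the disjointness of the projective extension of $\cL$ from $\mathcal{H}$, which is immediate since affine $k$-spaces are by definition not contained in the hyperplane at infinity. Thus the only care required is bookkeeping rather than any substantive obstacle.
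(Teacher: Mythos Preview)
Your proof is correct and is essentially identical to the paper's own argument: apply Theorem \ref{CLkSpaceBasic_plus_lines} to view $\cL$ projectively, take $\mathcal{H}$ to be the Cameron-Liebler $k$-set of all $k$-spaces in $\pi_\infty$ from Example \ref{TrivialEx}(2), and combine via Lemma \ref{CLkSetProp}(2). The only difference is cosmetic: you spell out the Gaussian-binomial computation and the disjointness, whereas the paper leaves these implicit.
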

\begin{proof}
Due to Theorem \ref{CLkSpaceBasic}, we know that $\mathcal{L}$ is a Cameron-Liebler $k$-set in $\PG(n,q)$.
Using Lemma \ref{CLkSetProp} and Example \ref{TrivialEx}, we can extend every Cameron-Liebler $k$-set in $\AG(n,q)$ as follows: \(\mathcal{L}':= \mathcal{L}\cup \{K \in \Pi_k   \mid K \subseteq \pi_\infty\}\). This set is a Cameron-Liebler $k$-set in $\PG(n,q)$ of parameter $x+\frac{q^{n-k}-1}{q^{k+1}-1}$.
\end{proof}
\begin{Opm}
 If we now combine Theorems  \ref{ProgToAff} and \ref{xAffineToHighProj}, we find another interesting observation. Recall that every Cameron-Liebler $k$-set of parameter $x$ in $\AG(n,q)$ can be extended by adding all $k$-spaces at infinity to a Cameron-Liebler $k$-set of parameter $x+\frac{q^{n-k}-1}{q^{k+1}-1}$ in $\PG(n,q)$. But the other way also holds. Suppose we have a Cameron-Liebler $k$-set of parameter $x+\frac{q^{n-k}-1}{q^{k+1}-1}$ in $\PG(n,q)$ that contains all the $k$-spaces at infinity. Then, by Lemma \ref{CLkSetProp}, we can remove all the $k$-spaces at infinity and obtain a Cameron-Liebler $k$-set of parameter $x$ in $\AG(n,q)$.
\end{Opm}

\subsection{Equivalent definitions}
Our goal will be to prove the following theorem, which presents equivalent definitions for Cameron-Liebler $k$-sets in $\AG(n,q)$.
\begin{St}\label{EquivalencesGeneral}\label{EquivalencesAG(n,q)lines}
	Consider the affine space $\AG(n,q)$, for $n\geq 2k+1$, and let \(\mathcal{L}\) be a set of $k$-spaces such that $|\mathcal{L}|=x$\mbox{$\scriptsize{\begin{bmatrix}
	n \\
	k
	\end{bmatrix}_q}$} for a positive integer $x$. Then the following properties are equivalent.
	\begin{enumerate}
		\item \(\mathcal{L}\) is a Cameron-Liebler $k$-set in $\AG(n,q)$.
		\item For every $k$-spread $\mathcal{S}$, it holds that $|\mathcal{L}\cap \mathcal{S}|=x.$
		\item For every pair of conjugated switching $k$-sets \(\mathcal{R}\) and \(\mathcal{R}'\), \(|\mathcal{L} \cap \mathcal{R} |= |\mathcal{L} \cap \mathcal{R}'|\).
\end{enumerate}
If $k=1$ and we thus consider Cameron-Liebler line classes, then the following property is equivalent to the previous ones.
\begin{enumerate}
          \item[4.] For every line $\ell$, the number of elements of \(\mathcal{L}\) affinely disjoint to $\ell$ is equal to
	\begin{equation} \label{EqLinesEq} \left( q^2\begin{bmatrix}
	n-2\\
	1
	\end{bmatrix}_q +1 \right) (x- \chi_{\mathcal{L}}(\ell))\end{equation}
	and through every point at infinity there are exactly $x$ lines of \(\mathcal{L}\).

	\end{enumerate}
\end{St}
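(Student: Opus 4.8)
The plan is to use the affine--projective dictionary as a backbone. Since $\mathcal L$ consists only of affine $k$-spaces, Theorems \ref{CLkSpaceBasic} and \ref{H6ProjToAff} (equivalently Theorem \ref{ProgToAff}) show that $\mathcal L$ is a Cameron-Liebler $k$-set of $\AG(n,q)$ if and only if, viewed as a set of projective $k$-spaces skew to $\pi_\infty$, it is a Cameron-Liebler $k$-set of $\PG(n,q)$. This lets me import the projective equivalence Theorem \ref{EquivalenceProj}. I would prove the cycle $(1)\Rightarrow(2)\Rightarrow(3)\Rightarrow(1)$ for general $k$, and then establish $(1)\Leftrightarrow(4)$ separately for $k=1$.

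The implication $(1)\Rightarrow(2)$ is exactly Lemma \ref{SizeAffCLkSet}. For $(2)\Rightarrow(3)$ I would complete switching sets to spreads: given a pair of conjugated switching $k$-sets $\mathcal R,\mathcal R'$ covering the same affine point set $P_0$, adjoin a common partial $k$-spread $\mathcal T$ covering exactly the complementary affine points, so that $\mathcal S=\mathcal R\cup\mathcal T$ and $\mathcal S'=\mathcal R'\cup\mathcal T$ are $k$-spreads; then $(2)$ gives $|\mathcal L\cap\mathcal S|=|\mathcal L\cap\mathcal S'|=x$, and subtracting the common contribution $|\mathcal L\cap\mathcal T|$ yields $|\mathcal L\cap\mathcal R|=|\mathcal L\cap\mathcal R'|$. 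Here one must check that such a completion $\mathcal T$ exists.

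The heart of the argument is $(3)\Rightarrow(1)$, which I would phrase as a single spanning statement: the kernel $\ker A_n$ is spanned by the differences $\chi_{\mathcal R}-\chi_{\mathcal R'}$ of conjugated switching $k$-sets. Granting this, condition $(3)$ says $\chi_{\mathcal L}$ is orthogonal to every such difference, hence $\chi_{\mathcal L}\in(\ker A_n)^{\perp}=\text{Im}(A_n^{T})$, which is $(1)$; the parameter is $x$ by the size hypothesis. (The reverse direction $(1)\Rightarrow(3)$ is immediate, since both partial spreads of a switching set cover $P_0$ exactly once, so $\chi_{\mathcal R}-\chi_{\mathcal R'}\in\ker A_n$, against which $\chi_{\mathcal L}$ is orthogonal.) I expect the spanning statement to be the main obstacle. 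The natural idea is to transfer it from the projective analogue, where the generation of $\ker P_n$ by projective switching-set differences is the content of Theorem \ref{EquivalenceProj}; but the naive lift $v\mapsto\binom{v}{0}$ of $v\in\ker A_n$ need not lie in $\ker P_n$, because by the block form of Construction \ref{IncidenceMatrix} one has $P_n\binom{v}{0}=\binom{A_nv}{B_2v}=\binom{0}{B_2v}$ and $B_2v$ need not vanish. Thus the interaction with the $k$-spaces at infinity must be controlled directly --- either by constructing enough affine switching sets by hand (using the explicit spread types of Lemma \ref{SpreadsAG(n,q)}) or by matching dimensions against an independent computation of $\dim\,\text{Im}(A_n^{T})$ (for $k=1$ via the eigenspace description of the affine-lines scheme).

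For the line case $(1)\Leftrightarrow(4)$, I would run everything through the projective disjointness criterion, i.e.\ Theorem \ref{EquivalenceProj}(2) with $k=1$, which counts the lines of $\mathcal L$ projectively disjoint from a given line as $(x-\chi_{\mathcal L}(\ell))q^{2}\gauss{n-2}{1}_q$. The bridge is that an affine line is affinely disjoint from $\ell$ precisely when it is either projectively disjoint from $\ell$ or passes through the infinite point $P_\ell=\ell\cap\pi_\infty$ and is distinct from $\ell$; these two cases are disjoint. The second clause of $(4)$ --- exactly $x$ lines of $\mathcal L$ through each infinite point --- is the intersection number with the type II spread of all affine lines through a point of $\pi_\infty$, hence already contained in $(2)$. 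Adding the two contributions gives $(x-\chi_{\mathcal L}(\ell))q^{2}\gauss{n-2}{1}_q+(x-\chi_{\mathcal L}(\ell))$, which is \eqref{EqLinesEq}; conversely, $(4)$ recovers the projective disjoint count for affine lines, and a short Gaussian-coefficient computation ($x\gauss{n}{1}_q-(q+1)x=xq^{2}\gauss{n-2}{1}_q$) confirms the disjoint count for lines at infinity as well, so $\mathcal L$ meets Theorem \ref{EquivalenceProj}(2) for every projective line and is a Cameron-Liebler line class projectively, hence affinely by Theorem \ref{H6ProjToAff}. The only delicate point is the clean separation of affinely disjoint lines according to their behaviour at $P_\ell$; the remaining arithmetic is routine.
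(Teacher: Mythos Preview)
Your handling of the $k=1$ equivalence with (4) matches the paper's argument. For general $k$, however, your cycle $(1)\Rightarrow(2)\Rightarrow(3)\Rightarrow(1)$ has two real gaps. In $(2)\Rightarrow(3)$, the existence of the completion $\mathcal T$ is equivalent to asking that every partial $k$-spread of $\AG(n,q)$ extend to a full $k$-spread; this fails in general (maximal partial spreads that are not spreads exist), so the argument as written does not go through. In $(3)\Rightarrow(1)$, you correctly isolate the spanning of $\ker A_n$ by switching differences as the crux and correctly observe that the naive lift to $\ker P_n$ fails because of the $B_2$-block, but you do not actually establish the spanning; neither of your two suggested routes is carried out.

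The paper avoids both obstacles by running the cycle in the opposite direction, $(1)\Rightarrow(3)\Rightarrow(2)\Rightarrow(1)$. Then $(1)\Rightarrow(3)$ is the one-line orthogonality argument you already gave parenthetically, and $(3)\Rightarrow(2)$ is immediate since $\mathcal S_1\setminus\mathcal S_2$ and $\mathcal S_2\setminus\mathcal S_1$ form a switching pair for any two spreads (a double count over type~II spreads then pins the common value to $x$). The only hard step becomes $(2)\Rightarrow(1)$, and the key idea you are missing is Theorem~\ref{SubspaceProp2}: property $(2)$ is inherited by the restriction of $\mathcal L$ to every affine $(2k+1)$-subspace $\tau$, because any $k$-spread of $\tau$ extends to one of $\AG(n,q)$ by a fixed type~II family outside $\tau$. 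In dimension $2k+1$ the projective closure admits $k$-spreads, so Theorem~\ref{EquivalenceProj}(4) together with Theorem~\ref{H6ProjToAff} makes $\mathcal L\cap[\tau]_k$ a Cameron--Liebler $k$-set in $\tau$; hence $\chi_{\mathcal L\cap[\tau]_k}\in\text{Im}(A_\tau^T)$ with \emph{uniquely} determined point-coefficients (full row rank of $A_\tau$). These local coefficients agree on overlaps of $(2k+1)$-spaces, so they glue to a global expression of $\chi_{\mathcal L}$ as a combination of affine point-pencils, i.e.\ $\chi_{\mathcal L}\in\text{Im}(A_n^T)$.
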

To do this, we will need the following statement. Note that for every subspace $\tau$ of $\AG(n,q)$, we will denote $[\tau]_k:=\{K \mid K \in \Phi_k, K\subseteq\tau\}$.

\begin{St}\label{SubspaceProp2}
Let $\mathcal{L}$ be a collection of $k$-spaces in $\AG(n,q)$ such that property (2) of Theorem \ref{EquivalencesGeneral} holds. Suppose that $\tau_A$ is an arbitrary $i$-dimensional subspace in AG($n,q$), with $i\geq \max\{k+1, 3\}$. Then property (2) also holds for $\mathcal{L}\cap [\tau_A]_k$  in the (affine) subspace $\tau_A$ with respect to the $k$-spreads in $\tau_A$.
\end{St}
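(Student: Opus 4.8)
The plan is to show that an arbitrary $k$-spread $\mathcal{S}_\tau$ of the subspace $\tau_A$ can always be completed to a $k$-spread of the whole space $\AG(n,q)$ by adjoining a \emph{fixed} set $\mathcal{T}$ of affine $k$-spaces lying outside $\tau_A$, where $\mathcal{T}$ does not depend on the chosen spread $\mathcal{S}_\tau$. Granting such a $\mathcal{T}$, I would argue as follows. For every $k$-spread $\mathcal{S}_\tau$ of $\tau_A$ the union $\mathcal{S}_\tau \cup \mathcal{T}$ is a $k$-spread of $\AG(n,q)$, so property (2), applied to $\mathcal{L}$ in $\AG(n,q)$, gives $|\mathcal{L}\cap(\mathcal{S}_\tau\cup\mathcal{T})| = x$. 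Since the elements of $\mathcal{T}$ are disjoint from $\tau_A$ while those of $\mathcal{S}_\tau$ lie inside $\tau_A$, the two intersections split, giving $|\mathcal{L}\cap\mathcal{S}_\tau| + |\mathcal{L}\cap\mathcal{T}| = x$. As $\mathcal{T}$ is fixed, $c := |\mathcal{L}\cap\mathcal{T}|$ is a constant independent of $\mathcal{S}_\tau$, and hence $|\mathcal{L}\cap\mathcal{S}_\tau| = x - c$ is constant. Because $\mathcal{S}_\tau \subseteq [\tau_A]_k$, this intersection equals $|(\mathcal{L}\cap[\tau_A]_k)\cap\mathcal{S}_\tau|$, which is exactly the assertion that property (2) holds for $\mathcal{L}\cap[\tau_A]_k$ inside $\tau_A$.

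The construction of $\mathcal{T}$ is the heart of the argument, and I would build it from the parallel translates of $\tau_A$. The $q^{n-i}$ cosets of the direction space of $\tau_A$ partition $\AG(n,q)$ into translates of $\tau_A$, each an affine $i$-space isomorphic to $\AG(i,q)$, and exactly one of them is $\tau_A$ itself. For each of the remaining $q^{n-i}-1$ translates $\tau'$, the hypothesis $i \geq k+1$ guarantees, via Lemma \ref{SpreadsAG(n,q)}, that $\tau' \cong \AG(i,q)$ admits a $k$-spread; I fix one such spread in each $\tau'$ and let $\mathcal{T}$ be the union of all of them. Every element of $\mathcal{T}$ is an affine $k$-space contained in a translate disjoint from $\tau_A$, hence disjoint from $\tau_A$, and $\mathcal{T}$ partitions the point set $\AG(n,q)\setminus\tau_A$. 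Consequently $\mathcal{S}_\tau\cup\mathcal{T}$ partitions all of $\AG(n,q)$ into pairwise disjoint affine $k$-spaces, i.e.\ it is a $k$-spread, which is the only property of $\mathcal{T}$ used above.

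The main obstacle is precisely producing one exterior partition $\mathcal{T}$ that completes every spread of $\tau_A$ simultaneously: if $\mathcal{T}$ were allowed to vary with $\mathcal{S}_\tau$, the constant $c$ would no longer cancel and the argument would collapse. The translate construction removes this obstacle cleanly, since the partition of the complement is chosen once and never touches $\tau_A$. The dimension conditions enter only to make the pieces exist and to keep us inside the standing framework: $i \geq k+1$ is exactly what is needed for $k$-spreads of $\tau_A$ and of its translates to exist, while the bound $i \geq 3$ (relevant only when $k=1$) keeps $\tau_A$ a genuine affine space of the type considered throughout and matches the setting in which the statement is subsequently applied. I would close by remarking that the resulting constant $x-c$ is the natural parameter of $\mathcal{L}\cap[\tau_A]_k$ inside $\tau_A$, although only its constancy is asserted here.
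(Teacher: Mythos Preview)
Your proof is correct and follows essentially the same strategy as the paper's: complete an arbitrary $k$-spread of $\tau_A$ to a $k$-spread of $\AG(n,q)$ by adjoining a \emph{fixed} exterior partition, apply property (2) in $\AG(n,q)$, and subtract the constant contribution of that partition. The only difference is cosmetic: you build the exterior partition by choosing a $k$-spread in each parallel translate of $\tau_A$, whereas the paper fixes a single $(k-1)$-space $I\subseteq\tau\cap\pi_\infty$ and takes all affine $k$-spaces through $I$ lying outside $\tau_A$ --- which is exactly your construction with the Type~II spread through $I$ selected uniformly in every translate.
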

\begin{proof}
Consider the projective closure PG($n,q$) of AG($n,q$) and let $\pi_\infty$ be the hyperplane at infinity.  Let $\tau_A$ be an $i$-dimensional space in $\AG(n,q)$ and let $\tau$ be its projective closure, hence $\dim(\tau\cap\pi_\infty)\geq k$. 

 Our goal is to prove that $\mathcal{L}$ restricted to $\tau_{A}$ satisfies property (2) with $x$. Pick a $(k-1)$-space $I$ in $\tau\cap \pi_\infty$ and denote $E$ as the set of affine $k$-spaces through $I$ and not in $\tau_A$. Then it is clear that every $k$-space $K\in E$ is affinely disjoint towards every $k$-space in $\tau_{A}$. It is also true that no two $k$-spaces in $E$ share an affine point, and yet as a set they cover all affine points not in $\tau_A$. So if we would choose a $k$-spread $\mathcal{S}$ in $\tau_{A}$, then we can always extend this $k$-spread to a $k$-spread in AG($n,q$) in the following way
$$\mathcal{S}':=\mathcal{S}\cup E.$$
Note that for every $k$-spread $\mathcal{S}$ in $\tau_A$, we can use the same $E$. So we have that 
$$x=|\mathcal{L}\cap\mathcal{S}'|=|\mathcal{L}\cap \mathcal{S}|+|\mathcal{L}\cap E|.$$
Hence, we have for every $k$-spread $\mathcal{S}$ in $\tau_A$ that $|(\mathcal{L}\cap[{\tau_{A}}]_k)\cap \mathcal{S}|=x-|\mathcal{L}\cap E|$. But this last term is a constant, due to the fact that $E$ was fixed for every $k$-spread $\mathcal{S}$. This proves the theorem.
\end{proof}

 We now can prove the main theorem.
\begin{proof}[Proof of Theorem \ref{EquivalencesGeneral}]
We first prove the equivalence between the first 3 statements and then we prove the equivalence with statement 4.
\begin{itemize}
\item {\bf From (1) to (3):} Suppose that $\cL$ is a Cameron-Liebler $k$-set  with characteristic vector $\chi_\cL$. Let $\mathcal{R}$ and $\mathcal{R}'$ be a pair of conjugated switching sets with characteristic vectors $\chi_\mathcal{R}$ and $\chi_{\mathcal{R}'}$ respectively. Then it holds, due to the definition of a pair of conjugated switching sets, that $$\chi_{\mathcal{R}}-\chi_{\mathcal{R}'}\in \ker(A_n),$$
where $A_n$ is the point-line incidence matrix of $\AG(n,q)$. Since $\chi_\cL\in \text{Im}(A_n^T)=(\ker(A_n))^\perp$, we have that $$\chi_\cL\cdot (\chi_{\mathcal{R}}-\chi_{\mathcal{R}'})=0.$$
This concludes the statement.

\item  {\bf From (3) to (2):} Since for every pair of $k$-spreads $\mathcal{S}_1$ and $\mathcal{S}_2$, it holds that $\mathcal{S}_1\setminus \mathcal{S}_2$ and $\mathcal{S}_2\setminus \mathcal{S}_1$ are a pair of conjugated switching sets, we know that $|\mathcal{L}\cap \mathcal{S}_1|=c=|\mathcal{L}\cap\mathcal{S}_2|$. So we only need to show that $c=x$. To obtain this we double count the pairs $(K, \mathcal{S})$, with $K\in \mathcal{S}\cap \cL$ and $\mathcal{S}$ a $k$-spread of type II. Hence, due to the fact that $|\cL|=x\gauss{n}{k}$ and the number of $k$-space of type II through a fixed $k$-spread equals $1$, we get that
$$x\gauss{n}{k}\cdot 1= \gauss{n}{k}\cdot c.$$
Thus $x=c$, which completes the assertion.

\item {\bf From (2) to (1):} If $n=2k+1$, then we know, due to Lemma \ref{SpreadsAG(n,q)} (1) and Theorem \ref{EquivalenceProj}, that $\cL$ is a Cameron-Liebler $k$-set of parameter $x$ in $\PG(n,q)$. Hence due to Theorem \ref{H6ProjToAff}, the assertion follows.\\
Suppose now that $n>2k+1$. 
Here we will use similar techniques as in the proof of Theorem 2.9 in \cite{DBD2018}, (more specifically in the step (7) to (1)). Let $\tau$ be an arbitrary $(2k+1)$-dimensional subspace, then we can consider $\cL\cap [\tau]_k$ in the affine space $\tau$. Due to Theorem \ref{SubspaceProp2}, we obtain that $\cL\cap [\tau]_k$ satisfies Property (2) in $\tau$. Hence, using the previous observation, we obtain that $\cL\cap [\tau]_k$ is a Cameron-Liebler $k$-set of a certain parameter in $\tau$. Note that this space $\tau$ was chosen arbitrarily. Thus it follows for every $(2k+1)$-dimensional subspace of $\AG(n,q)$ that, for the characteristic vector of $\cL\cap [\tau]_k$, it holds that 
$$\chi_{\cL\cap [\tau]_k}\in \text{Im}(A_\tau^T),$$
with $A_\tau$ the incidence matrix of $\tau$. So we have that $\chi_{\cL\cap [\tau]_k}$ is a linear combination of the rows of $A_\tau$. Note that due to the fact that $A_\tau$ has full row rank, it holds that this linear combination is unique. We only need to show that $\chi_\cL$ is uniquely defined by the vectors $\chi_{\cL\cap [\tau]_k}$, with $\tau$ varying over all $(2k+1)$-spaces in $\AG(n,q)$. \\
We first want to show that for every two $(2k+1)$-spaces $\tau$ and $\tau'$ the coefficients of the row corresponding to a point in $\tau\cap \tau'$ in the linear combination of $\chi_{\cL\cap [\tau]_k}$ and $\chi_{\cL\cap [\tau']_k}$ are equal.\\
Consider the subspace $\tau\cap\tau'$, and consider the corresponding columns of $A_n$. Then using the fact that $A_{\tau\cap \tau'}$ also has full row rank, we conclude that the linear combination of the rows that give $\chi_{\cL\cap [\tau\cap \tau']_k}$ is unique. Note that this unique linear combination has the same coefficients for the rows corresponding
with points in $\tau\cap \tau'$ as $\chi_{\cL\cap [\tau]_k}$ and $\chi_{\cL\cap [\tau']_k}$ has respectively. Here we also used the fact that an entry of $A_n$ corresponding with a point of $\tau \setminus \tau'$ or $\tau'\setminus\tau$ and a $k$-space in $\tau\cap \tau'$ is zero. Thus we may conclude that the common rows in $\chi_{\cL\cap [\tau]_k}$ and $\chi_{\cL\cap [\tau']_k}$ have the same coefficient.

Using all of these $(2k+1)$-spaces, we have that $\chi_\cL$ is uniquely defined and $\chi_\cL\in \text{Im}(A_n^T)$. This proves the assertion.

\item {\bf Equivalence between (4) and the rest, when $k=1$:}
First if \(\mathcal{L}\) is an affine Cameron-Liebler line class with parameter $x$, then, by Theorem \ref{CLkSpaceBasic}, we get that \(\mathcal{L}\) is a Cameron-Liebler line class in $\PG(n,q)$. Here we know that for every (affine) line $\ell$, there are exactly
	$$ q^2 \begin{bmatrix}
	n-2\\
	1
	\end{bmatrix}_q (x-\chi_{\mathcal{L}}(\ell))$$
	lines of \(\mathcal{L}\) projectively disjoint to $\ell$. So we only still need to consider the lines of $\mathcal{L}$ through the point $\ell\cap \pi_\infty$. But since this is a point at infinity, which gives a line spread of type II, we have a total of $x$ lines of $\mathcal{L}$ through this point. Thus if we add those \(x-\chi_{\mathcal{L}}(\ell)\) lines of \(\mathcal{L}\) not equal to $\ell$, we get a total of
	$$ \left( q^2 \begin{bmatrix}
	n-2\\
	1
	\end{bmatrix}_q +1 \right) (x-\chi_{\mathcal{L}}(\ell))$$
	lines of \(\mathcal{L}\) disjoint to $\ell$ in AG($n,q$).
	
	 Conversely, suppose that Property (2) holds, then we look at the corresponding projective space $\PG(n,q)$.  We can see that of the 
	$$ \left( q^2 \begin{bmatrix}
	n-2\\
	1
	\end{bmatrix}_q +1 \right) (x-\chi_{\mathcal{L}}(\ell))$$
	lines of \(\mathcal{L}\) that are disjoint in $\AG(n,q)$ to an affine line $\ell$, there are
	$$ q^2 \begin{bmatrix}
	n-2\\
	1
	\end{bmatrix}_q (x-\chi_{\mathcal{L}}(\ell))$$
	elements of \(\mathcal{L}\) projectively disjoint to $\ell$. 

If we now pick a line $\ell$ in \(\pi_\infty\), then there are
	 $$\frac{q^n-1}{q-1}-(q+1)= q^2 \begin{bmatrix}
	 n-2\\
	 1
	 \end{bmatrix}_q$$
	 points in \(\pi_\infty\) not in $\ell$. Through every such point, there are exactly $x$ lines of \(\mathcal{L}\) that are disjoint to $\ell$. If we combine these results we obtain that, by Theorem \ref{EquivalenceProj}, it follows that \(\mathcal{L}\) is a  Cameron-Liebler line class in $\PG(n,q)$ with parameter $x$. Using  Theorem \ref{ProgToAff}, we see that \(\mathcal{L}\) is also a Cameron-Liebler line class in $\AG(n,q)$ with the same parameter $x$.	


\end{itemize}

\end{proof}
\begin{Opm}
There is also another way to prove the equivalence of (1) and (2) for  $k=1$. For this, we will use association schemes. This will be done in Section 5.
\end{Opm}

\begin{Le}\label{CongSetsUp}
Suppose that $\mathcal{R}$ and $\mathcal{R}'$ are a pair of conjugated switching $k$-sets in $\PG(n,q)$. If we define $\mathcal{R}_A$ (and $\mathcal{R}'_A$) as the set of affine $k$-spaces of $\mathcal{R}$ (and $\mathcal{R}'$ respectively), then $\mathcal{R}_A$ and $\mathcal{R}_A'$ are conjugated switching $k$-sets in $\AG(n,q)$.
\end{Le}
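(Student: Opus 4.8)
The plan is to verify directly the three defining conditions of a pair of conjugated switching $k$-sets in $\AG(n,q)$ for $\mathcal{R}_A$ and $\mathcal{R}'_A$: that each is a partial $k$-spread, that they are disjoint as sets of $k$-spaces, and that they cover the same affine point set. The guiding observation throughout is that a $k$-space contained in $\pi_\infty$ contains no affine point, so passing from $\mathcal{R}$ to $\mathcal{R}_A$ discards exactly the $k$-spaces that are irrelevant for covering affine points, while passing to affine restrictions can only improve disjointness.

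First I would check the partial $k$-spread property. Since $\mathcal{R}_A \subseteq \mathcal{R}$ and $\mathcal{R}$ consists of pairwise projectively disjoint $k$-spaces, any two $k$-spaces of $\mathcal{R}_A$ already meet in the empty set in $\PG(n,q)$, hence are a fortiori affinely disjoint. Thus $\mathcal{R}_A$, and likewise $\mathcal{R}'_A$, is a partial $k$-spread of $\AG(n,q)$. The disjointness condition $\mathcal{R}_A \cap \mathcal{R}'_A = \emptyset$ is then immediate, because $\mathcal{R}_A \subseteq \mathcal{R}$, $\mathcal{R}'_A \subseteq \mathcal{R}'$, and $\mathcal{R} \cap \mathcal{R}' = \emptyset$ by hypothesis.

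The only step requiring an argument is that $\mathcal{R}_A$ and $\mathcal{R}'_A$ cover the same set of affine points. Here I would show that the affine points covered by $\mathcal{R}_A$ coincide with the affine points covered by $\mathcal{R}$: if an affine point $P$ lies on some $K \in \mathcal{R}$, then $K$ contains an affine point and so is not contained in $\pi_\infty$, whence $K \in \mathcal{R}_A$; the reverse inclusion is trivial since $\mathcal{R}_A \subseteq \mathcal{R}$. The same identity holds for $\mathcal{R}'$ and $\mathcal{R}'_A$. Since $\mathcal{R}$ and $\mathcal{R}'$ cover the same projective point set by hypothesis, they in particular cover the same affine points, and combining the two identities yields that $\mathcal{R}_A$ and $\mathcal{R}'_A$ cover exactly the same affine points. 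With all three conditions verified, the conclusion follows.

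I do not expect a serious obstacle in this argument; the entire content lies in the elementary remark that $k$-spaces at infinity neither contribute affine points nor destroy affine disjointness. The one implication to state carefully is that \emph{a $k$-space incident with an affine point cannot lie in $\pi_\infty$}, since this is precisely what guarantees that the affine cover is unchanged when restricting from $\mathcal{R}$ to $\mathcal{R}_A$.
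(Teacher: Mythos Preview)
Your proof is correct and follows essentially the same approach as the paper: both verify disjointness and the partial $k$-spread property by inclusion, and then argue that the affine points covered by $\mathcal{R}_A$ and $\mathcal{R}'_A$ coincide because a $k$-space through an affine point cannot lie in $\pi_\infty$. Your write-up is slightly more explicit in isolating the equality of affine covers of $\mathcal{R}$ and $\mathcal{R}_A$, but the underlying argument is the same.
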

\begin{proof}
	 Since $ \mathcal{R} \cap \mathcal{R}'=\emptyset$, it is clear that
	$$\mathcal{R}_A\cap \mathcal{R}_A' = \emptyset.$$
	Since \(\mathcal{R}_A\)  and \(\mathcal{R}_A'\) arose from $\mathcal{R}$ and $\mathcal{R'}$, we know that no two $k$-spaces in the same set intersect. Thus both are still partial $k$-spreads. So we only need to show that they still cover the same set of points. If an affine point $p$ is covered by \(\mathcal{R}_A \), then this point (which is also a projective point) is also covered by $\mathcal{R}$ and, hence, by \(\mathcal{R'}\). Since this point was affine, the corresponding $k$-space of $\mathcal{R'}$ is contained in $\mathcal{R}'_A$ and, hence, the point is covered by $\mathcal{R}'_A$.
\end{proof}

This lemma also shows that there exist conjugated switching sets in $\AG(n,q)$, since they exist in $\PG(n,q)$. This fact implies that Theorem \ref{EquivalencesGeneral} does not have a trivial assumption.

\section{The association scheme of affine lines}\label{sec:assoc}
Our goal in this section is that we want to investigate the association scheme of lines in $\AG(n,q)$. We start with repeating some definitions of association schemes. If the reader is not familiar with association schemes, we refer to \cite{DistanceRegGraph,Godsil}.
 \begin{Def}{\cite[Section 2.1]{DistanceRegGraph}}
	Let $X$ be a finite set. A $d$\emph{-class   association scheme} is a pair $(X, \mathcal{R})$, where $\mathcal{R}=\{\mathcal{R}_0,\mathcal{R}_1,..., \mathcal{R}_d\}$ is a set of binary symmetrical relations with the following properties:
	\begin{enumerate}
		\item $\{\mathcal{R}_0,\mathcal{R}_1,..., \mathcal{R}_d\}$ is a partition of $X\times X$.
		\item \(\mathcal{R}_0\) is the identity relation.
		\item There exist constants \(p_{ij}^l\) such that for \(x,y\in X\), with \((x,y)\in \mathcal{R}_l\), there are exactly \(p_{ij}^l\) elements \(z\) with \((x,z)\in \mathcal{R}_i\) and \((z,y)\in \mathcal{R}_j\). These constants are called the \emph{intersection numbers} of the association scheme.
	\end{enumerate}
\end{Def}
In such a $d$-class association scheme we can define adjacency matrices as follows.
 \begin{Def}
Consider a $d$-class association scheme $(X,\mathcal{R})$, where $\mathcal{R}=\{\mathcal{R}_0,\mathcal{R}_1,..., \mathcal{R}_d\}$ and $X=\{x_1,...,x_n\}$. Then we can define $d+1$ matrices $B_0,...,B_d$ of dimension $n \times n$, such that
$$(B_k)_{ij}= \left\{ \begin{aligned} 1, & \text{ if } (x_i,x_j)\in \mathcal{R}_k \\ 0, & \text{ if } (x_i,x_j)\not\in \mathcal{R}_k. \end{aligned} \right.$$
These matrices are called the \emph{adjacency matrices} of the association scheme.
\end{Def}
An important property of these adjacency matrices is that they can be diagonalized simultaneously, so we obtain maximal common (right) eigenspaces $V_0,...,V_d$.
 It is also known that these adjacency matrices span a $(d+1)$-dimensional commutative $\mathbb{C}$-algebra $\mathcal{A}$. This algebra is called the \emph{Bose-Mesner algebra}, which has a basis of idempotents $\{E_i \mid 0 \leq i \leq d\}$. One can prove that every matrix $E_i$ is the orthogonal projection to the eigenspace $V_i$. If we would consider the common eigenspaces, we can denote all the eigenvalues in a matrix. This matrix is called the eigenvalue matrix.

\begin{Def}
Consider a $d$-class association scheme  $(X,\mathcal{R})$, where $\mathcal{R}=\{\mathcal{R}_0,\mathcal{R}_1,..., \mathcal{R}_d\}$ and $X=\{x_1,...,x_n\}$. Let $B_0,...,B_d$ be the adjacency matrices and $\{E_i \mid 0\leq i\leq d\}$ be the idempotent basis of the Bose-Mesner algebra. Then the \emph{eigenvalue matrix} $P=[P_{ij}]$ and the \emph{dual eigenvalue matrix} $Q=[Q_{ij}]$ are the matrices for which it holds that
$$ B_j= \sum_{i=0}^d P_{ij}E_i \text{   and  } E_j= \frac{1}{n} \sum_{i=0}^d Q_{ij}B_i.$$
Here $0 \leq i,j \leq d$.
\end{Def}
Since every $E_i$ in the idempotent basis gives an orthogonal projection onto $V_i$, it is indeed true that the values $P_{ij}$ are the eigenvalues. Another important fact is that $PQ= n I_{d+1}=QP$. 

We now give a well-known example of such an association scheme.

\begin{Ex}\cite[Example 1.1.2]{Godsil}
Consider the set of lines in $\PG(n,q)$, with $n\geq 3$. Then this is a finite set, which we will call $\Pi_1$. Consider now the following set of relations $\mathcal{R}'=\{\mathcal{R}_0', \mathcal{R}_1',\mathcal{R}_2'\}$. Then for $\ell$ and $\ell'$ in $\Pi_1$, we have that
\begin{itemize}
\item  \((\ell,\ell')\in \mathcal{R}_0'\) if \(\ell=\ell'\).
	\item \((\ell,\ell')\in\mathcal{R}_1'\) if they meet in a point.
	\item \((\ell,\ell')\in\mathcal{R}_2'\) when they do not meet at all.
\end{itemize}
It is well-known that $\Delta'=(\Pi_1, \mathcal{R}')$ gives an association scheme. This concept can be generalized to $k$-spaces in $\PG(n,q)$.
\end{Ex}
We try to define a similar association scheme for lines in $\AG(n,q)$. Note that due to the fact that there exists a concept of infinity in $\AG(n,q)$, this will lead to an increase of relations. Here we see that relation $\mathcal{R}_1'$ will split into two separate relations.
\begin{Cons}\label{AssoScheme}
Consider the set $\Phi_1$ of lines of $\AG(n,q)$, with $n\geq 3$. Then we can define a \emph{3-class association scheme} 
$\Delta=(\Phi_1, \mathcal{R})$, where we denote the following relations \(\mathcal{R}=\{\mathcal{R}_0,\mathcal{R}_1,\mathcal{R}_2,\mathcal{R}_3\}\) as follows. Pick \(\ell,\ell' \in \Phi_1\), then
\begin{itemize}
	\item  \((\ell,\ell')\in \mathcal{R}_0\) if \(\ell=\ell'\).
	\item \((\ell,\ell')\in\mathcal{R}_1\) if they meet in an affine point.
	\item \((\ell,\ell')\in\mathcal{R}_2\) if they meet in a point at infinity.
	\item \((\ell,\ell')\in\mathcal{R}_3\) when they do not meet in the corresponding projective space.
\end{itemize}
In order to prove that this is an association scheme, we can refer to \cite[Chapter 4]{JonathanThesis}, where the intersection numbers were explicitly calculated. Another way to view this, is as a semilattice and conclude, due to \cite{Delsarte1976}, that $\Delta$ is indeed an association scheme.
\end{Cons}
Let us consider $\Delta$. If we number the lines of $\AG(n,q)$ in a fixed order $$\left\lbrace \ell_i \mid i \in \left\lbrace 0,..., \frac{q^{n-1}(q^n-1)}{(q-1)}-1\right\rbrace \right\rbrace, $$
then we can define the adjacency matrices as \(B_0,B_1,B_2\) and \(B_3\). We know that these are $\frac{q^{n-1}(q^n-1)}{(q-1)} \times \frac{q^{n-1}(q^n-1)}{(q-1)}$ matrices  over \(\mathbb{C}\) that have common (right) eigenspaces.
If we define these common (right) eigenspaces  by  \(V_0,V_1,V_2\) and \(V_3\), then we know that \(\mathbb{C}^{\Phi_1}= V_0 \perp V_1 \perp V_2 \perp V_3 \). Consider now the Bose-Mesner algebra $\mathcal{A}$ of the association scheme $\Delta$, which will be a $4$-dimensional $\mathbb{C}$-algebra. Then we know that $\mathcal{A}$ has  a basis of idempotents $\{E_i \mid 0\leq i\leq 3\}$, such that every $E_i$ is the orthogonal projection onto $V_i$.

\subsection{Calculating the eigenvalue matrix and dual eigenvalue matrix of $\Delta$}
In order to find the eigenvalue matrix $P$ and the dual eigenvalue matrix $Q$, we need to define some other matrices known as the intersection matrices.
\begin{Def}
	Consider a $d$-class association scheme with intersection numbers $p_{ij}^k$. Then we can define the following $(d+1)\times (d+1)$ matrices for \(i \in \{0,...,d\}\)
	$$\mathcal{P}_i=[p_{ij}^k]_{k,j},$$
	hence the $(k,j)$-entry is $\mathcal{P}_i(k,j)=p_{ij}^k$. These matrices are known as \emph{intersection matrices}. 
\end{Def}
These intersection matrices for the association scheme of Construction \ref{AssoScheme} can be calculated:
\[ \mathcal{P}_0 = \begin{pmatrix}
	1 & 0 & 0 & 0 \\
	0 & 1 & 0 & 0\\
	0 & 0 & 1 & 0 \\
	0 & 0 & 0 & 1
	\end{pmatrix},\]
\[\mathcal{P}_1= \begin{pmatrix}
	0 & q\left( \frac{q^n-1}{q-1}-1\right)  & 0 & 0 \\
	
	1 & (q-1)^2+\left( \frac{q^n-1}{q-1}-2 \right) & q-1 & (q-1)\left( \frac{q^n-1}{q-1}-1-q \right) \\
	
	0 & q^2 & 0 & q\left( \frac{q^n-1}{q-1}-1-q \right)\\
	
	0 & q^2 & q & q\left( \frac{q^n-1}{q-1}-1-(q+1) \right)\\
	\end{pmatrix},\]
\[\mathcal{P}_2= \begin{pmatrix}
	0 & 0 & q^{n-1}-1 & 0 \\
	0 & q-1 & 0 & q^{n-1}-q\\
	1 & 0 & q^{n-1}-2 & 0 \\
	0 & q & 0 & q^{n-1}-1-q\\
	\end{pmatrix}\]
and
\[\mathcal{P}_3=\left(\begin{array}{rrrr}
	0 & 0 & 0 & \frac{q^{2} - {\left(q + 1\right)} q^{n} +
		q^{2  n-1}}{q - 1} \\
	0 & -q^{2} + q^{n} & -q + q^{n-1} & \frac{q^{3}
		+ q^{2} - {\left(2 \, q^{2} + q - 1\right)} q^{n-1} - q + q^{2 
			n-1}}{q - 1} \\
	0 & -\frac{q^{3} - q^{n + 1}}{q - 1} & 0 & \frac{q^{3} +
		q^{2} - {\left(2 \, q + 1\right)} q^{n} + q^{2 n-1}}{q - 1} \\

	1 & -\frac{q^{3} + q^{2} - q - q^{n + 1}}{q - 1} & -q
		- 1 + q^{n-1} & \frac{q^{3} + 3 \, q^{2} - {\left(2 \, q^{2} + 2
			\, q - 1\right)} q^{n-1} - 2 \, q+ q^{2 n-1}}{q - 1}
	\end{array}\right).\]
For these calculations we refer to \cite[Chapter 4]{JonathanThesis}.
\begin{Le}\cite[page 45, Lemma 2.2.1]{DistanceRegGraph}\label{ConstrOfP}
	Consider a $d$-class association scheme together with the eigenvalue matrix $P$ and the intersection matrices $\mathcal{P}_i$, for $i \in \{0,...,d\}$. Then
	$$P \cdot \mathcal{P}_i \cdot P^{-1}= \begin{pmatrix}
	P_{0i} & 0 & 0 & ... & 0 \\
	0 & P_{1i} & 0 & ... & 0 \\
	\vdots & \vdots & \vdots & \ddots & \vdots \\
	0 & 0&0 &... & P_{di}
	\end{pmatrix}.$$
	Consequently, $\mathcal{P}_i$ and the adjacency matrix $B_i$ have the same eigenvalues.
\end{Le}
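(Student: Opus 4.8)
The plan is to realize each intersection matrix $\mathcal{P}_i$ as the matrix of left-multiplication by $B_i$ on the Bose-Mesner algebra $\mathcal{A}$, expressed in the basis $\{B_0,\dots,B_d\}$, and then to diagonalize this regular representation using the algebra homomorphisms supplied by the common eigenspaces $V_0,\dots,V_d$.

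First I would record the structure constants in matrix form. Directly from the definition of the intersection numbers one has $B_i B_j = \sum_{k=0}^d p_{ij}^k B_k$. Reading this as the action of $B_i$ on the basis vector $B_j$, the coefficient of $B_k$ is exactly $p_{ij}^k = (\mathcal{P}_i)_{kj}$, so $\mathcal{P}_i$ is precisely the matrix of the linear map $X \mapsto B_i X$ on $\mathcal{A}$ with respect to the basis $\{B_j\}$. This is the observation that turns a statement about the large matrices $B_i$ into one about the $(d+1)\times(d+1)$ matrices $\mathcal{P}_i$.

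Next I would use that each common eigenspace $V_m$ yields an algebra homomorphism $\phi_m : \mathcal{A} \to \mathbb{C}$. Since $E_m$ is the orthogonal projection onto $V_m$ and $B_j = \sum_i P_{ij} E_i$, every $B_\ell$ acts on $V_m$ as the scalar $P_{m\ell}$; hence $\phi_m(B_\ell) := P_{m\ell}$ is multiplicative. Applying $\phi_m$ to the product relation above gives the key identity
$$\sum_{k=0}^d P_{mk}\, p_{ij}^k = P_{mi}\, P_{mj}.$$
The left-hand side is precisely the $(m,j)$-entry of $P\mathcal{P}_i$, so this identity says that the $m$-th row of $P$ is a left eigenvector of $\mathcal{P}_i$ with eigenvalue $P_{mi}$; collecting all rows, $P\mathcal{P}_i = D_i P$ with $D_i = \mathrm{diag}(P_{0i},\dots,P_{di})$.

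Finally, since $PQ = nI_{d+1}$ the matrix $P$ is invertible, so I may rewrite this as $P\mathcal{P}_i P^{-1} = D_i$, which is the claimed diagonalization; the eigenvalues of $\mathcal{P}_i$ are therefore $P_{0i},\dots,P_{di}$, exactly the eigenvalues of $B_i$ (which occur on $B_i$ with multiplicities $\dim V_m$). I do not expect a serious obstacle: the only points needing care are verifying that $\phi_m$ is genuinely multiplicative — which rests entirely on $E_m$ projecting onto a common eigenspace where every $B_\ell$ acts as a scalar — and invoking $PQ = nI_{d+1}$ so that the conjugation by $P$ is legitimate even when some diagonal entries $P_{mi}$ coincide.
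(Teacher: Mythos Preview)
Your argument is correct and is essentially the standard proof: realize $\mathcal{P}_i$ as the regular representation of $B_i$ on $\mathcal{A}$ in the basis $\{B_j\}$, apply the characters $\phi_m(B_\ell)=P_{m\ell}$ coming from the common eigenspaces to obtain $\sum_k P_{mk}p_{ij}^k = P_{mi}P_{mj}$, and read this as $P\mathcal{P}_i = D_iP$.

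Note, however, that the paper does not give its own proof of this lemma; it is quoted verbatim from \cite[page~45, Lemma~2.2.1]{DistanceRegGraph} and used as a black box to justify that the eigenvalue matrix $P$ can be computed by simultaneously diagonalizing the small intersection matrices $\mathcal{P}_i$. Your write-up is exactly the argument one finds in that reference, so there is nothing to compare against within the present paper.
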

This lemma implies that the intersection matrices can be diagonalized simultaneously.
In order to find $P$, we use the following theorem.

\begin{St}{\cite[Proposition 2.2.2]{DistanceRegGraph}}
Consider a $d$-class association scheme and let $u_i$, for $i \in \{0,...,d\}$, be the set of common left normalized (column) eigenvectors of the intersection matrices. Here we mean with normalized, that $(u_i)_0=1$ for every $i$. Then  the rows of the eigenvalue matrix $P$ are the elements $(u_i)^T$.
\end{St}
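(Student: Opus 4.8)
The plan is to read the statement off from Lemma~\ref{ConstrOfP} in three steps: \emph{existence} (each row of $P$ is a common left eigenvector of the intersection matrices), \emph{normalization} (its leading entry equals $1$), and \emph{uniqueness} (there are no further normalized common left eigenvectors).

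First I would transcribe Lemma~\ref{ConstrOfP}. Writing $D_i := \operatorname{diag}(P_{0i}, \dots, P_{di})$, the identity $P\mathcal{P}_i P^{-1} = D_i$ becomes $P \mathcal{P}_i = D_i P$ after multiplying on the right by $P$. Denoting the $k$-th row of $P$ by the row vector $p_k$ and comparing the $k$-th rows of both sides of $P\mathcal{P}_i = D_i P$, I obtain
$$ p_k \mathcal{P}_i = P_{ki}\, p_k \qquad (0 \le i \le d). $$
Hence $p_k$ is a common left eigenvector of all intersection matrices $\mathcal{P}_0, \dots, \mathcal{P}_d$, the eigenvalue attached to $\mathcal{P}_i$ being exactly $P_{ki}$; equivalently $p_k^{T}$ is a common (column) eigenvector of the transposes $\mathcal{P}_i^{T}$, which is the sense of ``left eigenvector'' intended in the statement.

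Next I would verify the normalization. Since $\mathcal{R}_0$ is the identity relation, $B_0 = I$; expanding $B_0 = \sum_i P_{i0} E_i$ and comparing with $I = \sum_i E_i$ (the idempotents $E_i$ are the orthogonal projections onto the common eigenspaces, so they sum to the identity and are linearly independent) forces $P_{i0} = 1$ for every $i$. In particular the leading entry $(p_k)_0 = P_{k0}$ of each row equals $1$, which matches the normalization $(u_k)_0 = 1$.

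The step I expect to be the main obstacle is uniqueness: I must show that $\{p_0, \dots, p_d\}$ exhausts the normalized common left eigenvectors, so that the $u_i$ are forced to be these. For this I would use that $P$ is invertible (indeed $PQ = nI_{d+1}$), whence its rows $p_0, \dots, p_d$ are linearly independent, and the eigenvalue tuples $(P_{k0}, \dots, P_{kd})$ carried by the $p_k$ are pairwise distinct. Expanding an arbitrary common left eigenvector $v = \sum_k c_k p_k$ with eigenvalue tuple $(\mu_i)$ and matching coefficients via the linear independence of the $p_k$ gives $c_k(P_{ki} - \mu_i) = 0$ for all $i$, so at most one coefficient $c_k$ can be nonzero; thus $v$ is a scalar multiple of a single $p_k$. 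Normalizing its first coordinate to $1$ (possible since $(p_k)_0 = 1 \ne 0$) returns exactly $p_k$. Therefore the normalized common left eigenvectors are precisely $p_0, \dots, p_d$, i.e. $(u_i)^{T} = p_i$ is the $i$-th row of $P$, as claimed.
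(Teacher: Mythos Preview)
Your proof is correct. The paper does not supply its own proof of this statement---it is quoted verbatim from \cite[Proposition 2.2.2]{DistanceRegGraph} and used as a black box to read off the eigenvalue matrix from the intersection matrices---so there is no in-paper argument to compare against. Your derivation is the natural one given the surrounding material: Lemma~\ref{ConstrOfP} already asserts $P\mathcal{P}_iP^{-1}=\operatorname{diag}(P_{0i},\dots,P_{di})$, and you correctly unpack this as $p_k\mathcal{P}_i=P_{ki}p_k$ for each row $p_k$ of $P$, verify $P_{k0}=1$ via $B_0=I=\sum_iE_i$, and close the argument by observing that invertibility of $P$ forces the rows (which coincide with their own eigenvalue tuples) to be distinct, so any common left eigenvector is proportional to a single $p_k$. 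All three steps are sound.
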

This lemma together with the following left (normalized) eigenvectors of the intersection matrices above, will give us the eigenvalue matrix.

\begin{equation*}
	u_0= \left( 1 , -q + \frac{q^{n + 1} - 1}{q - 1} - 1 , q^{n - 1} - 1 , q
	+ q^{n} + \frac{q^{2 n - 1} - 1}{q - 1} - \frac{2 \, {\left(q^{n + 1}
			- 1\right)}}{q - 1} + 1\right)^T, 
\end{equation*}
\begin{equation*}
	u_1=\left( 1 ,-q + \frac{q^{n} - 1}{q - 1} - 1 , -1 , q - \frac{q^{n}
		- 1}{q - 1} + 1\right)^T, 
\end{equation*}
\begin{equation*}
 u_2=\left( 1,-q,-1,q\right)^T ,
\end{equation*}
\begin{equation*}
	u_3=\left(  1 , -q , q^{n - 1} - 1 , q - q^{n - 1}\right)^T. 
\end{equation*}

These left eigenvectors were calculated by using Sage \cite{sagemath}. From this together with the lemma above, we can obtain the eigenvalue matrix $P$ of the association scheme $\Delta$, see Construction \ref{AssoScheme}. So we conclude that
\begin{equation}\label{MatrixP}P=   
\left(\begin{array}{rrrr}
1 & -\frac{q^{2} - q^{n + 1}}{q - 1} & q^{n-1} - 1&
\frac{q^{2} - {\left(q + 1\right)} q^{n} + q^{2  n-1}}{q - 1} \\
1 & -\frac{q^{2} - q^{n}}{q - 1} & -1 & \frac{q^{2} -
	q^{n}}{q - 1} \\
1 & -q & -1 & q \\
1 & -q & q^{n-1} - 1 & q - q^{n-1} \\
\end{array}\right)\end{equation}
and due to \(PQ=q^{n-1}\left( \frac{q^n-1}{q-1}\right) I_4= |\Phi_1| I_4=QP\), we obtain that
\begin{equation}\label{MatrixQ}
Q=\left(\begin{array}{rrrr}
1 & q^{n} - 1 & -\frac{{\left(q^{2} + 1\right)} q^{n} - q^{2} - q^{2 \,
		n}}{q^{2} - q} & \frac{q^{n} - q}{q - 1}  \\
1 & \frac{{\left(q^{2} + 1\right)}
	q^{n} - q^{2} - q^{2 \, n}}{q^{2} - q^{n + 1}} & -\frac{{\left(q^{2} + 1\right)} q^{n} - q^{2} - q^{2 \,
		n}}{q^{2} - q^{n + 1}} & -1  \\
1 & \frac{q - q^{n + 1}}{q^{n} - q} & \frac{{\left(q^{2} + 1\right)} q^{n} - q^{2} - q^{2 \,
		n}}{{\left(q - 1\right)} q^{n} - q^{2} + q} & \frac{q^{n} - q}{q -
	1}  \\
1 & \frac{q - q^{n +
		1}}{q^{n} - q} &\frac{q^{n + 1} - q}{q^{n} - q} & -1 
\end{array}\right).\end{equation}

\section{Cameron-Liebler line classes in $\AG(n,q)$}
In this section we will give an alternative proof for the following statement, which is a special case of Theorem \ref{EquivalencesGeneral}.
\begin{St}\label{MAIN}
Suppose that $\mathcal{L}$ is a set of lines in $\AG(n,q)$, $n\geq 3$, such that for every line spread $\mathcal{S}$ it holds that 
$$|\mathcal{L}\cap \mathcal{S}|=x.$$
Then $\mathcal{L}$ is a Cameron-Liebler line class of parameter $x$.
\end{St}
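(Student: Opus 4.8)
The plan is to read the spread condition inside the association scheme $\Delta$ of Construction \ref{AssoScheme} and to use the eigenvalue matrix $P$ of \eqref{MatrixP} to locate $\chi_{\mathcal L}$ among the common eigenspaces $V_0,V_1,V_2,V_3$. The key starting identity is $A_n^{T}A_n = qB_0 + B_1$: the $(\ell,\ell')$-entry of $A_n^{T}A_n$ counts the affine points on both lines, which is $q$ if $\ell=\ell'$, is $1$ if $\ell,\ell'$ meet in an affine point (relation $\mathcal R_1$), and is $0$ otherwise. Since $\mathrm{Im}(A_n^{T})=\mathrm{Im}(A_n^{T}A_n)$, the space $\mathrm{Im}(A_n^{T})$ is the sum of those $V_i$ on which $qB_0+B_1$ has nonzero eigenvalue, namely $q+P_{i1}$ (using that column $0$ of $P$ is all ones). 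This eigenvalue is positive for $i\in\{0,1\}$ and equals $q-q=0$ for $i\in\{2,3\}$, so $\mathrm{Im}(A_n^{T})=V_0\perp V_1$ and $\ker(A_n)=V_2\perp V_3$. Thus $\mathcal L$ is a Cameron-Liebler line class exactly when $\chi_{\mathcal L}\perp\ker(A_n)$. Now the hypothesis gives, for any two line spreads $\mathcal S,\mathcal S'$, that $\chi_{\mathcal L}\cdot(\chi_{\mathcal S}-\chi_{\mathcal S'})=|\mathcal L\cap\mathcal S|-|\mathcal L\cap\mathcal S'|=0$, while $\chi_{\mathcal S}-\chi_{\mathcal S'}\in\ker(A_n)$ because a spread covers each affine point once. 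Hence $\chi_{\mathcal L}$ is orthogonal to the span $W$ of all spread differences, and $W\subseteq\ker(A_n)$; so it suffices to prove $W=\ker(A_n)$, equivalently that every $z\in\ker(A_n)=V_2\perp V_3$ with $z\perp W$ is zero.

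The type II spreads of Lemma \ref{SpreadsAG(n,q)} handle the $V_3$-part. Their characteristic vectors are precisely the indicators $v_u$ of the parallel classes of affine lines through the points $u$ at infinity, and $B_0+B_2$ is block-diagonal (one all-ones block per parallel class), so $\mathrm{Im}(B_0+B_2)=\operatorname{span}\{v_u\}$; by columns $0$ and $2$ of $P$ this image is exactly $V_0\perp V_3$. Given $z\in V_2\perp V_3$ with $z\perp(v_u-v_{u'})$ for all $u,u'$, the numbers $z\cdot v_u$ are all equal, and since $\sum_u v_u=\mathbf 1\in V_0$ is orthogonal to $z$ their common value is $0$; as $\{v_u\}$ spans $V_0\perp V_3\supseteq V_3\ni E_3 z$ this forces $E_3 z=0$, i.e. $z\in V_2$.

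It remains to show that such a $z\in V_2$ vanishes, and this is where I expect the real difficulty. A type III spread of Lemma \ref{SpreadsAG(n,q)} chooses, in each of the $q$ affine hyperplanes $\Sigma$ through a fixed $(n-2)$-space in $\pi_\infty$, a parallel class inside $\Sigma$; changing the chosen direction from $u$ to $u'$ in a single $\Sigma$ produces the spread difference $d_{\Sigma,u,u'}=C^{\Sigma}_u-C^{\Sigma}_{u'}$, where $C^{\Sigma}_u$ is the indicator of the affine lines of direction $u$ contained in $\Sigma$. One checks that $d_{\Sigma,u,u'}\in\ker(A_n)$ but is not constant on the global parallel classes, so it has nonzero $V_2$-component; orthogonality $z\perp d_{\Sigma,u,u'}$ says that $z\cdot C^{\Sigma}_u$ is constant over the directions $u\in\sigma_\infty$, for every affine hyperplane $\Sigma$. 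The main obstacle is to deduce $z=0$ from this family of conditions, i.e. to show that the type III differences, together with $V_3$, already exhaust $V_2$. The cleanest finish would be module-theoretic: $\AGL(n,q)$ preserves $\Delta$, hence commutes with each $E_i$ and permutes the type III differences, so $E_2 W$ is a nonzero $\AGL(n,q)$-submodule of $V_2$, and if $V_2$ is irreducible then $E_2 W=V_2$, whence (using $V_3\subseteq W$) $V_2\subseteq W$ and $W=\ker(A_n)$. Since irreducibility of $V_2$ under $\AGL(n,q)$ is not obviously available—the affine scheme splits the projective eigenspaces—I would as a fallback prove the spanning directly, by a counting argument over all affine hyperplanes $\Sigma$ (exploiting, via the step-two computation applied inside each $\Sigma\cong\AG(n-1,q)$, that $z\cdot C^{\Sigma}_u$ is constant), and controlling the resulting $V_2$-components with the dual eigenvalue matrix $Q$ of \eqref{MatrixQ}.
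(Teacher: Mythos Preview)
Your proposal and the paper follow essentially the same route through the association scheme $\Delta$. Your identification $\mathrm{Im}(A_n^{T})=V_0\perp V_1$ via $A_n^{T}A_n=qB_0+B_1$ is a clean alternative to the paper's point-pencil argument (Theorem~\ref{pnt-pnc}), and your handling of $V_3$ via type~II spreads mirrors Lemma~\ref{Allspans}(1). The genuine gap is the $V_2$-step: you correctly isolate it as the crux, but you leave both suggested finishes --- irreducibility of $V_2$ under $\AGL(n,q)$, and the hyperplane-counting fallback --- as unproven sketches. As written, the proposal does not establish that the type~III spread differences fill out $V_2$, so the argument is incomplete.

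The paper closes exactly this gap, and in fact along your ``module-theoretic'' line. The point you are missing is that the irreducibility of each $V_i$ under $\AGL(n,q)$ \emph{is} available for free: the orbits of $\AGL(n,q)$ on ordered pairs of affine lines are precisely the four relations $\mathcal{R}_0,\mathcal{R}_1,\mathcal{R}_2,\mathcal{R}_3$, so the centralizer algebra of the permutation module coincides with the (commutative) Bose--Mesner algebra, and hence the common eigenspaces are irreducible $\AGL(n,q)$-modules. The paper packages this as Theorem~\ref{Conclusiontheorem0} (Delsarte): constancy of $\chi_{\mathcal{L}}\cdot\chi_{\mathcal{S}^{\theta}}$ over all $\theta\in\AGL(n,q)$ forces $E_k\chi_{\mathcal{L}}=0$ for every $k\geq 1$ with $E_k\chi_{\mathcal{S}}\neq 0$. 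One then only needs a \emph{single} spread $\mathcal{S}$ whose characteristic vector has nonzero projection onto both $V_2$ and $V_3$; the paper exhibits a type~III$^{+}$ spread and verifies $E_2\chi_{\mathcal{S}}\neq 0\neq E_3\chi_{\mathcal{S}}$ via its inner distribution (Lemma~\ref{Allspans}(2)). With that in hand the conclusion $\chi_{\mathcal L}\in V_0\perp V_1$ is immediate --- no separate $V_3$ argument and no ad hoc counting over hyperplanes is needed.
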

To prove this theorem, we make use of the association scheme of Section 4. Let us recall $\Delta$ from Construction \ref{AssoScheme}, then we first start with the concept of inner distributions.

\subsection{Inner distribution}
\begin{Def}(\cite[Section 2.5]{DistanceRegGraph} and \cite[Definition: Section 5, (10)]{MT2009})
	Consider a $d$-class association scheme $(X, \mathcal{R})$ and let $\cL$ be a subset of $X$, then we can consider its characteristic vector $\chi_\cL$. For this vector we can define its \emph{inner distribution} as the row vector \(u=\left( u_0, \, u_1, \, u_2, \, ..., \, u_d\right)\) with elements in $\mathbb{R}$, for which it holds that
	$$u_i= \frac{1}{|\cL|}|\mathcal{R}_i \cap (\cL \times \cL)|.$$ 
\end{Def}
\begin{Opm}
	Note that for the inner distribution \(u=\left( u_0, \, u_1, \, u_2, \, ..., \, u_d\right) \) of a certain characteristic vector $\chi_\cL$, it holds that
	$$u_i= \frac{1}{|\cL|} \chi_\cL^T \cdot B_i \cdot \chi_\cL,$$
	for $0 \leq i\leq d$.
\end{Opm}
The following theorem will give us a way to observe in which eigenspaces of $\Delta$ a characteristic vector lies in.
\begin{St}(\cite[Lemma 2.5.1 and Proposition 2.5.2]{DistanceRegGraph})\label{InnerDSpan}
	Consider a $d$-class association scheme $\Gamma=(X, \mathcal{R})$ and let $\mathcal{A}$ be its Bose-Mesner algebra. Denote the idempotent basis of $\mathcal{A}$ by $\{E_i \mid 0\leq i\leq d\}$, with common eigenspaces $V_0,...,V_d$. Then it follows for every subset $\cL$ of $X$, that its characteristic vector $\chi_\cL \in \mathbb{R}^d$ can be written as follows
		$$\chi_\cL= a_0 v_0+a_1v_1+\cdots+a_dv_d,$$
	with \(v_i \in V_i\) and \(a_i \in \mathbb{R}\) for each \(0\leq i \leq d\). If $u$ is the inner distribution of $\chi_\cL$, then the following properties are equivalent for fixed $0\leq i \leq d$
\begin{enumerate}
\item \((u \cdot Q)_i=0\), with $Q$ the dual eigenvalue matrix of $\Gamma$.
\item$E_i\cdot \chi_\cL=0$.
\end{enumerate}
This last property implies that the projection of $\chi_\cL$ onto the eigenspace $V_i$ is zero, thus $a_i=0$.
\end{St}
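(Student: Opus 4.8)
The plan is to compute the scalar $\chi_\cL^T E_i \chi_\cL$ in two different ways and compare the outcomes. The whole argument rests on the fact, recalled just before the statement, that each idempotent $E_i$ is the \emph{orthogonal} projection onto the eigenspace $V_i$; in particular $E_i$ is symmetric and satisfies $E_i^2 = E_i$. Throughout I assume $\cL \neq \emptyset$, so that the inner distribution $u$ is defined; the empty case is immediate since then $\chi_\cL = \bar 0$ and both properties hold trivially.

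First I would pass from the inner distribution to matrix language. By the remark following the definition of the inner distribution, $u_j = \frac{1}{|\cL|}\chi_\cL^T B_j \chi_\cL$ for each $j$. Substituting this into the defining relation $E_i = \frac{1}{|X|}\sum_{j} Q_{ji} B_j$ of the dual eigenvalue matrix gives
$$\chi_\cL^T E_i \chi_\cL = \frac{1}{|X|}\sum_{j} Q_{ji}\,\chi_\cL^T B_j \chi_\cL = \frac{|\cL|}{|X|}\sum_{j} u_j Q_{ji} = \frac{|\cL|}{|X|}(uQ)_i.$$
On the other hand, using that $E_i$ is a symmetric idempotent,
$$\chi_\cL^T E_i \chi_\cL = \chi_\cL^T E_i^T E_i \chi_\cL = (E_i\chi_\cL)^T(E_i\chi_\cL) = \|E_i\chi_\cL\|^2.$$
Equating the two displays yields $(uQ)_i = \frac{|X|}{|\cL|}\|E_i\chi_\cL\|^2$. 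Since the right-hand side is a nonnegative multiple of a squared norm, it vanishes exactly when $E_i\chi_\cL = 0$, which is the desired equivalence of properties (1) and (2).

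For the concluding remark I would invoke the eigenspace decomposition $\chi_\cL = \sum_{j} a_j v_j$ with $v_j \in V_j$, which exists because the common eigenspaces $V_0, \dots, V_d$ span $\mathbb{R}^{|X|}$. Since $E_i$ projects onto $V_i$ and annihilates every $V_j$ with $j \neq i$, we obtain $E_i\chi_\cL = a_i v_i$, so $E_i\chi_\cL = 0$ forces the $V_i$-component $a_i v_i$ to vanish, i.e. $a_i = 0$ whenever $v_i \neq 0$. I do not anticipate a genuine obstacle: the only care needed is to keep the indexing of $Q$ consistent between the defining relation $E_i = \frac{1}{|X|}\sum_j Q_{ji}B_j$ and the product $(uQ)_i = \sum_j u_j Q_{ji}$, and to note that the two-sided rewriting of $E_i$ as $E_i^T E_i$ in the norm computation is precisely what its being an orthogonal projection provides.
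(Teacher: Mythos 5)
Your proposal is correct: the double computation of $\chi_\cL^T E_i \chi_\cL$, once via $E_i = \frac{1}{|X|}\sum_j Q_{ji}B_j$ to get $\frac{|\cL|}{|X|}(uQ)_i$ and once via the symmetric idempotency of $E_i$ to get $\|E_i\chi_\cL\|^2$, is exactly the standard argument behind this statement, which the paper itself does not prove but cites from Brouwer--Cohen--Neumaier (Lemma 2.5.1 and Proposition 2.5.2). Your indexing of $Q$ is consistent with the paper's convention $E_j = \frac{1}{n}\sum_i Q_{ij}B_i$, and your handling of the empty set and of the final projection remark $E_i\chi_\cL = a_i v_i$ is sound.
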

Now we mention the next very useful theorem stated in \cite{Delsarte1976}. Our formulation is based on unpublished notes by Klaus Metsch.

\begin{St}\cite[Theorem 6.8]{Delsarte1976}\label{Conclusiontheorem0}
	Let $\Gamma=(X, \mathcal{R})$ be a $d$-class association scheme, with $\{E_i \mid 0\leq i\leq d\}$ the idempotent basis of the Bose-Mesner algebra. Suppose $G$ is a subgroup of $Aut(\Gamma)$ that acts transitively on $X$ and whose orbits on $X\times X$ are the relations $\mathcal{R}_0,..., \mathcal{R}_d$.  Let $\chi$ and $\psi$ be vectors of $\mathbb{R}^{|X|}$. Then the following two statements are equivalent.
	\begin{enumerate}
		\item For all $k\geq 1$, we have $E_k\cdot \chi=0$ or $E_k\cdot \psi=0$.
		\item  \( \chi\cdot \psi^g\) is constant for all \(g \in G\).
	\end{enumerate}
	
\end{St}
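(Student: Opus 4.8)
The plan is to prove both implications by decomposing $\chi$ and $\psi$ along the common eigenspaces and then comparing the first and second moments of the function $f(g):=\chi\cdot\psi^{g}=\chi^{T}P_{g}\psi$ over $G$, where $P_{g}$ is the permutation matrix of $g$. I would set $\chi_{k}:=E_{k}\chi$ and $\psi_{k}:=E_{k}\psi$, so that $\chi=\sum_{k}\chi_{k}$ and $\psi=\sum_{k}\psi_{k}$ with $\chi_{k},\psi_{k}\in V_{k}$. Since $g\in\mathrm{Aut}(\Gamma)$, the matrix $P_{g}$ commutes with every $B_{i}$, hence with every $E_{i}$, so $P_{g}$ preserves each $V_{k}$ and acts orthogonally on it. Because the $V_{k}$ are mutually orthogonal, $\chi_{k}^{T}P_{g}\psi_{l}=0$ whenever $k\neq l$, and therefore $f(g)=\sum_{k}\chi_{k}^{T}P_{g}\psi_{k}$. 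The implication $(1)\Rightarrow(2)$ is then immediate: if for each $k\geq 1$ one of $\chi_{k},\psi_{k}$ vanishes, every term with $k\geq 1$ is identically zero, while for $k=0$ we have $\psi_{0}\in V_{0}=\langle\mathbf{1}\rangle$ and $P_{g}\mathbf{1}=\mathbf{1}$, so $\chi_{0}^{T}P_{g}\psi_{0}=\chi_{0}^{T}\psi_{0}$ does not depend on $g$, whence $f$ is constant.

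For the converse $(2)\Rightarrow(1)$, the key tool I would invoke is that averaging a conjugation over $G$ lands in the Bose--Mesner algebra. For any matrix $M$ the operator $\tfrac{1}{|G|}\sum_{g}P_{g}MP_{g}^{T}$ commutes with every $P_{h}$, and by hypothesis the orbits of $G$ on $X\times X$ are exactly the relations, so the centraliser algebra of $G$ equals $\mathcal{A}=\langle E_{0},\dots,E_{d}\rangle$; hence the average lies in $\mathcal{A}$ and acts as a scalar on each $V_{k}$. Taking $M=\psi\psi^{T}$ and expanding, each piece $\tfrac{1}{|G|}\sum_{g}P_{g}\psi_{k}\psi_{l}^{T}P_{g}^{T}\in\mathcal{A}$ maps $V_{l}$ into $V_{k}$, so the cross terms ($k\neq l$) vanish, while the diagonal terms are multiples of $E_{k}$ whose scalar I would fix by taking traces. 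This yields
\[
S:=\frac{1}{|G|}\sum_{g\in G}P_{g}\psi\psi^{T}P_{g}^{T}=\sum_{k=0}^{d}\frac{\lVert\psi_{k}\rVert^{2}}{m_{k}}\,E_{k},\qquad m_{k}:=\operatorname{rank}E_{k}.
\]

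Next I would compute the two moments. Since $G$ is transitive, $\tfrac{1}{|G|}\sum_{g}P_{g}=E_{0}$, so the mean is $\tfrac{1}{|G|}\sum_{g}f(g)=\chi^{T}E_{0}\psi=\chi_{0}^{T}\psi_{0}$, whereas the second moment is
\[
\frac{1}{|G|}\sum_{g\in G}f(g)^{2}=\chi^{T}S\chi=\sum_{k=0}^{d}\frac{\lVert\psi_{k}\rVert^{2}\lVert\chi_{k}\rVert^{2}}{m_{k}}.
\]
Here the $k=0$ term equals $\lVert\psi_{0}\rVert^{2}\lVert\chi_{0}\rVert^{2}=(\chi_{0}^{T}\psi_{0})^{2}$, because $m_{0}=1$ and $\chi_{0},\psi_{0}$ are parallel in the one-dimensional space $V_{0}$. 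Assuming $(2)$, say $f(g)\equiv c$, the mean gives $c=\chi_{0}^{T}\psi_{0}$ and the second moment gives $c^{2}$, so subtracting the two displays leaves $\sum_{k\geq 1}\lVert\psi_{k}\rVert^{2}\lVert\chi_{k}\rVert^{2}/m_{k}=0$. As every summand is nonnegative, each vanishes, i.e.\ $\psi_{k}=0$ or $\chi_{k}=0$ for all $k\geq 1$, which is exactly $(1)$.

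The step I expect to be the main obstacle is justifying that $S$ is the stated combination of the $E_{k}$: this is precisely where the hypothesis that the $G$-orbits on $X\times X$ are the relations $\mathcal{R}_{0},\dots,\mathcal{R}_{d}$ is used, through the identification of the centraliser algebra of $G$ with the Bose--Mesner algebra $\mathcal{A}$. The remaining work — the trace normalisations giving the coefficients $\lVert\psi_{k}\rVert^{2}/m_{k}$ and the $k=0$ bookkeeping — is routine.
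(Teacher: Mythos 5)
Your proof is correct, but note that there is nothing in the paper to compare it to: the paper does not prove this statement at all, importing it as \cite[Theorem 6.8]{Delsarte1976} (with the formulation attributed to unpublished notes of Klaus Metsch). What you have written is a complete, self-contained derivation along the classical lines of Delsarte's own argument. The two hypotheses enter exactly where they must: transitivity of $G$ gives $\tfrac{1}{|G|}\sum_{g}P_{g}=\tfrac{1}{|X|}J=E_{0}$ (via orbit--stabilizer, each entry of the average is $|G_{y}|/|G|=1/|X|$), which fixes the mean of $f$; and the hypothesis that the $G$-orbits on $X\times X$ are $\mathcal{R}_{0},\dots,\mathcal{R}_{d}$ identifies the centralizer algebra of the permutation representation (spanned by the orbit matrices) with $\mathcal{A}$, which is what puts $S$ in $\mathrm{span}\{E_{0},\dots,E_{d}\}$. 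The step you flagged as the main obstacle goes through cleanly: each averaged block $S_{kl}=\tfrac{1}{|G|}\sum_{g}P_{g}\psi_{k}\psi_{l}^{T}P_{g}^{T}$ commutes with all $P_{h}$ by reindexing, hence lies in $\mathcal{A}$; for $k\neq l$ it kills every $V_{j}$ with $j\neq l$ and maps $V_{l}$ into $V_{k}$, forcing $S_{kl}=0$, while $\mathrm{tr}(S_{kk})=\lVert\psi_{k}\rVert^{2}$ pins down the coefficient $\lVert\psi_{k}\rVert^{2}/m_{k}$ of $E_{k}$. The $k=0$ bookkeeping ($m_{0}=1$, $\chi_{0},\psi_{0}$ parallel to $\mathbf{1}$, so the $k=0$ term of the second moment equals $(\chi_{0}^{T}\psi_{0})^{2}$) is also right, and the forward implication correctly needs neither hypothesis beyond $P_{g}$ commuting with the $E_{i}$. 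In short: a valid proof, and arguably a useful addition, since the paper leaves the reader to chase the reference.
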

\begin{Opm}
	We know that property (1) is equivalent with the fact that both vectors lie in opposite (common) eigenspaces besides $V_0$. 	
\end{Opm}
\begin{Opm}
	A second observation is that in the $3$-class association scheme $\Delta$, the group $\AGL(n,q)$ acts indeed transitively on pairs of lines of the same type in $\AG(n,q)$. It is also clear that elements of $\AGL(n,q)$ send line spreads of type I and type II to line spreads of type I and type II respectively.
\end{Opm}
The same happens for line spreads of type III, we explicitly proved this fact. But first we give a definition.
\begin{Def}
Let $\mathcal{S}$ be a line spread of type III, with the property that all the chosen points $p_i$ in $\pi_{n-2}$ are chosen differently. Then we call $\mathcal{S}$ a line spread of type $\text{III}^+$.
\end{Def}
\begin{Le}\label{ThetaMaintainTypeIII}
  	The affine collineation group AGL($n,q$) sends spreads of type III to spreads of type III. In particular, it sends spreads of type $\text{III}^+$ to spreads of type $\text{III}^+$.
 \end{Le}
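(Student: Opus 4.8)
The plan is to show that an arbitrary $\theta \in \AGL(n,q)$ maps a type III spread to another type III spread by tracking how $\theta$ acts on the combinatorial data defining such a spread. Recall that a type III spread is built from an $(n-2)$-space $\pi_{n-2} \subseteq \pi_\infty$, the $q$ affine hyperplanes $\pi_1,\dots,\pi_q$ through $\pi_{n-2}$ (other than $\pi_\infty$), and a choice of $(k-1)$-space $\tau_i \subseteq \pi_{n-2}$ for each $i$ (here $k=1$, so each $\tau_i$ is a point $p_i \in \pi_{n-2}$). First I would recall that any $\theta \in \AGL(n,q)$ extends to a collineation of the projective closure $\PG(n,q)$ that fixes $\pi_\infty$ setwise, since affine transformations are exactly the projective collineations stabilizing the hyperplane at infinity. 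This is the key structural fact that makes the whole argument go through.

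Next I would verify that $\theta$ preserves each ingredient of the construction. Since $\theta$ fixes $\pi_\infty$, it maps the $(n-2)$-space $\pi_{n-2} \subseteq \pi_\infty$ to another $(n-2)$-space $\pi_{n-2}' := \theta(\pi_{n-2}) \subseteq \pi_\infty$. Because $\theta$ is a collineation, it maps the pencil of hyperplanes through $\pi_{n-2}$ to the pencil of hyperplanes through $\pi_{n-2}'$; in particular, as $\theta$ fixes $\pi_\infty$, it permutes the $q$ affine hyperplanes $\pi_1,\dots,\pi_q$ among the $q$ affine hyperplanes $\pi_1',\dots,\pi_q'$ through $\pi_{n-2}'$, inducing a permutation $\sigma$ so that $\theta(\pi_i) = \pi_{\sigma(i)}'$. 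Likewise, each chosen point $p_i \in \pi_{n-2}$ is sent to a point $p_i' := \theta(p_i) \in \pi_{n-2}'$, and the ``not all equal'' condition is preserved because $\theta$ is injective on points. Setting $\tau_{\sigma(i)}' := \theta(p_i)$, I would then check that a line $K$ lies in the original spread $\mathcal{S}$ if and only if $\theta(K)$ satisfies $\tau_j' \subseteq \theta(K) \subseteq \pi_j'$ for the appropriate index $j = \sigma(i)$; this follows since incidence is preserved by collineations and $\theta(K)$ is again an affine line. Hence $\theta(\mathcal{S})$ is exactly the type III spread determined by $\pi_{n-2}'$ and the data $(\tau_j', \pi_j')$.

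The final clause about type $\text{III}^+$ is then immediate from the same injectivity observation: if all the chosen points $p_i$ in $\pi_{n-2}$ are distinct, then their images $p_i' = \theta(p_i)$ are distinct points in $\pi_{n-2}'$ because $\theta$ acts injectively on the point set of $\PG(n,q)$. Thus the defining property of type $\text{III}^+$ is preserved, and $\theta(\mathcal{S})$ is again of type $\text{III}^+$.

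I expect no serious obstacle here; the statement is essentially bookkeeping once one invokes the fact that $\AGL(n,q)$ consists precisely of the collineations of $\PG(n,q)$ fixing $\pi_\infty$. The only point requiring mild care is confirming that $\theta$ genuinely permutes the affine hyperplanes through $\pi_{n-2}$ (rather than sending one of them to $\pi_\infty$), which is guaranteed by $\theta(\pi_\infty) = \pi_\infty$ together with the bijectivity of $\theta$ on the hyperplane pencil. Making the correspondence between the relations $\mathcal{R}_0,\dots,\mathcal{R}_3$ and the spread structure explicit is the part most worth writing out carefully, so that the reader sees the permutation $\sigma$ and the relabeled data $(\tau_j', \pi_j')$ clearly.
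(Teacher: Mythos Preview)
Your proposal is correct and follows essentially the same approach as the paper's proof: both track how an element of $\AGL(n,q)$, viewed as a collineation of $\PG(n,q)$ fixing $\pi_\infty$, carries the defining data $(\pi_{n-2}, \{\pi_i\}, \{p_i\})$ of a type III spread to data of the same form, with injectivity of $\theta$ ensuring distinctness of the $p_i$ is preserved for type $\text{III}^+$. Your write-up is slightly more explicit about the permutation $\sigma$ of the hyperplanes, but the argument is the same; the remark about the relations $\mathcal{R}_0,\dots,\mathcal{R}_3$ is extraneous to this lemma and can be omitted.
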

\begin{proof}
	Consider \(\mathcal{S}\) to be a line spread of type III, defined by an $(n-2)$-space $\pi_{n-2}\subseteq \pi_\infty$, the set of hyperplanes $H= \{\pi_i \mid i \in \{1,...,q \} \}$ and the $q$ points \(p_i\in \pi_{n-2}\). If we now consider $\theta \in$ AGL($n,q$), then \(\pi_{n-2}^\theta\subseteq \pi_\infty\) and all the hyperplanes of \(H= \{ \pi_i \mid i \in \{1,...,q\}\}\) are sent to different hyperplanes through \(\pi_{n-2}^\theta\). Also all the points \(p_i\) are sent to points \(p_i^\theta\in \pi_{n-2}^\theta\), which if they all are different points they shall remain so. We conclude that 
	$$\mathcal{S}^\theta= \{K^\theta \in \Phi_1 \mid p_i\in K \subseteq \pi_i \text{ for some } i \}=\{K' \in \Phi_1 \mid p_i^\theta\in K' \subseteq \pi_i^\theta \text{ for some } i \},$$
	which is of the required form.
\end{proof}

\subsection{About the common eigenspaces}

In this section we give a basis for $V_0\perp V_1$ and $V_0\perp V_3$, and give a spanning set for $V_0\perp V_2 \perp V_3$ in the association scheme $\Delta$ from Construction  \ref{AssoScheme}.
\begin{Def}
A point-pencil in $\PG(n,q)$ or $\AG(n,q)$ is the set of lines through a fixed point in $\PG(n,q)$ or $\AG(n,q)$ respectively.
\end{Def}
\begin{St}(\cite[Theorem 9.5]{DeBruyn})\label{FullRankIncidence}
	The point-line incidence matrix of $\AG(n,q)$  and $\PG(n,q)$ has full rank, which equals the number of points in $\AG(n,q)$ and $\PG(n,q)$ respectively. Hence the rows of these incidence matrices, which correspond to points and give point-pencils are linearly independent.
\end{St}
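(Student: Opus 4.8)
The plan is to prove the stronger statement that the point-line incidence matrix $N$ (that is, $A_n$ or $P_n$ specialised to $k=1$) has full row rank over $\mathbb{R}$ by showing that the Gram matrix $NN^T$ is nonsingular. Write $v$ for the number of points of the geometry and $r$ for the number of lines through a fixed point. The entry of $NN^T$ indexed by an ordered pair of points $(p,p')$ counts the lines incident with both $p$ and $p'$. First I would record the two elementary incidence counts, which turn out to be identical in $\AG(n,q)$ and $\PG(n,q)$: every point lies on exactly $r=\frac{q^n-1}{q-1}$ lines (in the projective case the lines through $p$ correspond to the points of the quotient $\PG(n-1,q)$, and in the affine case to the directions, i.e.\ the points of $\pi_\infty\cong\PG(n-1,q)$), while any two distinct points lie on exactly one common line. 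Consequently, in both geometries,
$$NN^T = (r-1)\,I + J,$$
where $I$ is the identity and $J$ the all-ones matrix of order $v$.

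Next I would diagonalise $(r-1)I+J$. Since $J$ has eigenvalue $v$ on the all-ones vector and eigenvalue $0$ on its orthogonal complement, the matrix $NN^T$ has eigenvalues $(r-1)+v$ (simple) and $r-1$ (with multiplicity $v-1$). Because $n\geq 3$ we have $r-1=\frac{q^n-q}{q-1}>0$, so every eigenvalue is strictly positive; hence $NN^T$ is positive definite, and in particular invertible. Therefore $\operatorname{rank}(N)=\operatorname{rank}(NN^T)=v$, which is exactly the number of rows of $N$, so $N$ has full row rank and its rows---the characteristic vectors of the point-pencils---are linearly independent.

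There is essentially no serious obstacle here: the whole argument collapses to the two standard incidence counts together with the spectral decomposition of $aI+J$. The only point needing a little care is to verify that the counts genuinely coincide for the affine and projective geometries and that $r-1>0$, so that $NN^T$ is \emph{positive definite} rather than merely positive semidefinite; both facts are immediate once $n\geq 3$, which is the standing hypothesis in this section.
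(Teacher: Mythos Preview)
Your argument is correct and is in fact the standard way to establish this: both $\AG(n,q)$ and $\PG(n,q)$ are linear spaces (any two points span a unique line), so $NN^T=(r-1)I+J$, and positivity of $r-1$ makes $NN^T$ positive definite. A small remark: you only need $n\geq 2$ for $r-1=\frac{q^n-q}{q-1}>0$, so the hypothesis $n\geq 3$ is stronger than necessary here.

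As for comparison with the paper: the paper does not supply its own proof of this statement at all---it simply quotes it as \cite[Theorem 9.5]{DeBruyn} and moves on. Your Gram-matrix argument is exactly the classical proof one finds in that reference (and in most incidence-geometry textbooks), so there is no substantive difference to discuss.
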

\begin{Le}\cite[Lemma 2.2.1 (ii)]{DistanceRegGraph}\label{Span}
		Consider the dual eigenvalue matrix $Q$ in an association scheme, then \(Q_{0i}= \dim(V_i)\). 
\end{Le}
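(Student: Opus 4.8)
The plan is to exploit the fact, recorded earlier in the excerpt, that each idempotent $E_i$ is the \emph{orthogonal} projection onto the eigenspace $V_i$. Since $E_i$ is then a symmetric idempotent matrix, its only eigenvalues are $0$ and $1$, so its trace equals its rank, which in turn equals the dimension of its image $V_i$. Thus it suffices to compute $\operatorname{trace}(E_i)$ and to show that this quantity equals $Q_{0i}$.

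To compute the trace, I would expand $E_i$ in the basis of adjacency matrices using the defining relation $E_i = \frac{1}{n}\sum_{k=0}^d Q_{ki} B_k$, where $n = |X|$. Taking traces and using linearity gives $\operatorname{trace}(E_i) = \frac{1}{n}\sum_{k=0}^d Q_{ki}\operatorname{trace}(B_k)$.

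The decisive observation is that $B_0$ is the adjacency matrix of the identity relation $\mathcal{R}_0$, so $B_0 = I$ and $\operatorname{trace}(B_0) = n$, whereas for every $k \geq 1$ the relation $\mathcal{R}_k$ is disjoint from the identity relation, so $B_k$ has zero diagonal and $\operatorname{trace}(B_k) = 0$. Hence the sum collapses to $\operatorname{trace}(E_i) = \frac{1}{n}\, Q_{0i}\cdot n = Q_{0i}$, and combining this with the first paragraph yields $Q_{0i} = \dim(V_i)$.

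There is essentially no serious obstacle here: the argument is a short trace computation, and the only inputs beyond elementary linear algebra are the already-established facts that $E_i$ is an orthogonal projection and that $B_0 = I$ is the unique adjacency matrix with nonzero trace. The one mild point to keep straight is the index convention in the definition of $Q$, namely that $Q_{0i}$ is precisely the coefficient of $B_0 = I$ in the expansion of $E_i$, which is exactly the term that survives upon taking traces.
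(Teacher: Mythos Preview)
Your argument is correct and is precisely the standard proof of this fact; the paper itself does not supply a proof but merely cites \cite[Lemma 2.2.1 (ii)]{DistanceRegGraph}, where exactly this trace computation appears.
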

We now prove the following theorem that characterizes the space $V_0\perp V_1$.
\begin{St}\label{pnt-pnc}
 	Consider the affine space AG($n,q$) and the $3$-class association scheme $\Delta$ (see Construction \ref{AssoScheme}). Then the point-pencils form a basis of the space $V_0\perp V_1$.
 \end{St}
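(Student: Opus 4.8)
The plan is to show two things: first, that every point-pencil lies in $V_0 \perp V_1$, and second, that the point-pencils span a space of dimension exactly $\dim(V_0) + \dim(V_1) = 1 + Q_{01}$, while being linearly independent. Combined, these force the point-pencils to be a basis of $V_0 \perp V_1$.

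First I would verify the containment. Fix an affine point $p$ and let $\chi_p$ be the characteristic vector of the point-pencil through $p$, i.e.\ the set of affine lines through $p$. To show $\chi_p \in V_0 \perp V_1$, by Theorem \ref{InnerDSpan} it suffices to check that the projections $E_2 \chi_p$ and $E_3 \chi_p$ vanish, equivalently that the inner distribution $u$ of $\chi_p$ satisfies $(u \cdot Q)_2 = 0$ and $(u \cdot Q)_3 = 0$. I would compute the inner distribution directly: for the point-pencil through $p$, two distinct lines always meet in the affine point $p$, so they are in relation $\mathcal{R}_1$; hence $u = (u_0, u_1, u_2, u_3)$ has $u_2 = u_3 = 0$, with $u_0 = 1$ and $u_1$ equal to the number of other lines through $p$ (namely $\frac{q^n - q}{q-1}$, one fewer than the total). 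Plugging this $u$ into the columns of $Q$ from \eqref{MatrixQ} and checking that the $2$- and $3$-entries of $u \cdot Q$ vanish is then a routine verification using the explicit entries of $Q$. (By the transitivity of $\AGL(n,q)$ on affine points, it is enough to do this for one point $p$.)

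Next I would establish linear independence and the dimension count. Linear independence of the point-pencils of $\AG(n,q)$ is immediate from Theorem \ref{FullRankIncidence}, which states that the rows of the affine point-line incidence matrix $A_n$—which are exactly the characteristic vectors of the point-pencils—are linearly independent. Thus the point-pencils span a space of dimension equal to the number of affine points, which is $q^n$. To finish I would match this against $\dim(V_0 \perp V_1) = \dim(V_0) + \dim(V_1) = Q_{00} + Q_{01}$. By Lemma \ref{Span} these are the entries $Q_{00} = 1$ and $Q_{01} = q^n - 1$ of the dual eigenvalue matrix \eqref{MatrixQ}, so $\dim(V_0 \perp V_1) = 1 + (q^n - 1) = q^n$. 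Since the point-pencils live in $V_0 \perp V_1$, are linearly independent, and number exactly $q^n = \dim(V_0 \perp V_1)$, they form a basis.

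The main obstacle is the first step: confirming that the inner distribution of a point-pencil really annihilates the $V_2$ and $V_3$ components via the criterion of Theorem \ref{InnerDSpan}. This is where the specific structure of the affine scheme matters—one must be careful that lines through an affine point meet precisely in that point (and never at infinity), so that only $\mathcal{R}_1$ contributes, and then the arithmetic against the columns of $Q$ must check out exactly. Everything after that (independence and the dimension count) is bookkeeping supplied directly by Theorems \ref{FullRankIncidence} and Lemma \ref{Span}.
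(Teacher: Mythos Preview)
Your proposal is correct and follows essentially the same route as the paper: compute the inner distribution $u=(1,\frac{q^n-q}{q-1},0,0)$ of a point-pencil, use Theorem \ref{InnerDSpan} together with the explicit $Q$ to place all point-pencils in $V_0\perp V_1$, then match the count $q^n$ against $\dim(V_0\perp V_1)=1+(q^n-1)$ via Lemma \ref{Span} and invoke Theorem \ref{FullRankIncidence} for linear independence. The only cosmetic difference is that the paper records that the first two entries of $u\cdot Q$ are nonzero while you (equivalently, and more to the point) emphasize that the last two vanish.
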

\begin{proof}
	Let us first find the inner distribution of a point-pencil. It can be seen that this is equal to
 	\[ 
 	u=\left(1,\,\frac{q^{n} - q}{q - 1},\,0,\,0\right).\]
Thus we obtain that \[u\cdot Q=\left(\frac{q^{n} - 1}{q - 1},\, -\frac{{\left(q + 1\right)}
	q^{n-1} - 1 - q^{2  n-1}}{q - 1}, \, 0,\,0\right).\] Hence these first two entries will never be zero for $n>1$ and $q$ a prime power. So Theorem \ref{InnerDSpan} shows that all the point-pencils lie inside $V_0\perp V_1$. 
 
 From Lemma \ref{Span} and the description of $Q$ in Equation (\ref{MatrixQ}), we obtain that \(\dim(V_0\perp V_1)=1+ \left( q^n-1\right)=q^n \). This number is equal to the number of point-pencils in $\AG(n,q)$. Together with Lemma \ref{FullRankIncidence}, we have that the point-pencils form a basis for the space \(V_0\perp V_1\).
\end{proof}

We now give a second result on these eigenspaces.
\begin{Le}\label{Allspans}
In the affine space AG($n,q$) with association scheme $\Delta$ (see Construction \ref{AssoScheme}), we have the following:
\begin{enumerate}
\item The line spreads of type II form a basis for the space $V_0\perp V_3$.
\item The space $V_0 \perp V_2 \perp V_3$ is spanned by line spreads of type $\text{III}^+$  and for the characteristic vector $\chi_\mathcal{S}$ of such a line spread $\mathcal{S}$, it holds that $E_2 \cdot \chi_\mathcal{S}\not = 0 \not = E_3 \cdot \chi_\mathcal{S}$.
\end{enumerate}
\end{Le}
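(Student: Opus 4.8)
The plan is to prove both statements using the same machinery as in Theorem~\ref{pnt-pnc}, namely by computing inner distributions and applying Theorem~\ref{InnerDSpan} together with the dimension count from Lemma~\ref{Span} and the transitivity/collineation observations already established.

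\textbf{Part (1).} First I would compute the inner distribution of a line spread $\mathcal{S}$ of type II, which consists of all $q^{n-1}$ affine lines through a fixed point at infinity. Any two distinct lines of $\mathcal{S}$ meet precisely in that point at infinity, so they are always in relation $\mathcal{R}_2$; they are never affinely concurrent and never projectively skew. Hence the inner distribution is $u=\left(1,\,0,\,q^{n-1}-1,\,0\right)$. I would then form the product $u\cdot Q$ using the matrix $Q$ from Equation~(\ref{MatrixQ}) and verify that its entries in positions $1$ and $2$ vanish while the entry in position $3$ does not. By Theorem~\ref{InnerDSpan} this places $\chi_\mathcal{S}$ inside $V_0\perp V_3$ with a genuinely nonzero $V_3$-component. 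From Lemma~\ref{Span} and Equation~(\ref{MatrixQ}) one reads off $\dim(V_0\perp V_3)=1+Q_{03}=1+\frac{q^n-q}{q-1}=\frac{q^n-1}{q-1}$, which equals the number of points at infinity, hence the number of type II spreads. To conclude that these spreads form a \emph{basis}, I would argue linear independence: the type II spreads are indexed by points of $\pi_\infty\cong\PG(n-1,q)$, and the incidence structure between affine lines and their point at infinity is such that distinct spreads use disjoint line sets, so their characteristic vectors are supported on disjoint coordinate sets and are trivially independent.

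\textbf{Part (2).} For a line spread $\mathcal{S}$ of type $\text{III}^+$ I would similarly compute the inner distribution. Recall such a spread is built from an $(n-2)$-space $\pi_{n-2}\subseteq\pi_\infty$, the $q$ hyperplanes $\pi_i$ through it, and $q$ \emph{distinct} points $p_i\in\pi_{n-2}$, with the lines of $\mathcal{S}$ being the affine lines through some $p_i$ lying in $\pi_i$. The combinatorial task is to count, for a fixed line $K\in\mathcal{S}$, how many other lines of $\mathcal{S}$ meet it in an affine point ($\mathcal{R}_1$), meet it at infinity ($\mathcal{R}_2$), or are skew ($\mathcal{R}_3$); since $\AGL(n,q)$ acts on type $\text{III}^+$ spreads by Lemma~\ref{ThetaMaintainTypeIII}, these counts are independent of the choice of $K$, so the inner distribution is well defined. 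I would then compute $u\cdot Q$ and check that the first entry (corresponding to $V_1$) vanishes while the entries for $V_2$ and $V_3$ do not, giving $E_2\cdot\chi_\mathcal{S}\neq 0\neq E_3\cdot\chi_\mathcal{S}$ and placing $\chi_\mathcal{S}$ in $V_0\perp V_2\perp V_3$. For the spanning claim I would use the dimension $\dim(V_0\perp V_2\perp V_3)=1+Q_{02}+Q_{03}$, exhibit enough type $\text{III}^+$ spreads whose characteristic vectors are linearly independent (for instance by varying $\pi_{n-2}$, the hyperplanes, and the point assignment), and conclude that they span the full space.

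The main obstacle I anticipate is the type $\text{III}^+$ inner distribution computation in Part (2), where the counts $u_1,u_2,u_3$ depend delicately on the interaction between the distinct base points $p_i$ and the hyperplane geometry; getting the $\mathcal{R}_1$ count to produce exactly a zero $V_1$-component after multiplying by $Q$ is the crux, and I would cross-check it against the requirement that both the $V_2$ and $V_3$ projections survive. The spanning argument for Part (2) is a secondary difficulty: unlike Part (1), the type $\text{III}^+$ spreads overlap heavily in their line sets, so linear independence cannot be read off from disjoint supports and instead requires an explicit rank argument, most naturally by fixing enough structure and counting the resulting independent vectors against the known dimension $1+Q_{02}+Q_{03}$.
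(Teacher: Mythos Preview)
Your Part~(1) is essentially the paper's argument: same inner distribution $(1,0,q^{n-1}-1,0)$, same product with $Q$, same dimension count, and the linear independence via disjoint supports is exactly what the paper does.

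For Part~(2) your inner-distribution step is also on target; the paper computes $s_2=(1,\,0,\,q^{n-2}-1,\,q^{n-1}-q^{n-2})$ and checks $(s_2Q)_1=0$ while $(s_2Q)_2,(s_2Q)_3\neq 0$, which gives $E_2\chi_{\mathcal S}\neq 0\neq E_3\chi_{\mathcal S}$ and $\chi_{\mathcal S}\in V_0\perp V_2\perp V_3$.

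The genuine gap is your spanning argument. You propose to produce enough type $\text{III}^+$ spreads and run an explicit rank count against $1+Q_{02}+Q_{03}$. That dimension is of order $q^{2n-2}$, the spreads overlap heavily, and you offer no mechanism for establishing independence; this is not a plan so much as a hope, and the paper does \emph{not} proceed this way. Instead the paper invokes Theorem~\ref{Conclusiontheorem0}. Suppose the span of all type $\text{III}^+$ characteristic vectors were only $V_0\perp W_1$ with $W_1\subsetneq V_2\perp V_3$, and pick a nonzero $\psi$ in the orthogonal complement $U_1$ of $W_1$ inside $V_2\perp V_3$. Then $\chi_{\mathcal S}\cdot\psi=0$ for every type $\text{III}^+$ spread $\mathcal S$; by Lemma~\ref{ThetaMaintainTypeIII} the $\AGL(n,q)$-orbit of $\mathcal S$ stays within type $\text{III}^+$, so $\chi_{\mathcal S}\cdot\psi^g$ is constant (equal to $0$) over $g\in\AGL(n,q)$. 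Theorem~\ref{Conclusiontheorem0} then forces, for each $k\geq 1$, either $E_k\chi_{\mathcal S}=0$ or $E_k\psi=0$. Since $\psi\in V_2\perp V_3$ has $E_2\psi\neq 0$ or $E_3\psi\neq 0$, this contradicts $E_2\chi_{\mathcal S}\neq 0\neq E_3\chi_{\mathcal S}$.

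So the key idea you are missing is to use the transitivity of $\AGL(n,q)$ together with Delsarte's constant-inner-product criterion to get spanning for free, rather than attempting a direct linear-independence count that would be very hard to carry out.
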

\begin{proof}
\begin{enumerate}
\item This is done in a similar way as the previous lemma. The inner distribution of a line spread \(\mathcal{S}_1\) of type II is equal to
 \[s_1=\left(1,\,0,\,q^{n - 1} - 1,\,0\right).\]
From this we obtain that \[s_1\cdot Q=\left(q^{n - 1},\,0,\,0,\,\frac{q^{2 n-1} - q^{n}}{q -
 	1}\right).\]
The first and last entry will never be zero for $n>1$ and $q$ a prime power.
 So from Theorem \ref{InnerDSpan}, we obtain that
 \(\chi_{\mathcal{S}_1}\in V_0\perp V_3\). Note that these line spreads are in fact subsets of point-pencils in the hyperplane at infinity in $\PG(n,q)$. But due to the fact that no two subsets contain the same line, we know that these line spreads are also linearly independent. From Lemma \ref{Span} and the description of $Q$ in Equation (\ref{MatrixQ}), we obtain that
	$$\dim(V_0\perp V_3)= 1+\frac{q^n-q}{q-1}= \frac{q^n-1}{q-1}.$$
	This dimension is equal to the number of spreads of type II, which proves that these line spreads form a basis.

\item Analogously for a line spread $\mathcal{S}_2$ of type $\text{III}^+$. The inner distribution is equal to
$$s_2=\left(1,\,0,\,q^{n - 2} - 1,\,q^{n - 1} - q^{n - 2}\right),$$
such that $$ s_2 \cdot Q =\left(q^{n - 1},\,0,\,q^{2n-2} - q^{n-2},\,-\frac{{\left(q^{2}
		- q + 1\right)} q^{n-2} - q^{2  n-2}}{q - 1}\right).$$ 
The first and third entry will never be zero for \(n>1\) and \(q\) a prime power. The last entry needs some arguments. If $(q^2-q+1)q^{n-2}-q^{2n-2}=0$, then $q=0$ or $q(q-1)=q^n-1$ and thus $q=0$ or $q^{n-1}+...+q^2+1=0$. This statement is never true if $n>1$ and $q$ a prime power. Hence, using Theorem \ref{InnerDSpan}, we obtain that $\chi_{\mathcal{S}_2}\in V_0\perp V_2\perp V_3$ and especially we have that $E_2 \cdot \chi_{\mathcal{S}_2}\not = 0 \not = E_3 \cdot \chi_{\mathcal{S}_2}$. 

To show that $V_0\perp V_2 \perp V_3$ is spanned by line spreads of type $\text{III}^+$, we use Theorem \ref{Conclusiontheorem0}. Suppose that these line spreads would span $V_0\perp W_1$, with $V_2 \perp V_3= W_1 \perp U_1$, then we want to show that $W_1=V_2 \perp V_3$. If there exists a $\psi \in U_1\setminus \{0\}$, then we know that $E_2 \cdot \psi \not = 0$ or $E_3 \cdot \psi \not = 0$. Let us now consider a line spread $\mathcal{S}$ of type $\text{III}^+$, then we know that its characteristic vector $\chi_\mathcal{S} \in V_0\perp W_1 \subseteq V_0\perp V_2 \perp V_3$. Hence $\chi_\mathcal{S}$ lies in the complementary space $V_0\perp W_1$ of $U_1$, thus $\chi_\mathcal{S}\cdot \psi=0$. Due to Lemma \ref{ThetaMaintainTypeIII}, we have that for every $\theta \in \AGL(n,q)$ it holds that $\chi_{\mathcal{S}^\theta}\cdot \psi=0$. So from Theorem \ref{Conclusiontheorem0}, we obtain that  $E_2 \cdot \chi_\mathcal{S}  = 0$ or $E_3 \cdot \chi_\mathcal{S}= 0$. This is a contradiction with the end of the preceding paragraph.
\end{enumerate}
\end{proof}

\subsection{The proof of Theorem \ref{MAIN}}
\begin{proof}
Consider the association scheme $\Delta$ from Construction \ref{AssoScheme} and let  $\mathcal{L}$ be a line set in $\AG(n,q)$ such that for every line spread $\mathcal{S}$ it holds that $|\mathcal{L}\cap \mathcal{S}|=x$. 
Then our goal is to prove that $\chi_{\mathcal{L}} \in V_0\perp V_1$, since, from Theorem \ref{pnt-pnc}, it then follows that $\chi_{\mathcal{L}} \in$ Im($A_n^T$)  and hence $\mathcal{L}$ is a Cameron-Liebler line class of parameter $x$. 

Consider $\mathcal{S}$ to be a line spread of type $\text{III}^+$. Such a line spread exists if we can choose $q$ different points in $\pi_{n-2}$. This is clearly the case if $n\geq 3$. If we denote the characteristic vector of $\mathcal{S}$ by $\chi_{\mathcal{S}}$, we know by the definition of $\mathcal{L}$ that
$$\chi_{\mathcal{L}}\cdot \chi_{\mathcal{S}}=x.$$
In combination with Lemma \ref{ThetaMaintainTypeIII}, we know that
$$\chi_{\mathcal{L}}\cdot \chi_{\mathcal{S}^\theta}=x,$$
for all $\theta \in \AGL(n,q)$. Hence, from Theorem \ref{Conclusiontheorem0} and Lemma \ref{Allspans} (Property (2)) which states that $E_2\cdot \chi_\mathcal{S} \not=0$ and that $E_3\cdot \chi_\mathcal{S} \not=0$, we may conclude that $E_2\cdot \chi_\mathcal{L}=0=E_3\cdot \chi_\mathcal{L}$. Thus using Theorem \ref{InnerDSpan}, we obtain that
$$\chi_{\mathcal{L}} \in V_0\perp V_1.$$
This proves the theorem.
\end{proof}

\section{Classification results}\label{sec:class}
In this section we will focus on some classification results of Cameron-Liebler $k$-sets in $\AG(n,q)$ with certain parameters. From now on we will use all the equivalent definitions of Theorem \ref{EquivalencesGeneral} to describe Cameron-Liebler $k$-sets. In order to obtain a classification result, we will need the following result.
\begin{St} \cite[Theorem 3]{Tanaka}\label{ErdosKoRadoSt}
	
	Let $0 \leq t \leq k$ be positive integers. Let \(\mathcal{S}\) be a set of $k$-spaces in $\PG(n,q)$, pairwise intersecting in at least a $t$-space. If $n\geq 2k+1$, then
	$$|\mathcal{S}|\leq \begin{bmatrix}
	n-t \\
	k-t
	\end{bmatrix}_q.$$
	Equality holds if and only if \(\mathcal{S}\) is the set of all $k$-spaces through a fixed $t$-space, or $n=2k+1$ and \(\mathcal{S}\) is the set of all $k$-spaces inside a fixed $(2k-t)$-space. 
\end{St}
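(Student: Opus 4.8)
The plan is to pass to the vector-space model and exploit the algebraic structure of the Grassmann association scheme, in direct analogy with the methods of Sections \ref{sec:assoc} and the proof of Theorem \ref{MAIN}. Write $V=\mathbb{F}_q^{n+1}$, so that the $k$-spaces of $\PG(n,q)$ are the $K$-dimensional subspaces of $V$ with $K=k+1$; the hypothesis that two members meet in at least a $t$-space becomes $\dim(A\cap B)\ge T$ with $T=t+1$, and $n\ge 2k+1$ becomes $N:=n+1\ge 2K$. In these coordinates the target reads $|\mathcal{S}|\le\gauss{N-T}{K-T}_q$, and the two claimed extremal families are ``all $K$-spaces containing a fixed $T$-space'' and, only when $N=2K$, ``all $K$-spaces inside a fixed $(2K-T)$-space''. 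The ambient object is the Grassmann scheme $J_q(N,K)$, a $Q$-polynomial $K$-class scheme whose eigenvalues are the $q$-Eberlein polynomials, and whose common eigenspaces I order as $V_0\perp V_1\perp\cdots\perp V_K$ in the $Q$-polynomial ordering.

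First I would prove the upper bound via Delsarte's linear programming bound. A $T$-intersecting family is exactly a subset whose inner distribution $(a_0,\dots,a_K)$ (with relation $i$ meaning $\dim(A\cap B)=K-i$) is supported on $i\in\{0,\dots,K-T\}$. One then produces a feasible dual solution, equivalently a positive semidefinite combination $M=\sum_i c_iB_i$ of the adjacency matrices of $J_q(N,K)$ that vanishes on the allowed relations; the least eigenvalue of $M$, fed into a Hoffman-type ratio bound, forces $|\mathcal{S}|\le\gauss{N-T}{K-T}_q$. The most transparent route to the certificate is the $q$-analogue of the Frankl--Wilson argument through the inclusion matrix $W_{T,K}$ between $T$-spaces and $K$-spaces: the Gram-type identities for these inclusion maps supply exactly the combination $M$ one needs. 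The technical content of this step is checking positivity of the chosen combination uniformly in $T$.

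Second, for the equality characterization I would track when the bound is tight. As in Theorem \ref{InnerDSpan}, tightness forces the relevant coordinates of $(\text{inner distribution})\cdot Q$ to vanish, so $\chi_{\mathcal{S}}$ lies in the ``dual ball'' $V_0\perp V_1\perp\cdots\perp V_T$; this is precisely the $Q$-polynomial analogue of Theorem \ref{pnt-pnc}, where the point-pencils (the case $T=1$) were shown to span $V_0\perp V_1$. A subset whose characteristic vector has dual degree at most $T$ and which meets the bound is a completely regular code of minimal dual width, and Tanaka's classification of minimal-dual-width subsets of the Grassmann graph isolates exactly the two families in the statement. I would reconstruct this last step concretely: tightness makes $\mathcal{S}$ regular with respect to the stabilizer of a $T$-space, and a Hilton--Milner-type counting argument then rules out any family that is not a full $T$-pencil, except when $N=2K$, where a dimension/complementation argument produces the second family.

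The main obstacle is the equality case, and in particular the emergence of the second extremal family exactly at the boundary $N=2K$ (projectively $n=2k+1$): one must verify both that this family attains the bound and that no other configuration does. The size bound itself is essentially mechanical once the eigenvalues of $J_q(N,K)$ are in hand; the delicate part is converting ``tightness in the spectral bound'' into rigid geometric structure, where the boundary dimension genuinely has to be handled on its own.
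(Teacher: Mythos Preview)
The paper does not prove this statement at all: Theorem \ref{ErdosKoRadoSt} is simply quoted from \cite[Theorem 3]{Tanaka} as a known result, with no argument supplied. There is therefore no ``paper's own proof'' to compare your proposal against.

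As a separate remark, your sketch is broadly faithful to how the result is actually established in the literature. The bound is the vector-space Erd\H{o}s--Ko--Rado theorem of Frankl and Wilson, obtained via the inclusion matrices $W_{T,K}$ (equivalently, a Delsarte-type LP certificate in the Grassmann scheme $J_q(N,K)$), and Tanaka's contribution is precisely the equality characterization through the width/dual-width machinery for $Q$-polynomial schemes. Your identification of the equality case, and specifically the appearance of the second extremal family at the boundary $N=2K$, as the genuinely delicate step is accurate. What you have written is a plausible outline rather than a proof: the ``Hilton--Milner-type counting argument'' you allude to for converting minimal dual width into the geometric classification is not spelled out, and that is exactly where the work lies in \cite{Tanaka}. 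But since the present paper only cites the result, nothing further is required here.
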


Before we give the classification results, the reader should keep Example \ref{TrivialEx} in mind, where we gave some examples of Cameron-Liebler $k$-sets in $\PG(n,q)$. Note that by restriction to $\AG(n,q)$ we actually obtain fewer examples or stronger conditions on Cameron-Liebler $k$-sets. We first give the following lemma.
\begin{Le}\label{x=1Inn-1}
A non-empty set of $k$-spaces contained in a hyperplane of $\AG(n,q)$, is not a Cameron-Liebler $k$-set in AG($n,q$).
\end{Le}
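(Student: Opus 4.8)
The plan is to show that the characteristic vector of a non-empty set of $k$-spaces contained in a single hyperplane cannot lie in $\text{Im}(A_n^T)$, by exhibiting a $k$-spread with which it has the wrong intersection number. Suppose, towards a contradiction, that $\mathcal{L}$ is a Cameron-Liebler $k$-set of parameter $x$ contained in an affine hyperplane $H_A$ of $\AG(n,q)$. Since $\mathcal{L}$ is non-empty, Lemma \ref{CLkSetProp} (1) forces $x \geq 1$, so that $|\mathcal{L}| = x\gauss{n}{k}_q \geq \gauss{n}{k}_q$. On the other hand, \emph{every} $k$-space of $\mathcal{L}$ lives inside the single hyperplane $H_A$, which is an affine space $\AG(n-1,q)$, so $|\mathcal{L}|$ is at most the total number of affine $k$-spaces in $\AG(n-1,q)$.

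First I would compare these two bounds. The number of $k$-spaces in $\AG(n-1,q)$ is $q^{n-1-k}\gauss{n-1}{k}_q$, while $|\mathcal{L}| \geq \gauss{n}{k}_q$. The first step is therefore a counting estimate showing that $\gauss{n}{k}_q > q^{n-1-k}\gauss{n-1}{k}_q$ is false in general, so a pure cardinality argument will not suffice; hence I expect the main work to be the construction of a separating spread rather than a size comparison. The cleaner route is to use the equivalent characterisation via spreads (Lemma \ref{ConstInt}, or Theorem \ref{EquivalencesGeneral} (2)): a Cameron-Liebler $k$-set meets every $k$-spread in exactly $x$ elements. So I would instead produce two $k$-spreads of $\AG(n,q)$ that meet $\mathcal{L}$ in different numbers of $k$-spaces, contradicting the constant intersection property.

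The key step is to exploit the freedom in the Type III (or Type II) spreads relative to the hyperplane $H_A$ containing $\mathcal{L}$. Let $H_\infty = \pi_\infty$ and let $H$ be the projective closure of $H_A$; then $H \cap \pi_\infty =: \pi_{n-2}$ is an $(n-2)$-space at infinity. Using the $q$ affine hyperplanes $\pi_1,\dots,\pi_q$ through $\pi_{n-2}$ (one of which is $H$ itself), I would build a Type III spread all of whose $k$-spaces not lying in $H$ are chosen disjoint from $H_A$, so that $|\mathcal{L} \cap \mathcal{S}|$ equals the number of $k$-spaces of $\mathcal{L}$ lying in the one chosen slab $\tau \subseteq H$; by varying the $(k-1)$-directions $\tau_i$ one can force this count to differ across two such spreads whenever $\mathcal{L}$ is a proper, non-everything subset of $[H_A]_k$. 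The cleanest contradiction, however, comes from combining Lemma \ref{ConstInt} with the fact that $\mathcal{L}$ misses entire spreads: since the $k$-spaces of $\mathcal{L}$ all lie in $H_A$, any $k$-spread $\mathcal{S}$ of $\AG(n,q)$ meeting $\mathcal{L}$ in $x \geq 1$ elements must have at least one spread member inside $H_A$, yet a generic Type I spread (a projective spread restricted affinely) can be chosen to have \emph{no} member lying entirely inside $H_A$, giving $|\mathcal{L} \cap \mathcal{S}| = 0 \neq x$.

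The main obstacle is guaranteeing the existence of a $k$-spread of $\AG(n,q)$ none of whose members is contained in $H_A$, because projective $k$-spreads only exist when $(k+1)\mid(n+1)$; thus the Type I construction is not always available. To avoid this divisibility restriction I would rely on Type II spreads, which exist for all $n$ and $k$: take a $(k-1)$-space $K \subseteq \pi_\infty$ with $K \not\subseteq \pi_{n-2}$, so that $K \not\subseteq H$; then every member of the resulting Type II spread is an affine $k$-space through $K$, and since $K \not\subseteq H$ no such $k$-space can lie inside $H_A$. Hence this spread misses $\mathcal{L}$ entirely, forcing $x = |\mathcal{L} \cap \mathcal{S}| = 0$ and so $\mathcal{L} = \emptyset$, contradicting non-emptiness. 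The heart of the verification is the incidence check that a $k$-space through such a $K$ cannot be contained in $H$ (its point at infinity $K$ would have to lie in $\pi_{n-2} = H \cap \pi_\infty$), which is a short dimension argument; once that is in place the result follows immediately from the constant-intersection property of Lemma \ref{ConstInt}.
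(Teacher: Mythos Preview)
Your final argument is correct and is essentially the paper's own proof: exhibit a Type II $k$-spread through a $(k-1)$-space $K\subseteq\pi_\infty$ with $K\not\subseteq H\cap\pi_\infty$, observe that none of its members can lie in $H_A$, and conclude $x=0$ from Lemma \ref{ConstInt}, contradicting non-emptiness. The paper phrases this as producing two Type II spreads $\mathcal{S}_1,\mathcal{S}_2$ with $|\mathcal{L}\cap\mathcal{S}_1|\geq 1$ and $|\mathcal{L}\cap\mathcal{S}_2|=0$, but that is the same idea (your implicit use of ``non-empty $\Rightarrow x\geq 1$'' is exactly the role of $\mathcal{S}_1$). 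The detours through a cardinality comparison, Type I spreads, and Type III spreads are unnecessary and could be dropped entirely.
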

\begin{proof}
Let $\mathcal{L}$ be a Cameron-Liebler $k$-set in $\AG(n,q)$, that consists out of a set of $k$-spaces inside a hyperplane $\pi$. Pick a $k$-space $K\in \mathcal{L}$, which we can consider in the projective closure PG($n,q$). Then we can define a type II $k$-spread $\mathcal{S}_1$ as the set of affine $k$-spaces through  $K\cap \pi_\infty$. Analogously we can define  $\mathcal{S}_2$ as the set of affine $k$-spaces through another $(k-1)$-space at infinity that does not lie in $\pi$. It is clear that
$$|\mathcal{L}\cap \mathcal{S}_1|\not= |\mathcal{L}\cap \mathcal{S}_2|=0.$$
This is a contradiction with Lemma \ref{2to8}.
\end{proof}
This lemma gives the following classification result.
\begin{St}\cite[Theorem 4.1]{BDF}
Let $q \in\{2,3,4,5\}$. Then all Cameron-Liebler $k$-sets in $\AG(n,q)$ consist out of all the $k$-spaces through a fixed point, if $k+1,n-k \geq 2$ and either (a) $n \geq 5$ or (b) $n = 4$ and $q = 2$.
\end{St}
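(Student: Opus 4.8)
The plan is to reduce the affine classification to the projective one that is already available in Theorem \ref{Non-ExBD1}. First I would take an arbitrary Cameron-Liebler $k$-set $\mathcal{L}$ in $\AG(n,q)$ and apply Theorem \ref{CLkSpaceBasic} to view it as a Cameron-Liebler $k$-set in the projective closure $\PG(n,q)$. The crucial extra information retained from Construction \ref{IncidenceMatrix} is that the projective characteristic vector is $\binom{\chi_{\mathcal{L}}}{\bar 0}$, so this projective set is disjoint from $[\pi_\infty]_k$; that is, it contains no $k$-space lying in the hyperplane at infinity.

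Next, since $q \in \{2,3,4,5\}$, $k, n-k \geq 2$, and either $n \geq 5$ or $n=4, q=2$, Theorem \ref{Non-ExBD1} applies and forces the projective set to be one of the trivial examples of Example \ref{TrivialEx}, i.e.\ (up to the disjoint-union and complement operations of Lemma \ref{CLkSetProp}) built from point-pencils and sets of all $k$-spaces in a fixed hyperplane. I would then run through these possibilities and discard those that are not disjoint from $[\pi_\infty]_k$. A point-pencil survives precisely when its centre $P$ is an affine point (otherwise some $k$-space through $P$ lies in $\pi_\infty$), and in that case its affine trace is exactly an affine point-pencil, since every $k$-space through an affine point is affine. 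For a set of all $k$-spaces in a hyperplane $H$, the hypothesis $n-k \geq 2$ guarantees that $H \cap \pi_\infty$ is an $(n-2)$-space containing $k$-spaces, so such a set always meets $[\pi_\infty]_k$ unless its affine trace is empty; equivalently, any nonempty affine set of $k$-spaces inside a hyperplane is excluded directly by Lemma \ref{x=1Inn-1}. Handling the complements and unions in the same way leaves only the empty set, the full affine set, an affine point-pencil, and the complement of an affine point-pencil, which is the asserted classification.

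The main obstacle, and the step demanding the most care, is the bookkeeping of the combined union/complement examples in $\PG(n,q)$: for each I must verify that it either meets $[\pi_\infty]_k$ (and hence is not a legitimate lift of an affine set) or that its affine restriction collapses to one of the listed affine sets, invoking $n-k \geq 2$ at each step to locate $k$-spaces inside $H \cap \pi_\infty$. A second, genuinely separate difficulty is that Theorem \ref{Non-ExBD1} requires $k \geq 2$, whereas the statement also permits $k+1 \geq 2$, i.e.\ $k=1$. For the line case one cannot invoke the projective $k$-space classification directly; here I would instead use the equivalent descriptions of Theorem \ref{EquivalencesGeneral} together with the classification of Cameron-Liebler line classes, routing through the corresponding projective line-class results, to conclude that for these values of $q$ and $n$ the only surviving example is again the affine point-pencil.
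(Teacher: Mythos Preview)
Your approach is exactly the paper's: lift $\mathcal{L}$ to the projective closure via Theorem~\ref{CLkSpaceBasic}, invoke the projective classification Theorem~\ref{Non-ExBD1}, and discard the hyperplane-type examples using Lemma~\ref{x=1Inn-1}. The paper compresses this into a single sentence; your case analysis of the unions and complements is the natural unpacking of that sentence and is correct.

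One remark on your flagged obstacle for $k=1$: this is a genuine observation, but note that the paper's own proof is in the same position, since it also rests entirely on Theorem~\ref{Non-ExBD1}, whose hypothesis is $k\geq 2$. So the discrepancy between the stated condition $k+1\geq 2$ here and $k\geq 2$ in Theorem~\ref{Non-ExBD1} is either a typo in the present statement or a gap that the paper does not fill; in any case it is not a defect of your argument relative to the paper's. Your suggested detour for $k=1$ (routing through a separate projective line-class classification) is not needed to match the paper, and as stated is too vague to be convincing on its own.
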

\begin{proof}
Here we use the combination of Theorem \ref{CLkSpaceBasic_plus_lines}, Theorem \ref{Non-ExBD1} and Lemma \ref{x=1Inn-1}.
\end{proof}

\subsection{Cameron-Liebler $k$-sets with parameter $x=1$ in $\AG(n,q)$}
\begin{Ex}\label{Triv1AG}
Consider $\mathcal{L}$  as the set of $k$-spaces through a fixed affine point in $\AG(n,q)$. Then $\mathcal{L}$ is a Cameron-Liebler $k$-set in $\AG(n,q)$ of parameter $x=1$. This can be seen from  Corollary \ref{H6ProjToAff} together with the fact that $\mathcal{L}$ also is a Cameron-Liebler $k$-set in $\PG(n,q)$ of parameter $x=1$.
	
\end{Ex}
Using Theorem \ref{CharX=1PG(n,q)}, we know that for $n> 2k+1$ this example is the only example of a Cameron-Liebler $k$-set of parameter $x=1$  in PG($n,q$). If $n=2k+1$, the set of all $k$-spaces in a hyperplane also gives an example of a Cameron-Liebler $k$-set with parameter $x=1$. This fact gives the following theorem. 
The following theorem also proves the first part of Theorem \ref{Smallx} from the introduction.
\begin{St}\label{Chap6Charac1}
	Consider the affine space $\AG(n,q)$ and let \(\mathcal{L}\) be a Cameron-Liebler $k$-set with parameter $x=1$ in this affine space. If also $n \geq 2k+1$, then \(\mathcal{L}\) consists of all the $k$-spaces through an affine point.
\end{St}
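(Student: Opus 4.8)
The plan is to lift $\mathcal{L}$ to the projective closure, apply the known classification of projective Cameron-Liebler $k$-sets of parameter $1$, and then discard the exceptional case using the affine hypothesis. First I would apply Theorem \ref{CLkSpaceBasic}: since $\mathcal{L}$ is a Cameron-Liebler $k$-set of $\AG(n,q)$ with parameter $x=1$, the same set of $k$-spaces (whose projective characteristic vector is $\chi_{\mathcal{L}}$ padded with zeros on the coordinates indexed by the $k$-spaces in $\pi_\infty$) is a Cameron-Liebler $k$-set of parameter $x=1$ in $\PG(n,q)$. The crucial structural fact to retain is that, viewed projectively, $\mathcal{L}$ contains \emph{no} $k$-space at infinity.

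Next I would invoke Theorem \ref{CharX=1PG(n,q)}, which states that for $n\geq 2k+1$ a projective Cameron-Liebler $k$-set of parameter $1$ is either the set of all $k$-spaces through a fixed point $P$, or else $n=2k+1$ and it is the set of all $k$-spaces inside some hyperplane $H$ of $\PG(2k+1,q)$. It then remains to analyze both alternatives under the constraint that $\mathcal{L}$ meets $\pi_\infty$ in no $k$-space.

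In the first alternative I argue that $P$ must be an affine point: if $P\in\pi_\infty$, then because $k\leq n-1$ there is a $k$-space through $P$ lying inside $\pi_\infty$, and this $k$-space would belong to $\mathcal{L}$ while being at infinity, contradicting the structural fact above. Once $P$ is affine, every $k$-space through $P$ automatically avoids $\pi_\infty$, so $\mathcal{L}$ is precisely the set of all $k$-spaces through the affine point $P$, which is the desired conclusion. In the second alternative ($n=2k+1$, all $k$-spaces in a hyperplane $H$) I would eliminate it entirely: if $H=\pi_\infty$ then $\mathcal{L}$ consists only of $k$-spaces at infinity, forcing $\mathcal{L}=\emptyset$, which is impossible for $x=1$; and if $H\neq\pi_\infty$ then $H\cap\pi_\infty$ is an $(n-2)$-space with $n-2=2k-1\geq k$, hence it contains a $k$-space, again producing a $k$-space of $\mathcal{L}$ at infinity. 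Alternatively, and perhaps more cleanly, one observes that the affine part of this set is a nonempty family of $k$-spaces contained in the affine hyperplane $H\setminus\pi_\infty$, which is ruled out directly by Lemma \ref{x=1Inn-1}.

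The argument is short because the heavy lifting is delegated to the projective classification; the only genuine work is the case analysis. I expect the main obstacle to be verifying that the exceptional hyperplane case of Theorem \ref{CharX=1PG(n,q)} truly cannot survive the passage to the affine setting, so I would carefully check the dimension bookkeeping there (that $H\cap\pi_\infty$ is large enough to contain a $k$-space exactly when $n=2k+1$, and that $k\leq n-1$ in the point case) to ensure the contradiction with ``$\mathcal{L}$ has no $k$-space at infinity'' is airtight.
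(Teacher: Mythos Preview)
Your proposal is correct and follows essentially the same route as the paper: lift via Theorem \ref{CLkSpaceBasic}, apply the projective classification Theorem \ref{CharX=1PG(n,q)}, and eliminate the hyperplane case using Lemma \ref{x=1Inn-1}. The paper's proof is extremely terse (three sentences), and your write-up simply makes explicit the case analysis that the paper leaves to the reader, in particular the observation that the fixed point $P$ cannot lie in $\pi_\infty$ because the lifted set contains no $k$-space at infinity.
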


\begin{proof}
Using Theorem \ref{CLkSpaceBasic}, we obtain that every Cameron-Liebler $k$-set in $\AG(n,q)$ is a Cameron-Liebler $k$-set in $\PG(n,q)$. The latter will have the same parameter $x=1$. From Theorem \ref{CharX=1PG(n,q)} and Theorem \ref{x=1Inn-1}, the assertion follows.
\end{proof}
We also will be able to improve this result for $n\geq k+2$, see Corollary \ref{Un1}.

\subsection{Cameron-Liebler line classes of parameter $x=2$ in $\AG(n,q)$}
In this section, our goal will be to exclude the parameter $x=2$ for Cameron-Liebler line classes in $\AG(n,q)$, with $n\geq 3$. To do this, we will need the following lemma.
\begin{Le}\label{LinesIntersectionx=2}
	Consider an affine Cameron-Liebler line class $\mathcal{L}$ with parameter $x=2$ in $\AG(n,q)$, with $n\geq 4$. Then for every two points $p_1$ and $p_2$ in $\pi_\infty$, there are two lines of $\mathcal{L}$ through each of them. These $4$ lines generate at most a $3$-space.
\end{Le}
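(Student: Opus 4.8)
The first assertion is immediate. By property (4) of Theorem \ref{EquivalencesGeneral}, through every point at infinity there pass exactly $x=2$ lines of $\mathcal{L}$; in particular there are two lines of $\mathcal{L}$ through $p_1$, say $\ell_1,m_1$, and two through $p_2$, say $\ell_2,m_2$. It remains to bound the span of these four lines, and here the plan is to produce a single $3$-space $\Sigma$ containing all of them.

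Since $\ell_1\neq m_1$ both pass through $p_1$, they span a projective plane $\pi_1=\langle \ell_1,m_1\rangle$. The natural candidate is $\Sigma=\langle \pi_1,p_2\rangle$: when $p_2\notin\pi_1$ this is a $3$-space, and it already contains $\ell_1,m_1$ and the point $p_2$. I would then restrict $\mathcal{L}$ to the affine $3$-space $\Sigma_A$. Since $\mathcal{L}$ satisfies property (2) of Theorem \ref{EquivalencesGeneral}, and $\dim\Sigma=3\geq\max\{k+1,3\}$ for $k=1$, Theorem \ref{SubspaceProp2} shows that $\mathcal{L}\cap[\Sigma_A]_1$ satisfies property (2) inside $\Sigma_A\cong\AG(3,q)$; hence by Theorem \ref{EquivalencesGeneral} (applied with $n=3=2\cdot 1+1$) it is a Cameron-Liebler line class of some parameter $x'$, for which property (4) holds in $\Sigma_A$.

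The key step is to identify $x'$ and to exploit property (4) at the two infinity points $p_1,p_2\in\Sigma\cap\pi_\infty$. Through $p_1$ the restricted class contains $\ell_1,m_1$, and it can contain no more of them, since any of its lines through $p_1$ is a global line of $\mathcal{L}$ through $p_1$ and there are only two of those; hence property (4) in $\Sigma_A$ forces $x'=2$. Applying property (4) now at $p_2$, the restricted class has exactly $x'=2$ lines through $p_2$, and each of these is a line of $\mathcal{L}$ through $p_2$, so they are precisely $\ell_2,m_2$. In particular $\ell_2,m_2\subseteq\Sigma$, and all four lines lie in the $3$-space $\Sigma$.

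Finally I would dispose of the degenerate case $p_2\in\pi_1$, where $\langle\pi_1,p_2\rangle=\pi_1$ is only a plane. Here I would run the same restriction argument with an \emph{arbitrary} $3$-space $\Sigma'\supseteq\pi_1$: exactly as above property (4) gives $x'=2$ and forces $\ell_2,m_2\subseteq\Sigma'$. Since this holds for every $3$-space through $\pi_1$, and for $n\geq 4$ the intersection of all such $3$-spaces is $\pi_1$ itself, we conclude $\ell_2,m_2\subseteq\pi_1$, so the four lines even span a plane. The main technical points to watch are verifying that the restricted family is a genuine Cameron-Liebler line class of positive integer parameter (so that Theorem \ref{EquivalencesGeneral} applies to it) and correctly controlling $\dim\Sigma$ in the two cases; the geometric heart of the argument is the observation that property (4) inside the $3$-space simultaneously pins down the two lines through $p_1$ and the two through $p_2$.
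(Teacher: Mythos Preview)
Your argument is correct and is a genuinely different proof from the one in the paper. The paper argues by contradiction: assuming three of the four lines span a $4$-space, it explicitly constructs a type~III spread containing those three lines (by carefully choosing an $(n-2)$-space $\widetilde{\pi}\subseteq\pi_\infty$), yielding $|\mathcal{L}\cap\mathcal{S}|\geq 3$; it then still needs a four-case analysis to pass from ``any three lines span at most a $3$-space'' to ``all four do''. Your approach instead fixes the $3$-space $\Sigma=\langle \ell_1,m_1,p_2\rangle$ in advance and uses Theorem~\ref{SubspaceProp2} together with Theorem~\ref{EquivalencesGeneral} to force both lines through $p_2$ to lie in $\Sigma$. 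This is cleaner: no explicit type~III construction and no case split (your ``degenerate'' case $p_2\in\pi_1$ is handled uniformly by the same idea). Note that you can shortcut even further: once Theorem~\ref{SubspaceProp2} gives that $\mathcal{L}\cap[\Sigma_A]_1$ meets every spread of $\Sigma_A$ in a constant $c$, the type~II spread through $p_1$ already shows $c=2$, and the type~II spread through $p_2$ then directly forces $\ell_2,m_2\subseteq\Sigma$; you do not actually need to invoke property~(4) or even identify the restriction as a Cameron--Liebler class. The only point to make explicit is that $c$ is a positive integer before citing Theorem~\ref{EquivalencesGeneral} (its hypothesis asks for $|\mathcal{L}|=x\gauss{n}{k}_q$ with $x$ a positive integer); this follows immediately from the type~II spread through $p_1$, which gives $c=2$ prior to any appeal to that theorem.
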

\begin{proof}
	Denote the lines of $\mathcal{L}$ through $p_1$ by $\ell_1$ and $\ell_2$, and denote the lines of $\mathcal{L}$ through $p_2$ by $r_1$ and $r_2$.
	\begin{figure}[h!]
		\includegraphics[scale=0.5]{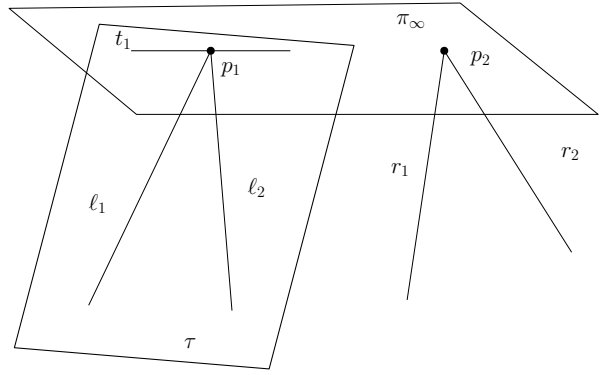}
		\centering
		\caption{Sketch for the proof of Lemma \ref{LinesIntersectionx=2}}\label{FigLinesIntersect}
		\centering
	\end{figure}
	We start by considering $\langle \ell_1,\ell_2,r_1 \rangle$, then we know that $\dim(\langle \ell_1,\ell_2,r_1 \rangle) \in \{2,3,4\}$. Suppose now that $\dim(\langle \ell_1,\ell_2,r_1 \rangle)=4$. If we now call the plane $\langle \ell_1,\ell_2\rangle=\tau$, then we know that 
	$\dim(\tau\cap \pi_\infty)=1.$
	This intersection line we call $t_1$, see Figure \ref{FigLinesIntersect}. Now we can find a $(n-2)$-space $\widetilde{\pi}\subseteq \pi_\infty$ such that 
	$$\langle t_1, p_2\rangle \subseteq \widetilde{\pi}$$
	and $$\langle \ell_1,\ell_2,r_1 \rangle\cap \pi_\infty \not \subseteq \widetilde{\pi}.$$
	This is possible, since first $\langle \ell_1,\ell_2,r_1 \rangle\cap \pi_\infty$ is a $3$-dimensional space that contains $\langle t_1, p_2\rangle$ as a $2$-dimensional subspace. And, secondly, since $n\geq 4$, we know that $n-2 \geq 2$.

 Thus with the identity of Grassmann, we obtain that
	$$\dim(\langle \widetilde{\pi}, \tau \rangle)=n-2+ 2-\dim( \widetilde{\pi}\cap \tau )=n-1,$$
$$\dim(\langle \widetilde{\pi},r_1\rangle)=n-2+1-\dim(\widetilde{\pi}\cap r_1)=n-1$$
	and
	\begin{equation*}
		\begin{split}
		\dim(\langle \ell_1, \ell_2, r_1, \widetilde{\pi} \rangle) &= \dim(\langle \ell_1,\ell_2, r_1 \rangle)+ \dim(\widetilde{\pi})- \dim(\langle \ell_1,\ell_2, r_1 \rangle \cap \widetilde{\pi})\\
&= \dim(\langle \ell_1,\ell_2, r_1 \rangle)+ \dim(\widetilde{\pi})- \dim((\langle \ell_1,\ell_2, r_1 \rangle\cap \pi_\infty) \cap \widetilde{\pi})\\
		&=4+(n-2)-2=n.
		\end{split}
	\end{equation*}
    Thus we can conclude that $\langle \widetilde{\pi}, \ell_1,\ell_2 \rangle \not= \langle\widetilde{\pi}, r_1 \rangle$. So we can define a line spread $\mathcal{S}$ of AG($n,q$) (of type\footnote{This exists due to $n\geq 4$.} III) that contains $\ell_1,\ell_2$ and $r_1$, such that $|\mathcal{L} \cap \mathcal{S}|\geq 3$, which is a contradiction. Thus from this we can conclude that 
	$\dim (\langle \ell_1,\ell_2,r_1\rangle)\leq 3.$ Analogously, we can obtain that $\langle \ell_1,\ell_2,r_2\rangle$ and in general every space generated by three of these four lines is at most a $3$-dimensional space. To show that these four lines span at most a $3$-space, we need to consider some cases.
	\begin{enumerate}
		\item First if $p_2 \not \in t_1$, then $\langle \ell_1,\ell_2,r_2\rangle$ intersects $\langle \ell_1,\ell_2,r_1\rangle$ in at least the point $p_2$ and the plane $\tau$. So, since $p_2\not \in \tau$, both $3$-spaces are the same. Hence, from now on, we assume that $p_2 \in t_1$.
		\item If $r_1$ and/or $r_2$ are contained in $\tau$, we are done, since these four lines span a plane or a $3$-space.
		\item If $r_1$ and $r_2$ are not contained in $\tau$ and $\langle r_1, r_2 \rangle \cap \tau= t_1$, then again we can conclude that all four lines lie in a $3$-space.
		\item If $r_1$ and $r_2$ are not contained in $\tau$ and $\langle r_1, r_2 \rangle \cap \tau\not= t_1$. Then $\langle r_1,r_2 \rangle\cap \pi_\infty\cap t_1=p_2$ and we analogously obtain from previous cases that  $\langle r_1,r_2,\ell_2\rangle$ and $\langle r_1,r_2,\ell_1\rangle$ are two $3$-spaces that now contain $p_1 \not\in \langle r_1,r_2 \rangle$ and $\langle r_1,r_2 \rangle$.
	\end{enumerate}
	 This proves the lemma.
\end{proof}

Let us now state the following known theorem.
\begin{St}[Folklore]\label{Folklore}
	Consider a set of $k$-spaces $\mathcal{E}$ in $\PG(n,q)$, $1\leq k\leq n-1$, such that every two $k$-spaces intersect in a $(k-1)$-space. Then $\mathcal{E}$ consists out of a subset of all the $k$-spaces through a fixed $(k-1)$-space or all the $k$-spaces inside a $(k+1)$-space.
\end{St}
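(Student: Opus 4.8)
The plan is to reduce the whole statement to the interaction of three members and then to bootstrap a local classification into a global dichotomy. First I would dispose of the trivial case $|\mathcal{E}|\leq 2$, where both conclusions hold at once. So assume $|\mathcal{E}|\geq 3$ and fix two distinct members $A,B\in\mathcal{E}$. By hypothesis $M:=A\cap B$ is a $(k-1)$-space, and by the Grassmann identity $\Sigma:=\langle A,B\rangle$ is a $(k+1)$-space, using $k+1\leq n$. These two objects $M$ and $\Sigma$ are exactly the candidate fixed $(k-1)$-space and fixed $(k+1)$-space of the conclusion.

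Next I would classify an arbitrary third member $C\in\mathcal{E}$ by comparing its intersections with $A$ and $B$. Both $C\cap A$ and $C\cap B$ are hyperplanes of the $k$-space $C$, so there are only two possibilities. If $C\cap A=C\cap B$, then this common $(k-1)$-space lies in $A\cap B=M$ and hence equals $M$, so $M\subseteq C$; call $C$ of \emph{pencil type}. If $C\cap A\neq C\cap B$, then these are two distinct hyperplanes of $C$, so they span $C$; since both lie in $\Sigma$, we get $C=\langle C\cap A,\,C\cap B\rangle\subseteq\Sigma$; call $C$ of \emph{book type}. Thus every member of $\mathcal{E}$ is of pencil type or of book type, while $A$ and $B$ are of both types.

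The key step, and the one I expect to be the main obstacle, is excluding a genuine mixture. Suppose $\mathcal{E}$ contained a member $C$ of pencil type that is \emph{not} in $\Sigma$, together with a member $D$ of book type that does \emph{not} contain $M$. Since $M\subseteq C\cap\Sigma$ but $C\not\subseteq\Sigma$, a dimension count forces the exact identity $C\cap\Sigma=M$. Then $D\cap C\subseteq\Sigma\cap C=M$, yet $D\cap C$ is a $(k-1)$-space of the same dimension as $M$, so $D\cap C=M$ and therefore $M\subseteq D$, contradicting the choice of $D$. Isolating the correct quantity to control, namely $C\cap\Sigma=M$, is the crux; once it is in hand the contradiction is immediate.

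Having ruled out mixtures, I conclude that either no member is of pencil type only, in which case every member lies in $\Sigma$ and we obtain the $(k+1)$-space conclusion, or no member is of book type only, in which case every member contains $M$ and we obtain the fixed $(k-1)$-space conclusion. This establishes the claimed dichotomy.
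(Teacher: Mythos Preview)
Your argument is correct. The paper does not supply a proof of this statement; it is labelled \emph{Folklore} and simply invoked later in the proof of Theorem~\ref{ConclParam2}. Your approach---fixing two members $A,B$ to pin down the candidate $(k-1)$-space $M=A\cap B$ and $(k+1)$-space $\Sigma=\langle A,B\rangle$, classifying every further member as \emph{pencil type} or \emph{book type} according to whether $C\cap A=C\cap B$ or not, and then excluding a genuine mixture via the dimension identity $C\cap\Sigma=M$---is precisely the standard proof of this classical fact, and each step is sound.
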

We are ready to state the main theorem. This theorem proves a second part of Theorem \ref{Smallx} in the introduction.
\begin{St}\label{ConclParam2}
	There does not exist a Cameron-Liebler line class $\mathcal{L}$ of parameter $x=2$ in $\AG(n,q)$, $n \geq 3$.
\end{St}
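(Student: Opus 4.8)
The plan is to exploit that, by Theorem \ref{EquivalencesGeneral}(4), an affine Cameron-Liebler line class with $x=2$ has \emph{exactly two} lines through every point at infinity, so that $|\mathcal{L}|=2\frac{q^n-1}{q-1}$ and every point at infinity occurs as the direction of some line of $\mathcal{L}$. Since Lemma \ref{LinesIntersectionx=2} requires $n\geq 4$, I would first dispose of the case $n=3$ by quoting the companion paper \cite{DMSS}, and then assume $n\geq 4$ throughout. For each point $p\in\pi_\infty$, let $\ell_1^p,\ell_2^p$ be the two lines of $\mathcal{L}$ with direction $p$; these are distinct parallel affine lines, so they span a plane $\tau_p$, and $\tau_p\cap\pi_\infty$ is a line through $p$.

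The key reduction is to translate Lemma \ref{LinesIntersectionx=2} into a statement about the planes $\tau_p$. For distinct $p,p'$ the four lines $\ell_1^p,\ell_2^p,\ell_1^{p'},\ell_2^{p'}$ span at most a $3$-space, so $\dim\langle\tau_p,\tau_{p'}\rangle\leq 3$; as $\tau_p,\tau_{p'}$ are planes, Grassmann's identity forces $\dim(\tau_p\cap\tau_{p'})\geq 1$. Hence any two \emph{distinct} members of $\{\tau_p\}$ meet in exactly a line, and Theorem \ref{Folklore} (with $k=2$) applies: either (A) all $\tau_p$ lie in a fixed $3$-space $\Sigma$, or (B) all $\tau_p$ pass through a fixed line $m$.

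Case (A), and the easy part of Case (B), I would settle by a dimension count at infinity. In Case (A) every point at infinity, being a direction of $\mathcal{L}$, lies in $\Sigma\cap\pi_\infty$; since $\Sigma\not\subseteq\pi_\infty$ this intersection is a plane, so $\frac{q^n-1}{q-1}\leq q^2+q+1$, which is impossible for $n\geq 4$. If in Case (B) the line $m$ is contained in $\pi_\infty$, then $m=\tau_p\cap\pi_\infty$ for every $p$, forcing every point at infinity onto $m$ and giving $\frac{q^n-1}{q-1}\leq q+1$, again impossible.

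The remaining case, (B) with $m$ an affine line, is where I expect the real work. Here each $\tau_p$ contains the fixed affine line $m$, so every line of $\mathcal{L}$ whose direction $p$ differs from $m^\infty:=m\cap\pi_\infty$ is coplanar with $m$ and therefore meets $m$ in an affine point; moreover, being parallel, the two lines $\ell_1^p,\ell_2^p$ meet $m$ in two \emph{distinct} points. Thus, apart from the two lines of direction $m^\infty$, all of $\mathcal{L}$ is carried by the $q$ affine points of $m$. To reach a contradiction I would exhibit a line spread meeting $\mathcal{L}$ in at least three lines: choose three directions whose selected lines hit $m$ in three distinct points (so that they are pairwise disjoint) and place them, via the type III construction of Lemma \ref{SpreadsAG(n,q)}, into a common spread, exactly as in the proof of Lemma \ref{LinesIntersectionx=2} but now with three pairwise-skew lines rather than two. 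An alternative finish is to restrict $\mathcal{L}$ to a $3$-dimensional subspace through $m$ by Theorem \ref{SubspaceProp2} and invoke the $n=3$ classification of \cite{DMSS}. The main obstacle is precisely this last case: verifying that the three lines can be chosen pairwise disjoint and \emph{simultaneously} completed to a spread, since the very constraint that all lines meet the single line $m$ is what makes the spread construction delicate.
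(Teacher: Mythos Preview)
Your overall architecture matches the paper's proof exactly: quote \cite{DMSS} for $n=3$, build the planes $\tau_p$, invoke Theorem \ref{Folklore} to get the dichotomy, and dispose of case (A) and of case (B) with $m\subseteq\pi_\infty$ by a direction count. (The paper handles case (A) via Lemma \ref{x=1Inn-1} instead of your direction count, but both work for $n\ge 4$.)

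The divergence is only in the last case, (B) with $m$ affine. The paper's argument here is cleaner than either of your finishes: it uses property (4) of Theorem \ref{EquivalencesAG(n,q)lines} directly. Pick a line $\ell'\in\mathcal{L}$ through $s:=m\cap\pi_\infty$ with $\ell'\neq m$, and simply \emph{count} the lines of $\mathcal{L}$ affinely disjoint from $\ell'$. Since every plane $\tau_p$ passes through $m$, the lines of $\mathcal{L}$ disjoint from $\ell'$ are the $2q$ lines in each of the $\frac{q^{n-1}-1}{q-1}-1$ planes $\tau_p\neq\langle m,\ell'\rangle$, plus the one other line through $s$; this gives $2q^2\frac{q^{n-2}-1}{q-1}+1$, which contradicts \eqref{EqLinesEq} with $x=2$, $\chi_{\mathcal{L}}(\ell')=1$. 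No spread construction is needed.

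Your approach (a) has the obstacle you flag, and it is genuine: to place three lines meeting $m$ at three distinct points into a type III spread you must choose $\pi_{n-2}\subset\pi_\infty$ containing their three directions but \emph{avoiding} $m^\infty$ (otherwise all three lines land in the same hyperplane through $\pi_{n-2}$); for $n=4$ this already forces the three directions to span a plane missing $m^\infty$, and for $q=2$ there are only two affine hyperplanes through any $\pi_{n-2}$, so three lines with distinct directions cannot be separated at all. Your approach (b) does work, but you must check that the restriction to a $3$-space $\tau\supseteq m$ still has parameter $2$: this holds because for every $I\in\tau\cap\pi_\infty$ with $I\neq m^\infty$ one has $\tau_I=\langle m,I\rangle\subseteq\tau$, so both lines of $\mathcal{L}$ through $I$ lie in $\tau$; choosing $\tau\supseteq\tau_{m^\infty}$ handles $I=m^\infty$ as well.
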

\begin{proof}
The case for $n=3$ is proven in \cite[Corollary 4.5]{DMSS}, so we may suppose that $n\geq 4$.\\
Suppose there exists a Cameron-Liebler line class $\mathcal{L}$ of parameter $x=2$. Then we can define $\mathcal{E}$ as the set of planes, such that each plane is defined by a point at infinity and the two corresponding lines of $\mathcal{L}$ through this point. Due to Lemma \ref{LinesIntersectionx=2} we know that these planes pairwise intersect in a line or coincide.
Using Theorem \ref{Folklore}, we can conclude that $\mathcal{E}$ consists out of a subset of all the planes through a fixed line or all the planes in a $3$-space $\sigma$. If $\mathcal{E}$ would consist out of all the planes in a $3$-space $\sigma$, then $\mathcal{L}$ is a set of lines inside $\sigma$ and thus inside a certain hyperplane. This is a contradiction with Lemma \ref{x=1Inn-1}. So we conclude that $\mathcal{E}$ consists out of planes through a fixed line $\ell$.

 If $\ell$ would be a line at infinity then $|\mathcal{L}|=2(q+1)$, since every point at infinity belongs to two lines of $\mathcal{L}$. Note that for $n\geq 3$ this number is strictly smaller than the size of a Cameron-Liebler line class in AG($n,q$) of parameter $x=2$, which has size $2\frac{q^n-1}{q-1}$. So the line $\ell$ should be affine. See Figure \ref{LastPosLinesF}.
\begin{figure}[h!]
	\includegraphics[scale=0.5]{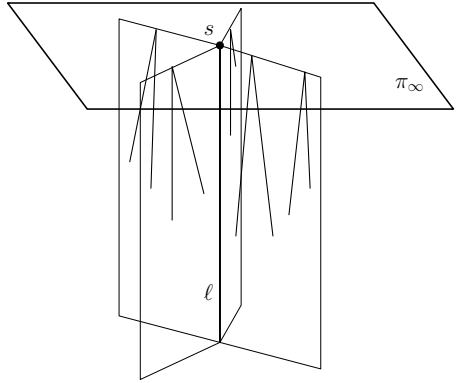}
	\centering
	\caption{The last possible option in AG($n,q$).}
	\label{LastPosLinesF}
	\centering
\end{figure}
%

Note that $|\mathcal{E}|= \frac{q^{n-1}-1}{q-1}$, since every point at infinity will define exactly one plane in $\mathcal{E}$ by definition. Hence this is also the number of all planes through a line, such that we know that $\mathcal{E}$ consists out of all the planes through $\ell$. Let us denote $s= \ell \cap \pi_\infty$.
Now we can use Theorem \ref{EquivalencesAG(n,q)lines}, where we have shown that being an affine Cameron-Liebler line class in $\AG(n,q)$ is equivalent with the following statement.
	For every affine line $\ell_1$, the number of affine lines in $\mathcal{L}$  disjoint to $\ell_1$ in $\AG(n,q)$ is equal to \begin{equation}\label{HEq} \left(q^2\frac{q^{n-2}-1}{q-1}+1\right)(x-\chi_{\mathcal{L}}(\ell_1)
	).\end{equation} Consider now an affine line $\ell'$ through $s$ that is contained in $\mathcal{L}$ and not equal to the intersection line $\ell$. Note that this is always possible, since $s$ belongs to exactly two lines of $\mathcal{L}$. Then all lines of $\mathcal{L}$ except those in the plane $\langle \ell,\ell'\rangle$, are disjoint to $\ell'$. Since every other plane through $\ell$ has $2q$ lines of $\mathcal{L}$ skew to $\ell'$ and we also need to count the other line of $\mathcal{L}$ through $s$,
this number is equal to 
	$$(|\mathcal{E}|-1)\cdot 2q+1.$$
With some calculations, we find that this equals
	$$2q^2\left( \frac{q^{n-2}-1}{q-1}\right) +1.$$
	This number should be equal to Equation (\ref{HEq}). In this equation we fill in $\chi_\cL(\ell')=1$, and we obtain that there should be  $\left(q^2 \frac{q^{n-2}-1}{q-1}+1\right) (2-1
	)$ lines disjoint to $\ell'\in \mathcal{L}$. These two numbers are not equal. So there does not exist Cameron-Liebler line classes with parameter $x=2$ in $\AG(n,q)$, $n\geq 4$ either.
\end{proof}


\begin{Opm}\label{CharacInAG}
In contrast with the case for $x=1$, we have proven the preceding theorem without using Theorem \ref{CLkSpaceBasic}. The reason for this choice was that we are not able to deduce the preceding theorem from results in $\PG(n,q)$. Reducing to the projective case and using Theorem \ref{Non-ExBD1}, we observe that there do not exist Cameron-Liebler $k$-sets of parameter $x=2$ in $\AG(n,q)$ if $q\in \{2,3,4,5\}$ together with $n\geq 5$ or $n=4$, but $q=2$. 

While using Theorem \ref{JozefienClassific}, we obtain a non-existence result for $n\geq 5$ and $q\geq 3$. Both statements combined are weaker than the preceding theorem. This seems a small difference, but for general $k$ the results are even stronger. This will be proven in Corollary \ref{No2}.
\end{Opm}

\subsection{Characterisation of the parameter $x$ of Cameron-Liebler $k$-sets in AG($n,q$)}

Our goal here is to prove that there do not exist Cameron-Liebler $k$-sets in $\AG(n,q)$ of parameter $x=2$, with $n\geq k+2$. As we already briefly discussed, we can not deduce this result so far by using Theorem \ref{CLkSpaceBasic} and thus reducing to the projective case. If we would reduce to $\PG(n,q)$ and use Theorem \ref{Non-ExBD1} and Theorem \ref{JozefienClassific}, we would obtain a non-existence result for (1) $n\geq 3k+2$ and $q\geq 3$, (2)  $n\geq 5$ with $q\in \{2,3,4,5\}$ and (3) $n=4$ with $q=2$.  Note that this statement is significantly weaker than the previous claim. In achieving this goal we will also obtain a minor non-existence condition on the parameters of certain Cameron-Liebler $k$-sets in AG($n,q$). We will do so by connecting every Cameron-Liebler $k$-set to a Cameron-Liebler line class of the same parameter. For this we will need the following observation.
\begin{Le}\label{AtInftyThrough}
	Let $\mathcal{L}$ be a Cameron-Liebler $k$-set with parameter $x$ in $\AG(n,q)$. Then the number of elements of \(\mathcal{L}\) through a fixed \(i\)-space at infinity, for $-1 \leq i \leq k-2$,  is equal to 
	\[\begin{bmatrix}
	n-i-1 \\
	k-i-1
	\end{bmatrix}_q x.\]
\end{Le}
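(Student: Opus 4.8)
The plan is to prove this by a single double count of incident pairs, using the type II spreads as the key device. First I would recall that every affine $k$-space $K$ carries a well-defined \emph{direction}, namely the $(k-1)$-space $\overline{K}\cap\pi_\infty$ obtained by intersecting its projective closure with the hyperplane at infinity; since $K$ is affine, $\overline{K}\not\subseteq\pi_\infty$, so this intersection is genuinely $(k-1)$-dimensional. In this language, an affine $k$-space passes through a fixed $i$-space $\sigma\subseteq\pi_\infty$ exactly when $\sigma$ is contained in its direction, and the type II spread attached to a $(k-1)$-space $\tau\subseteq\pi_\infty$ is precisely the set of affine $k$-spaces of direction $\tau$. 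By Lemma~\ref{SizeAffCLkSet}, each such spread meets $\cL$ in exactly $x$ elements, so exactly $x$ members of $\cL$ have any prescribed direction $\tau$.

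Next I would fix an $i$-space $\sigma\subseteq\pi_\infty$ with $-1\le i\le k-2$ and count the pairs $(K,\tau)$ where $\tau$ is a $(k-1)$-space of $\pi_\infty$ containing $\sigma$ and $K\in\cL$ has direction $\tau$. Counting by $\tau$ first: the number of $(k-1)$-spaces of $\pi_\infty\cong\PG(n-1,q)$ through $\sigma$ equals, by passing to the quotient geometry $\PG(n-i-2,q)$, the Gaussian coefficient $\gauss{n-i-1}{k-i-1}_q$; and each such $\tau$ is the direction of exactly $x$ elements of $\cL$, giving $\gauss{n-i-1}{k-i-1}_q\, x$ pairs in total. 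Counting by $K$ first: a member $K\in\cL$ occurs in a pair if and only if its (unique) direction contains $\sigma$, that is, exactly when $K$ passes through $\sigma$, and then it is paired with precisely one $\tau$. Hence the number of pairs also equals the number of elements of $\cL$ through $\sigma$, and equating the two counts yields the claimed value.

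A useful sanity check is that the argument is uniform across the whole range, including the degenerate endpoint $i=-1$ (where $\sigma$ is empty, every $K$ passes through it, and the formula correctly returns $|\cL|=\gauss{n}{k}_q\, x$) and the implicit endpoint $i=k-1$ (where the formula returns $x$, recovering the type II spread count we started from). The proof needs no induction, and the answer comes out automatically independent of the chosen $\sigma$. I expect no genuine obstacle here; the only points demanding care are bookkeeping ones: correctly identifying the type II spread with a ``constant-direction'' family so that Lemma~\ref{SizeAffCLkSet} applies, and making sure the quotient-space count of $(k-1)$-spaces through $\sigma$ lands on $\gauss{n-i-1}{k-i-1}_q$ in the paper's indexing convention for Gaussian coefficients.
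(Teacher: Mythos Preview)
Your proof is correct and is essentially the same argument as the paper's: both partition the elements of $\cL$ through the fixed $i$-space according to their direction $(k-1)$-space in $\pi_\infty$, use the type~II spread property (Lemma~\ref{SizeAffCLkSet}) to get exactly $x$ elements per direction, and multiply by the $\gauss{n-i-1}{k-i-1}_q$ directions through the $i$-space. The paper phrases this tersely as a multiplication while you make the double count explicit, but the content is identical.
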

\begin{proof}
Consider an $i$-space $I$ at infinity. Then we can count all the elements of $\mathcal{L}$ through $I$, by counting the number of $(k-1)$-spaces through $I$ inside $\pi_\infty$ and multiplying this by the number of elements of $\mathcal{L}$ through each $(k-1)$-space.  Both numbers are known, since through every $(k-1)$-space at infinity there are, by Lemma \ref{2to8}, in total $x$ elements of $\mathcal{L}$.
The assertion follows
\end{proof}
A remarkable observation is that $\gauss{n-i-1}{k-i-1}_q x$ equals the size of a Cameron-Liebler $(k-i-1)$-set in $\AG(n-i-1,q)$. This observation will lead to the following construction.
\begin{Cons}\label{ConstructNew}
Consider $\mathcal{L}$ to be a Cameron-Liebler $k$-set in $\AG(n,q)$ of parameter $x$, and pick an $i$-space $I$ at infinity, for $0\leq i\leq k-2$ and $n\geq k+2$. Then, by Lemma \ref{AtInftyThrough}, there are \[\begin{bmatrix}
	n-i-1 \\
	k-i-1
	\end{bmatrix}_q x\]
$k$-spaces of $\mathcal{L}$ that contain $I$. Pick now an $(n-i-1)$-space $\pi$ in $\PG(n,q)$ skew to $I$. Then every $k$-space of $\mathcal{L}$ through $I$ will intersect $\pi$ in a $(k-i-1)$-space in $\PG(n,q)$, see Figure 5. 
\begin{figure}[h!]
\includegraphics[scale=0.4]{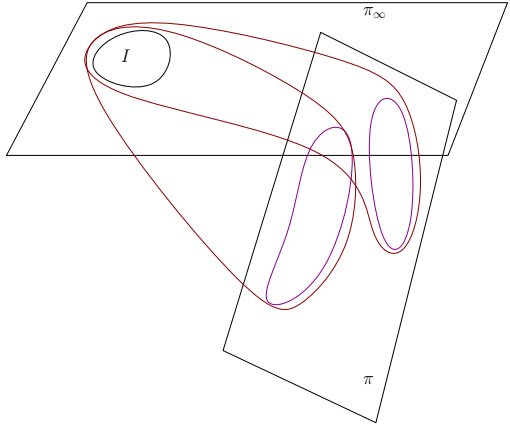}
\centering
\caption{The red elements are $k$-spaces in $\mathcal{L}$ through $I$ and we get the purple $(k-i-1)$-spaces after the projection onto the  $(n-i-1)$-space $\pi$.}
\centering
\end{figure}
We shall denote this set of $(k-i-1)$-spaces by $\mathcal{J}$. Remark that $\pi$ is in fact a projective space of dimension $n-i-1$, which is an affine space $\pi_A$ ($\simeq \AG(n-i-1,q)$) of the same dimension with hyperplane at infinity equal to $\pi\cap\pi_\infty$.

\end{Cons}

We first give a result on $k$-spreads.
\begin{Le}\label{SpreadMagic}
Consider Construction \ref{ConstructNew}. Then every $(k-i-1)$-spread in $\pi_A$  can be extended to an affine $k$-spread in $\AG(n,q)$, such that all the $k$-spaces in this $k$-spread contain $I$.
\end{Le}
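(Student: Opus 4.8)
I want to show that every $(k-i-1)$-spread of the affine space $\pi_A \cong \AG(n-i-1,q)$ lifts, via the projection from $I$, to a $k$-spread of $\AG(n,q)$ whose members all contain the fixed $i$-space $I$ at infinity. The natural map is the ``cone'' operation: given a $(k-i-1)$-space $M$ of $\pi_A$, form $\langle I, M\rangle$. Since $I$ is an $i$-space and $M$ is a $(k-i-1)$-space with $\pi$ skew to $I$, a Grassmann dimension count gives $\dim\langle I, M\rangle = i + (k-i-1) + 1 = k$, so $\langle I, M\rangle$ is a $k$-space. Because $M$ is an affine $(k-i-1)$-space of $\pi_A$ (its points at infinity lie in $\pi\cap\pi_\infty$, not all of $M$), the cone $\langle I, M\rangle$ is not contained in $\pi_\infty$, hence is an affine $k$-space of $\AG(n,q)$ containing $I$. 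I would define $\mathcal{S}' := \{\langle I, M\rangle \mid M \in \mathcal{S}\}$ for a given $(k-i-1)$-spread $\mathcal{S}$ of $\pi_A$.

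**Key steps.** First I would verify that the cones are pairwise projectively disjoint in the affine sense, i.e.\ share no affine point. The intersection $\langle I, M\rangle \cap \langle I, M'\rangle$ always contains $I$, which lies entirely at infinity; I must check it contains no affine point. Since $\pi$ is skew to $I$ and every affine point of a cone $\langle I, M\rangle$ projects from $I$ onto a unique point of $M \subseteq \pi$, two distinct cones meeting in an affine point would force their projections $M, M'$ to meet in a point of $\pi$, contradicting that $\mathcal{S}$ is a (partial) spread in $\pi_A$. Second, I would check the covering property: every affine point $p$ of $\AG(n,q)$ lies in exactly one cone. Projecting $p$ from $I$ onto $\pi$ yields a well-defined point $p'$ of $\pi_A$ (well-defined because $\langle I, p\rangle$ meets the skew space $\pi$ in exactly one point, and $p'$ is affine since $p$ is); as $\mathcal{S}$ partitions $\pi_A$, there is a unique $M \in \mathcal{S}$ with $p' \in M$, and then $p \in \langle I, M\rangle$. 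Counting then confirms $|\mathcal{S}'| = |\mathcal{S}| = q^{(n-i-1)-(k-i-1)} = q^{n-k}$, the correct size of an affine $k$-spread by the Remark following Lemma \ref{SpreadsAG(n,q)}.

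**Main obstacle.** The delicate point is the projection map itself: I must argue cleanly that projection from the $i$-space $I$ onto the skew $(n-i-1)$-space $\pi$ is a well-defined bijection on affine points that sends affine points to affine points, and that it interacts correctly with the hyperplane at infinity $\pi\cap\pi_\infty$ of $\pi_A$. Concretely, the span $\langle I, p\rangle$ is an $(i+1)$-space meeting the complementary $\pi$ in a single point by Grassmann's identity, and this point is affine precisely because $\langle I, p\rangle \not\subseteq \pi_\infty$ (it contains the affine point $p$) while $I \subseteq \pi_\infty$. Once this projection correspondence is established, the disjointness and covering both reduce directly to the corresponding properties of the spread $\mathcal{S}$ in $\pi_A$, so the remainder is routine. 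I would close by noting that each $\langle I, M\rangle$ contains $I$ by construction, giving the final claim.
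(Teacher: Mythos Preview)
Your proposal is correct and follows exactly the same approach as the paper: both define the extension via the cone map $M \mapsto \langle I, M\rangle$. The paper's proof is a single sentence asserting that $\{\langle I, N\rangle \mid N \in \mathcal{S}'\}$ is a $k$-spread, whereas you actually carry out the verifications (dimension count, affine disjointness via projection, covering, size) that the paper leaves implicit.
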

\begin{proof}
Let $\mathcal{S}'$ be a $(k-i-1)$-spread in $\pi_A$, then  
$$\mathcal{S}:=\{ \langle I, N \rangle \mid N \in \mathcal{S}'\}$$
is a $k$-spread in AG($n,q$).
\end{proof}

\begin{St}\label{CLkSetReduction}
Suppose that $n\geq 2k-i$. Consider Construction \ref{ConstructNew}. Then the set $\mathcal{J}$ is a Cameron-Liebler $(k-i-1)$-spaces in $\pi_A$ ($\simeq \AG(n-i-1,q)$), which has the same parameter $x$.
\end{St}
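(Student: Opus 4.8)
The plan is to show that the projected family $\mathcal{J}$ of $(k-i-1)$-spaces in the affine space $\pi_A \simeq \AG(n-i-1,q)$ is a Cameron-Liebler $(k-i-1)$-set of parameter $x$. By Theorem \ref{EquivalencesGeneral}, which applies since the hypothesis $n \geq 2k-i$ is exactly $(n-i-1) \geq 2(k-i-1)+1$, it suffices to verify property (2): every $(k-i-1)$-spread $\mathcal{S}'$ of $\pi_A$ meets $\mathcal{J}$ in exactly $x$ elements. The whole proof is therefore a single spread-intersection count, transported through the projection from $I$.

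\textbf{Main step: transporting the spread count.} First I would fix a $(k-i-1)$-spread $\mathcal{S}'$ of $\pi_A$ and apply Lemma \ref{SpreadMagic} to extend it to an affine $k$-spread $\mathcal{S} = \{\langle I, N\rangle \mid N \in \mathcal{S}'\}$ of $\AG(n,q)$, all of whose members contain $I$. Since $\mathcal{L}$ is a Cameron-Liebler $k$-set, Lemma \ref{SizeAffCLkSet} gives $|\mathcal{L} \cap \mathcal{S}| = x$. The crux is then the claim that the projection $\langle I, N\rangle \mapsto \langle I, N\rangle \cap \pi$ sets up a bijection between $\{K \in \mathcal{L} : K \supseteq I\}$ and $\mathcal{J}$ under which membership in $\mathcal{S}$ corresponds to membership in $\mathcal{S}'$. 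Concretely, every member of $\mathcal{S}$ already contains $I$, so $\mathcal{L} \cap \mathcal{S}$ consists entirely of $k$-spaces of $\mathcal{L}$ through $I$; each such $K = \langle I, N\rangle$ projects to $K \cap \pi = N \in \mathcal{S}'$, and $K \in \mathcal{J}$'s preimage exactly when $N \in \mathcal{J}$. Thus $|\mathcal{J} \cap \mathcal{S}'| = |\mathcal{L} \cap \mathcal{S}| = x$, independent of the chosen spread $\mathcal{S}'$.

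\textbf{The obstacle to nail down.} The step requiring genuine care is that the projection from $I$ onto $\pi$ is injective on the $k$-spaces of $\mathcal{L}$ through $I$, i.e.\ that two distinct such $k$-spaces cannot share the same $(k-i-1)$-dimensional trace on $\pi$. This holds because a $k$-space $K \supseteq I$ with $\dim I = i$ is recovered from $I$ together with its trace $K \cap \pi$ via $K = \langle I, K\cap \pi\rangle$, using that $\pi$ is skew to $I$ so the dimensions add correctly ($\dim\langle I, K\cap\pi\rangle = i + (k-i-1) + 1 = k$). I would also confirm that the trace $K \cap \pi$ genuinely lies in the \emph{affine} part $\pi_A$ for $K \in \mathcal{L}$: since $K \notin \pi_\infty$ and $\pi \cap \pi_\infty$ is the hyperplane at infinity of $\pi_A$, the trace is not contained in $\pi\cap\pi_\infty$, so it is an affine $(k-i-1)$-space of $\pi_A$. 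This is the point where one must be slightly careful that every element of $\mathcal{J}$ is a legitimate affine subspace of $\pi_A$, rather than something lying at infinity.

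\textbf{Conclusion.} Having established the bijection and the invariance $|\mathcal{J} \cap \mathcal{S}'| = x$ for all $(k-i-1)$-spreads $\mathcal{S}'$ of $\pi_A$, property (2) of Theorem \ref{EquivalencesGeneral} holds for $\mathcal{J}$ in $\pi_A$. Invoking that theorem (legitimate because $n-i-1 \geq 2(k-i-1)+1$) yields that $\mathcal{J}$ is a Cameron-Liebler $(k-i-1)$-set of $\pi_A$, and since its common spread-intersection number is $x$, its parameter is $x$ by definition. This completes the reduction.
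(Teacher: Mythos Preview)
Your proposal is correct and follows essentially the same approach as the paper: extend an arbitrary $(k-i-1)$-spread of $\pi_A$ to a $k$-spread of $\AG(n,q)$ through $I$ via Lemma \ref{SpreadMagic}, use the Cameron-Liebler property of $\mathcal{L}$ to get intersection size $x$, transfer this count through the projection bijection, and conclude via Theorem \ref{EquivalencesGeneral}. Your additional care in verifying that the projection is injective on $k$-spaces through $I$ and that the traces are genuinely affine in $\pi_A$ fills in details the paper leaves implicit.
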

\begin{proof}
Consider $\mathcal{L}$ and $\mathcal{J}$ as in Construction \ref{ConstructNew}, then we need to prove that $\mathcal{J}$ is a Cameron-Liebler $(k-i-1)$-set with the same parameter $x$ in $\pi_A$. We know that, due to Lemma \ref{SpreadMagic}, every $(k-i-1)$-spread $\mathcal{S}'$ in $\pi_A$ can be extended to a $k$-spread $\mathcal{S}$ in $\AG(n,q)$ such that every element of $\mathcal{S}$ contains $I$. Since $\mathcal{L}$ is a Cameron-Liebler $k$-set in $\AG(n,q)$, it holds that $|\mathcal{L}\cap\mathcal{S}|=x$. But since every $k$-space of $\mathcal{L}\cap\mathcal{S}$ contains $I$, it projects to an element of $\mathcal{J}\cap\mathcal{S}'$ and vice versa. So we have that $|\mathcal{J}\cap\mathcal{S}'|=x$. Using Theorem \ref{EquivalencesGeneral}, which gives the condition on $n$, we have proven the theorem.
\end{proof}
\begin{Opm}
Note that this construction cannot be done in a similar way for $\PG(n,q)$.
\end{Opm}
We now have found a way to reduce Cameron-Liebler $k$-sets to Cameron-Liebler line classes of the same parameter $x$. Hence, this will lead to a transfer of non-existence results.
\begin{St}\label{ToLines}
Let $\mathcal{L}$ be a Cameron-Liebler $k$-set in AG($n,q$), with $n\geq k+2$. Suppose now that $\mathcal{L}$ has parameter $x$, then $x$ satisfies every condition which holds for Cameron-Liebler line classes in AG($n-k+1,q$).
\end{St}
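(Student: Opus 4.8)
The plan is to invoke the reduction machinery already assembled in Construction \ref{ConstructNew} and Theorem \ref{CLkSetReduction}, specialized so that the projected family is a set of \emph{lines}. The key observation is that in Construction \ref{ConstructNew} we are free to choose the dimension $i$ of the subspace $I$ at infinity, and the projected family $\mathcal{J}$ consists of $(k-i-1)$-spaces; so to land on line classes we simply take $i$ as large as the construction allows.

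First I would dispose of the degenerate case $k=1$: then $n-k+1=n$, and $\mathcal{L}$ is itself a Cameron-Liebler line class in $\AG(n,q)$, so the statement holds tautologically. For $k\geq 2$, I would set $i:=k-2$, which satisfies $0\leq i\leq k-2$, and pick an arbitrary $(k-2)$-space $I$ in $\pi_\infty$ (which exists, since $k-2\leq n-1$). The next step is to check that the standing hypothesis $n\geq k+2$ is exactly the hypothesis $n\geq 2k-i$ required by Theorem \ref{CLkSetReduction}, as $2k-i=2k-(k-2)=k+2$. Applying Construction \ref{ConstructNew} and Theorem \ref{CLkSetReduction} with this value of $i$, the family $\mathcal{J}$ is a Cameron-Liebler $(k-i-1)$-set in $\pi_A\simeq\AG(n-i-1,q)$ of parameter $x$. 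Since $k-i-1=1$ and $n-i-1=n-k+1$, this is precisely a Cameron-Liebler line class in $\AG(n-k+1,q)$ with parameter $x$.

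To conclude, because $x$ is realized as the parameter of a genuine Cameron-Liebler line class in $\AG(n-k+1,q)$, it must satisfy every numerical and existence condition that such line classes satisfy, which is the assertion. I do not expect a serious obstacle here: the substantive work is already carried out by Theorem \ref{CLkSetReduction} (which rests on Lemma \ref{SpreadMagic} and Theorem \ref{EquivalencesGeneral}). The only genuine point requiring care is the index bookkeeping, namely verifying that the single choice $i=k-2$ simultaneously forces $k-i-1=1$, so that $\mathcal{J}$ is a line class, and converts the dimension condition $n\geq 2k-i$ into the stated bound $n\geq k+2$.
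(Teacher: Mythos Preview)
Your proposal is correct and follows essentially the same approach as the paper: apply Theorem \ref{CLkSetReduction} with $i=k-2$, check that the hypothesis $n\geq 2k-i$ becomes $n\geq k+2$, and conclude that the projected family is a Cameron-Liebler line class in $\AG(n-k+1,q)$ with parameter $x$. Your explicit treatment of the case $k=1$ is a small but genuine improvement, since Construction \ref{ConstructNew} formally requires $0\leq i\leq k-2$ and hence does not apply when $k=1$; the paper's proof silently assumes $k\geq 2$.
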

\begin{proof}
We can use Theorem \ref{CLkSetReduction} for $i=k-2$, thus $n\geq k+2$, and obtain that there exists a Cameron-Liebler line class in $\AG(n-(k-2)-1,q)$ with the same parameter $x$. This proves the theorem.
\end{proof}
This theorem has the following consequences.
\begin{St}
	Suppose that \(\mathcal{L}\) is a Cameron-Liebler $(n-2)$-set with parameter \(x\) of $\AG(n,q)$, then it holds that
	\begin{equation*}
	\frac{x(x-1)}{2} \equiv 0 \mod (q+1).
	\end{equation*}
\end{St}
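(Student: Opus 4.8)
The plan is to reduce to Cameron-Liebler line classes in $\AG(3,q)$, where the stated divisibility is a known modular condition, and then transfer it back through Theorem \ref{ToLines}.

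First I would specialize the general reduction machinery to $k = n-2$. With this choice the hypothesis $n \geq k+2$ of Theorem \ref{ToLines} holds (with equality, since $k+2 = n$), and the affine space appearing there is $\AG(n-k+1,q) = \AG(3,q)$. Concretely the reduction is realized by Theorem \ref{CLkSetReduction} with $i = k-2$: here $2k-i = k+2 = n$, so the requirement $n \geq 2k-i$ is satisfied, and projecting from a fixed $(k-2)$-space at infinity onto a skew $(n-i-1)$-dimensional (affine) space $\pi_A$ produces a Cameron-Liebler line class $\mathcal{J}$ in $\pi_A \cong \AG(3,q)$ carrying the same parameter $x$.

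Second, I would invoke the modular condition for Cameron-Liebler line classes in $\AG(3,q)$ proved in \cite{DMSS}: every such class of parameter $x$ satisfies $\frac{x(x-1)}{2} \equiv 0 \pmod{q+1}$. Because this is a condition on the parameter of an affine line class in $\AG(3,q)$, Theorem \ref{ToLines} guarantees that the parameter $x$ of our original $(n-2)$-set $\mathcal{L}$ inherits it verbatim, which is exactly the claim.

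The point requiring the most attention is the boundary regime $n = k+2$, where all the dimension inequalities underlying Theorems \ref{CLkSetReduction} and \ref{ToLines} are tight; one must check that the projected line class $\mathcal{J}$ genuinely carries the \emph{unchanged} parameter $x$, so that the congruence transfers without rescaling. All the genuine arithmetic content sits in the $\AG(3,q)$ result of \cite{DMSS}; given that, the present statement is a purely formal corollary of the reduction.
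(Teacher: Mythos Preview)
Your proposal is correct and follows essentially the same route as the paper: apply Theorem \ref{ToLines} with $k=n-2$ to reduce to a Cameron-Liebler line class of the same parameter $x$ in $\AG(3,q)$, and then invoke the modular equality for that setting from \cite[Corollary 4.3]{DMSS} (the paper also cites \cite[Theorem 1.1]{MetschAndGavrilyuk} as an alternative source). Your extra paragraph verifying that the boundary case $n=k+2$ still falls within the hypotheses of Theorems \ref{CLkSetReduction} and \ref{ToLines}, and that the parameter $x$ is preserved unchanged under the projection, is a useful sanity check but adds nothing beyond what those theorems already guarantee.
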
 
\begin{proof}
Use Theorem \ref{ToLines} for $n-2=k$ and the modular equality from  \cite[Corollary 4.3]{DMSS} or \cite[Theorem 1.1]{MetschAndGavrilyuk}.
\end{proof}
The following corollary completes the proof of $x=2$ in Theorem \ref{Smallx} in the introduction.
\begin{Gev}\label{No2}
There do not exist Cameron-Liebler $k$-sets in $\AG(n,q)$ with parameter $x=2$, with $n\geq k+2$.
\end{Gev}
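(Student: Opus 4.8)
The plan is to deduce Corollary~\ref{No2} directly from the reduction machinery already established, specifically Theorem~\ref{ToLines}, together with the non-existence of Cameron-Liebler line classes of parameter $x=2$ proved in Theorem~\ref{ConclParam2}. The key observation is that Theorem~\ref{ToLines} states that if $\mathcal{L}$ is a Cameron-Liebler $k$-set in $\AG(n,q)$ with $n \geq k+2$ and parameter $x$, then $x$ must satisfy every condition that holds for Cameron-Liebler line classes in $\AG(n-k+1,q)$. Thus the strategy is to transfer the obstruction from the line case up to general $k$.

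First I would invoke Theorem~\ref{ToLines} with parameter $x=2$: assuming for contradiction that a Cameron-Liebler $k$-set $\mathcal{L}$ of parameter $x=2$ exists in $\AG(n,q)$ with $n \geq k+2$, the theorem produces a Cameron-Liebler line class of the same parameter $x=2$ in $\AG(n-k+1,q)$. Here I must check that the ambient dimension of this line class is admissible for Theorem~\ref{ConclParam2}, which requires $n' \geq 3$ where $n' = n-k+1$. Since $n \geq k+2$ gives $n-k+1 \geq 3$, this condition is satisfied, so Theorem~\ref{ConclParam2} applies and asserts that no such line class of parameter $x=2$ exists in $\AG(n-k+1,q)$. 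This is the desired contradiction, completing the proof.

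I expect the main obstacle to be purely one of bookkeeping on the dimension inequalities rather than any genuine mathematical difficulty, since the heavy lifting is done by the earlier results. The one point that warrants care is ensuring that the hypotheses of Theorem~\ref{CLkSetReduction}, on which Theorem~\ref{ToLines} rests, are met when specializing to $i=k-2$: that result additionally requires $n \geq 2k-i = 2k-(k-2) = k+2$, which coincides exactly with our hypothesis $n \geq k+2$, so the reduction is valid. Hence the corollary follows immediately by combining Theorem~\ref{ToLines} with Theorem~\ref{ConclParam2}, with no further computation needed.
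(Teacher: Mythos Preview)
Your proposal is correct and follows essentially the same approach as the paper: assume a Cameron-Liebler $k$-set with $x=2$ exists, apply Theorem~\ref{ToLines} to obtain a Cameron-Liebler line class with $x=2$ in $\AG(n-k+1,q)$, note $n-k+1\geq 3$, and invoke Theorem~\ref{ConclParam2} for the contradiction. Your additional verification that the hypothesis $n\geq 2k-i$ of Theorem~\ref{CLkSetReduction} specializes to $n\geq k+2$ at $i=k-2$ is a nice sanity check the paper leaves implicit.
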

\begin{proof}
If there would exist a Cameron-Liebler $k$-set $\mathcal{L}$ of parameter $x=2$ in $\AG(n,q)$, we can use Theorem \ref{ToLines} and obtain that there exists a Cameron-Liebler line class in $\AG(n-k+1,q)$ with parameter $x=2$. Since $n-k+1\geq 3$, we may use Theorem \ref{ConclParam2} to obtain a contradiction.
\end{proof}

We also have the following improvement of Theorem \ref{Chap6Charac1} To conclude Theorem \ref{Smallx}.
\begin{Gev}\label{Un1}
The only Cameron-Liebler $k$-set $\cL$ of parameter $x=1$ in $\AG(n,q)$, with $n\geq k+2$, consists of the set of $k$-spaces through a fixed point.
\end{Gev}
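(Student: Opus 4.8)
The plan is to reduce Corollary \ref{Un1} to the already-established characterization in Theorem \ref{Chap6Charac1}, exactly as the authors reduced the $x=2$ non-existence result (Corollary \ref{No2}) to the line-class case. The key tool is Theorem \ref{ToLines}: a Cameron-Liebler $k$-set $\cL$ of parameter $x$ in $\AG(n,q)$ with $n \geq k+2$ forces the existence of a Cameron-Liebler line class of the same parameter $x$ in $\AG(n-k+1,q)$, via the projection Construction \ref{ConstructNew} applied with $i = k-2$. Since $x=1$, the passage to lines would give a Cameron-Liebler line class of parameter $x=1$ in $\AG(n-k+1,q)$, and here the constraint $n \geq k+2$ yields $n-k+1 \geq 3$, so the reduced space is genuinely a line geometry we understand.

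First I would invoke Theorem \ref{Chap6Charac1} (or more precisely the structural information driving it) to pin down what a parameter-$1$ Cameron-Liebler $k$-set can look like. The statement of Theorem \ref{Chap6Charac1} already classifies parameter-$1$ sets when $n \geq 2k+1$, concluding that $\cL$ must consist of all $k$-spaces through a fixed affine point. The content of Corollary \ref{Un1} is that this same conclusion holds under the weaker hypothesis $n \geq k+2$. So the real work is to remove the gap between $n \geq 2k+1$ and $n \geq k+2$. I would handle this by the reduction: apply Theorem \ref{CLkSetReduction} with $i=k-2$ to obtain a Cameron-Liebler line class $\mathcal{J}$ of parameter $x=1$ in $\pi_A \cong \AG(n-k+1,q)$, whose structure is governed by the line-class classification; the fixed affine point through which all lines of $\mathcal{J}$ pass then lifts, through the cone construction over the $(k-2)$-space $I$ at infinity, to a fixed structure for $\cL$.

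The main obstacle I anticipate is bookkeeping the lifting step: Theorem \ref{CLkSetReduction} requires $n \geq 2k-i = k+2$ when $i=k-2$, which matches the hypothesis, but the projection only describes $\cL$ locally, namely the $k$-spaces through one fixed $(k-2)$-space $I$ at infinity. To conclude globally that $\cL$ is the point-pencil through a single affine point, I would need to run this argument over varying choices of $I$ (or combine it with a counting argument using Lemma \ref{AtInftyThrough}, which already forces exactly $x \gauss{n-i-1}{k-i-1}_q$ elements through each $i$-space at infinity) and verify the local pieces are mutually consistent, i.e.\ that the affine points detected for different $I$ coincide. Alternatively, and more cleanly, one should simply cite Theorem \ref{Chap6Charac1} together with the reduction to handle the range $k+2 \leq n < 2k+1$: the line class in $\AG(n-k+1,q)$ has parameter $1$, and by the line-case result it is a point-pencil, which is incompatible with $\cL$ being anything other than a point-pencil in the original space.

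Concretely, I would structure the proof as follows: assume $\cL$ is a parameter-$1$ Cameron-Liebler $k$-set in $\AG(n,q)$ with $k+2 \leq n$; if $n \geq 2k+1$ invoke Theorem \ref{Chap6Charac1} directly; otherwise apply the reduction of Theorem \ref{ToLines}/\ref{CLkSetReduction} to reach $\AG(n-k+1,q)$ with $3 \leq n-k+1 \leq k+1$, use the known classification of parameter-$1$ Cameron-Liebler line classes there to identify a fixed point, and lift this via Construction \ref{ConstructNew} back to the unique fixed affine point determining $\cL$. The delicate point to verify carefully is that the two regimes patch together and that the lifted point is uniquely determined independent of the auxiliary $(k-2)$-space $I$ and the skew $(n-k+1)$-space $\pi$ chosen in the construction.
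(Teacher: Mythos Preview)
Your proposal is correct and follows essentially the same route as the paper: reduce via Theorem \ref{ToLines}/\ref{CLkSetReduction} with $i=k-2$ to a Cameron-Liebler line class of parameter $1$ in $\AG(n-k+1,q)$, apply Theorem \ref{Chap6Charac1} in the line case, and lift the resulting point-pencil back through Construction \ref{ConstructNew}. Two minor remarks: your case split $n\geq 2k+1$ versus $k+2\leq n<2k+1$ is unnecessary, since $n\geq k+2$ already gives $n-k+1\geq 3=2\cdot 1+1$, so the reduction plus the $k=1$ instance of Theorem \ref{Chap6Charac1} covers all cases uniformly (this is exactly what the paper does); and your caution about the lifting step, namely that one must check the affine point obtained is independent of the auxiliary $(k-2)$-space $I$, is well placed, as the paper dispatches this with ``it is easy to see'' without spelling out the connectivity/varying-$I$ argument you outline.
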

\begin{proof}
Again we use Theorem \ref{ToLines} to obtain a Cameron-Liebler line class $\cL'$ of parameter $x=1$ in $\AG(n-k+1,q)$. But combined this with Theorem \ref{Chap6Charac1} for $k=1$ in $\AG(n-k+1,q)$, we obtain that $\cL'$ consists out of all lines through a fixed point. Using Construction \ref{ConstructNew}, it is easy to see that $\cL$ is the set of all $k$-spaces through a point.
\end{proof}

\section{Cameron-Liebler sets of hyperplanes in $\AG(n,q)$}\label{sec:hyp}
In this section we study Cameron-Liebler sets of hyperplanes in $\AG(n,q)$. We will be able to give a complete classification. This will be done by giving a classification of affine $(n-1)$-spreads.
\begin{Le}\label{Hyperplanespreads}
The only $(n-1)$-spreads in $\AG(n,q)$ are spreads of type II.
\end{Le}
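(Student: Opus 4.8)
The plan is to work in the projective closure $\PG(n,q)$ and to track how each member of the spread meets the hyperplane at infinity $\pi_\infty$. Let $\mathcal{S}$ be an arbitrary $(n-1)$-spread of $\AG(n,q)$; recall that such a spread has exactly $q^{n-(n-1)}=q$ members, each of which is an affine hyperplane. First I would observe that the projective closure $\overline{H}$ of each affine hyperplane $H\in\mathcal{S}$ is a projective hyperplane distinct from $\pi_\infty$, so that $K_H:=\overline{H}\cap\pi_\infty$ is a well-defined $(n-2)$-space of $\pi_\infty$.

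The key step is to show that all of these $(n-2)$-spaces coincide. Take two distinct members $H_1,H_2\in\mathcal{S}$. Since they are affinely disjoint, their closures share no affine point, so the projective $(n-2)$-space $\overline{H_1}\cap\overline{H_2}$ (the intersection of two distinct projective hyperplanes) is contained in $\pi_\infty$. Then $\overline{H_1}\cap\overline{H_2}\subseteq\overline{H_1}\cap\pi_\infty=K_{H_1}$, and since both $\overline{H_1}\cap\overline{H_2}$ and $K_{H_1}$ are $(n-2)$-dimensional, they are equal; the same reasoning gives $\overline{H_1}\cap\overline{H_2}=K_{H_2}$, whence $K_{H_1}=K_{H_2}$. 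Thus there is a single $(n-2)$-space $K\subseteq\pi_\infty$ with $K_H=K$ for every $H\in\mathcal{S}$, so every member of $\mathcal{S}$ is an affine hyperplane through $K$.

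To finish, I would count: in $\PG(n,q)$ there are exactly $q+1$ hyperplanes through the $(n-2)$-space $K$, one of which is $\pi_\infty$, leaving exactly $q$ affine hyperplanes through $K$. Since $\mathcal{S}$ is contained in the set of affine hyperplanes through $K$ and $|\mathcal{S}|=q$, equality holds, so $\mathcal{S}$ is precisely the set of all affine hyperplanes through $K$, which is a spread of type II as defined in Lemma \ref{SpreadsAG(n,q)}.

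The argument is essentially a single dimension count, so I expect no serious obstacle. The one point to be careful about is the justification that affine disjointness forces $\overline{H_1}\cap\overline{H_2}\subseteq\pi_\infty$ and then the correct use of the dimension equality to upgrade the inclusion $\overline{H_1}\cap\overline{H_2}\subseteq K_{H_1}$ to an identity, so that one genuinely obtains $K_{H_1}=K_{H_2}$ rather than a mere containment. The concluding cardinality check is what promotes ``$\mathcal{S}$ lies inside a type II family'' to ``$\mathcal{S}$ equals a type II spread.''
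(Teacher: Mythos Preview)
Your argument is correct and follows essentially the same route as the paper: pass to the projective closure, observe that any two members of $\mathcal{S}$ meet in an $(n-2)$-space that must lie in $\pi_\infty$, and use the fact that each affine hyperplane has a unique $(n-2)$-dimensional part at infinity to force a common $K$. Your write-up is in fact more careful than the paper's, since you explicitly include the closing cardinality check ($|\mathcal{S}|=q$ equals the number of affine hyperplanes through $K$) that upgrades containment to equality.
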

\begin{proof}
Let $\mathcal{S}$ be an $(n-1)$-spread, then we need to prove that $\mathcal{L}$ is of type II. Consider now the projective closure $\PG(n,q)$, then we know that every two hyperplanes of $\mathcal{S}$ will intersect in an $(n-2)$-space. Due to the fact that $\mathcal{S}$ is an affine $(n-1)$-spread, these intersections must lie at infinity. But since every affine hyperplane only has an $(n-2)$-dimensional intersection with infinity, all these $(n-2)$-spaces need to be the same.
Hence, we have that $\mathcal{S}$ is of type II.
\end{proof}
The fact that we are able to classify all $(n-1)$-spreads in $\AG(n,q)$, gives us information how we can construct these Cameron-Liebler sets of hyperplanes in $\AG(n,q)$.
\begin{St}
Let $\mathcal{L}$ be a  set of affine hyperplanes  in $\AG(n,q)$ and consider the projective closure $\PG(n,q)$. Then $\mathcal{L}$ is a Cameron-Liebler set of hyperplanes of parameter $x$ if and only if $\mathcal{L}$ is a set of hyperplanes such that through every $(n-2)$-space at infinity we have chosen $x$ arbitrary hyperplanes.
\end{St}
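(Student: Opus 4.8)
The plan is to translate the problem into a statement about the kernel of the affine point–hyperplane incidence matrix, and then determine that kernel exactly. First I would record the geometric dictionary: every affine hyperplane $H$ extends to a projective hyperplane whose trace $H\cap\pi_\infty$ is an $(n-2)$-space at infinity, and two affine hyperplanes share this trace precisely when they are parallel. Hence the affine hyperplanes through a fixed $(n-2)$-space $K\subseteq\pi_\infty$ form a parallel class of exactly $q$ hyperplanes partitioning the affine point set; by Lemma \ref{Hyperplanespreads} these parallel classes (the type II spreads) are the \emph{only} $(n-1)$-spreads of $\AG(n,q)$. Writing $m=\gauss{n}{n-1}_q=\frac{q^n-1}{q-1}$ for the number of such $K$, and $e_K$ for the characteristic vector of the parallel class of $K$, the relation $A_ne_K=\mathbf{1}$ (the all-ones vector) holds for every $K$, so each difference $e_K-e_{K'}$ lies in $\ker(A_n)$.

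The forward implication is then immediate: if $\mathcal{L}$ is a Cameron-Liebler set of hyperplanes with parameter $x$, then Lemma \ref{ConstInt} gives $|\mathcal{L}\cap\mathcal{S}|=x$ for every $(n-1)$-spread $\mathcal{S}$, and since these spreads are exactly the parallel classes, precisely $x$ hyperplanes of $\mathcal{L}$ pass through each $(n-2)$-space at infinity.

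The substance is the converse, for which I would show that the vectors $e_K-e_{K'}$ already span all of $\ker(A_n)$. The key step, and the main obstacle, is to prove that the affine point–hyperplane incidence matrix $A_n$ has full row rank $q^n$ over $\mathbb{C}$. I would do this by a Fourier argument on the additive group $(\mathbb{F}_q^n,+)$: if a weighting $w$ of the affine points satisfies $\sum_{p\in H}w_p=0$ for every affine hyperplane $H$, then for each nonzero $a\in\mathbb{F}_q^n$ and a fixed nontrivial additive character $\psi$ of $\mathbb{F}_q$,
\[
\widehat{w}(a)=\sum_{p} w_p\,\psi(a\cdot p)=\sum_{b\in\mathbb{F}_q}\psi(b)\sum_{p:\,a\cdot p=b} w_p=0,
\]
since each inner sum is the $w$-sum over a hyperplane; the same computation with any fixed nonzero $a$ forces $\widehat{w}(0)=\sum_p w_p=0$ as well, so $\widehat{w}\equiv 0$ and hence $w=0$. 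Therefore $\text{rank}(A_n)=q^n$ and $\dim\ker(A_n)=qm-q^n=m-1$. As the $e_K$ have pairwise disjoint supports they are linearly independent, so $\{e_K-e_{K'}\}$ spans an $(m-1)$-dimensional subspace of $\ker(A_n)$, which by the dimension count must be all of $\ker(A_n)$.

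Finally I would assemble the equivalence. A set $\mathcal{L}$ of affine hyperplanes is Cameron-Liebler exactly when $\chi_{\mathcal{L}}\in\text{Im}(A_n^T)=(\ker(A_n))^\perp$, i.e. $\chi_{\mathcal{L}}\cdot(e_K-e_{K'})=0$ for all $K,K'$, i.e. $|\mathcal{L}\cap(\text{parallel class of }K)|$ is independent of $K$. Calling this common value $x$ yields $|\mathcal{L}|=mx=\gauss{n}{n-1}_q\,x$, so $x$ is indeed the parameter, and the condition says exactly that through every $(n-2)$-space at infinity one has chosen $x$ hyperplanes. This is the desired characterization.
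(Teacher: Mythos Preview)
Your proof is correct and complete, but it follows a genuinely different route from the paper's own argument. The paper proves this theorem by setting up the $2$-class association scheme on affine hyperplanes (identity, parallel, intersecting), computing the eigenvalue and dual eigenvalue matrices $P$ and $Q$, and then using inner distributions together with Delsarte's Theorem \ref{Conclusiontheorem0} to show that point-pencils span $V_0\perp V_2$ and type~II spreads span $V_0\perp V_1$; the characterization then drops out of the eigenspace decomposition. Your argument bypasses the association-scheme machinery entirely: you compute $\ker(A_n)$ directly by a character-theoretic rank argument, showing it is exactly the span of the differences $e_K-e_{K'}$ of parallel-class indicators, and read off the equivalence from $(\ker A_n)^\perp=\text{Im}(A_n^T)$.

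What each approach buys: the paper's method is uniform with its treatment of lines in Section~\ref{sec:assoc} and slots the hyperplane case into the same algebraic framework, at the cost of invoking the heavier Theorems~\ref{InnerDSpan} and~\ref{Conclusiontheorem0} and of asserting without detail that point-pencils span $V_0\perp V_2$ (which in turn needs the full row rank of the point--hyperplane incidence matrix, something the paper only states for lines in Theorem~\ref{FullRankIncidence}). Your Fourier argument is more elementary and entirely self-contained; in particular it supplies that full-rank statement directly, and the dimension count $\dim\ker(A_n)=qm-q^n=m-1$ is clean. Both are valid; yours is shorter and avoids citing the Bose--Mesner theory, while the paper's keeps the narrative consistent with the line case.
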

\begin{proof}
The proof is similar as we have done for lines in Section 5. Hence, we find a $2$-class association scheme $\Delta=(\Phi_{n-1}, \mathcal{R})$, with $\mathcal{R}:=\{\mathcal{R}_0, \mathcal{R}_1, \mathcal{R}_2\}$. Here $\mathcal{R}_0$ is the identical relation, while $\mathcal{R}_1$ is the relation that denotes that the two hyperplanes are disjoint and thus intersect in an $(n-2)$-space at infinity and $\mathcal{R}_2$ is the relation that denotes intersection.  Using similar techniques as in Section 5, we obtain that 
$$P=\begin{pmatrix} 1 &  q-1 &\frac{q^{n+1}-1}{q-1}  \\  1 & q-1 & -1\\ 1 & -1 & 0 \\\end{pmatrix} \text{ and } Q=\begin{pmatrix} 1  &\frac{q^{n}-q}{q-1}&  q^{n}-1\\
1  &\frac{q^{n}-q}{q-1}&  -\frac{q^{n}-1}{q-1}\\
1 & -1 & 0\end{pmatrix}.$$
Due to Lemma \ref{Hyperplanespreads}, we have that the inner distribution of an $(n-2)$-spread is equal to the vector $v=(1,q-1,0)$. Also point-pencils have inner distribution $w=(1, 0, \frac{q^n-1}{q-1}-1)$. Using Theorem \ref{InnerDSpan}, we know that the $(n-2)$-spreads lie inside $V_0\perp V_1$ and the point-pencils lie inside $V_0\perp V_2$, in fact both sets will span these spaces. Combining this with Theorem \ref{Conclusiontheorem0}, we find that a Cameron-Liebler $(n-2)$-set can be characterized by the constant intersection of $(n-2)$-spreads. Hence, due to  Lemma \ref{Hyperplanespreads}, the assertion follows.
\end{proof}

%

\section{Future research} \label{sec:fut}
We want to end this paper with some suggestions for further research. 
 One could also attempt to classify more parameters of a Cameron-Liebler $k$-set in $\AG(n,q)$, since intuitively this will be less difficult than $\PG(n,q)$. We remind the reader of Theorem \ref{CLkSpaceBasic_plus_lines}, which states that Cameron-Liebler $k$-sets in $\AG(n,q)$ are special cases of Cameron-Liebler $k$-sets in $\PG(n,q)$. Another interesting problem  is to look for examples of Cameron-Liebler $k$-sets in $\AG(n,q)$. Note that it would be enough to find a Cameron-Liebler $k$-set in $\PG(n,q)$ that does not contain any $k$-spaces inside a hyperplane, see Theorem \ref{H6ProjToAff}.
\paragraph{Acknowledgement}
The research of Jozefien D'haeseleer and Ferdinand Ihringer is supported by the FWO (Research Foundation Flanders).

The authors thank the referees for their suggestions to improve this article.


\end{document}